\algnewcommand{\IIf}[1]{\State\algorithmicif\ #1\ \algorithmicthen}
\algnewcommand{\EndIIf}{\unskip\ \algorithmicend\ \algorithmicif}
\newtheorem{thm}{Theorem}[section]
\newtheorem{lem}[thm]{Lemma}
\newtheorem{cor}[thm]{Corollary}
\newtheorem{prop}[thm]{Proposition}
\newtheorem{ex}[thm]{Example}
\theoremstyle{definition}
\newtheorem{defn}[thm]{Definition}
\newtheorem{rem}[thm]{Remark}
\newcommand{\Z}{\mathbb{Z}}
\def\Sym{\mathrm{Sym}}
\newcommand{\subjclass}[2][1991]{%
\let\@oldtitle\@title%
\gdef\@title{\@oldtitle\footnotetext{#1 \emph{Mathematics subject classification.} #2}}%
}
\newcommand{\keywords}[1]{%
\let\@@oldtitle\@title%
\gdef\@title{\@@oldtitle\footnotetext{\emph{Keywords and phrases.} #1.}}%
}
\title{Sequencings in Semidirect Products via the Polynomial Method}
\date{}
\author[1]{S.  Costa}
\author[2]{S.  Della Fiore}
\author[3]{M. A. Ollis}
\affil[1]{DICATAM, Sez.~Matematica, Universit\`a degli Studi di Brescia, \newline Via Branze~43, I~25123 Brescia, Italy}
\affil[2]{DI, Universit\`a degli Studi di Salerno, Via Papa Giovanni Paolo II~132, \newline I~84084 Fisciano, Italy, }
\affil[3]{Emerson College, 120 Boylston Street, Boston, MA 02116, USA}
\subjclass[2010]{05C25, 05C38, 05D40}
\keywords{Combinatorial Nullstellensatz, Sequenceability}
\begin{document}
\maketitle
\begin{abstract}
The {\em partial sums} of a sequence ${\mathbf x} = x_1, x_2, \ldots, x_k$ of distinct non-identity elements of a group~$(G,\cdot)$ are~$s_0 = id_G$ and~$s_j = \prod_{i=1}^j x_i$ for~$0 < j \leq k$.  If the partial sums are all different then~${\mathbf x}$ is a {\em linear sequencing} and if the partial sums are all different when~$|i-j| \leq t$ then~${\mathbf x}$ is a {\em $t$-weak sequencing}.  We investigate these notions of sequenceability in semidirect products using the polynomial method.  We show that every subset of order~$k$ of the non-identity elements of the dihedral group of order~$2m$ has a linear sequencing when~$k \leq 12$ and either~$m>3$ is prime or every prime factor of~$m$ is larger than~$k!$, unless~$s_k$ is unavoidably the identity; that every subset of order~$k$ of a non-abelian group of order three times a prime has a linear sequencing when~$5 < k \leq 10$, unless~$s_k$ is unavoidably the identity; and that if the order of a group is~$pe$ then all sufficiently large subsets of the non-identity elements  are~$t$-weakly sequenceable when~$p>3$ is prime, $e \leq 3$ and $t \leq 6$.
\end{abstract}

\section{Introduction}
Let~$(G,\cdot)$ be a group.  We continue the investigation of when the elements of a subset of~$G$ may be ordered in such a way that their partial sums are distinct.   We set up a general framework for investigating semidirect product groups and look in detail at the dihedral groups of order twice a prime and non-abelian groups of order three times a prime.

\begin{defn}
Let~$S$ be a subset of the non-identity elements of a group~$G$ with~$|S| = k$.  If~$x_1, x_2, \ldots, x_k$ is an ordering of the elements of~$S$, define the $j$th {\em partial sum} of the ordering by~$s_0 = id_G$ and~$s_j = \prod_{i=1}^j x_i$ for~$0 < j \leq k$.  The ordering of the subset~$S$ is a {\em linear sequencing} if the partial sums $s_0,s_1,\dots,s_{k}$ are all different.  It is a {\em cyclic sequencing} if the partial sums are different with the exception that $s_{0}=s_k=id_G$.  If~$S$ has a linear or cyclic sequencing then it is {\em sequenceable}.
\end{defn}

If every subset of the non-identity elements of a group is sequenceable then the group is {\em strongly sequenceable}.  Combining several results and conjectures concering subsets of abelian groups dating back to~1961~\cite{Gordon61}, we have the conjecture that all abelian groups are strongly sequenceable.   See~\cite{AL20,CDOR,PD} for details of the history, motivation and current state of this conjecture. 

The question for non-abelian groups is much more widely open.  In~\cite{CMPP18,Ollis} small counterexamples are presented to show that the  potential conjecture that all non-abelian groups are strongly sequenceable is false.  

{\em Keedwell's Conjecture}~\cite{Keedwell} is that if~$S$ contains all the non-identity elements of a non-abelian group~$G$, then a linear sequencing exists if and only if~$|G| \geq 10$.  This known to hold for many groups, including dihedral groups~\cite{Isbell,Li}, soluble groups with a single involution~\cite{AI}, at least one group of every odd order at which a non-abelian group exists~\cite{OT} and all groups of order at most~255~\cite{OllisSmall}.

Let~$S$ be a subset of size~$k$ of~$G \setminus \{ id_G \}$ whose elements can be ordered so that their total sum is not the identity.  Then~$S$ is known to have a linear sequencing in the following situations~\cite{Ollis}: 
\begin{itemize}
\item $k \leq 4$ and~$S$ does not have one of two specific structures,
\item $G$ is dihedral of order~$2m$ for prime~$m > 3$  and~$k \leq 9$,
\item $G$ is dihedral of order~$2m$ for even or prime $m>4$ and~$k = 2m-2$.
\end{itemize}
In this paper we make two contributions to finding linear sequencings in non-abelian groups.  First, we improve the second item above, showing that it holds up to~$k = 12$ and also for~$m$ whose prime factors are each larger than~$k!$.  (The result that allows for composite~$m$ with large prime factors in fact applies more widely than dihedral groups; see Section~\ref{sec:infinity}.)
The second contribution concerns non-abelian groups of order three times a prime~$p$;  there is at most one such group for any~$p$ and we denote it~$G_{3p}$.  In this case we show that subsets~$S$ of size~$5 < k \leq 10$ are sequenceable and, further, that we can always find a linear sequencing for these subsets when the sum of all the elements is not unavoidably the identity.

In~\cite{CD} a generalization of sequenceability is introduced.

\begin{defn}
Let~$S$ be a subset of the non-identity elements of a group~$G$ with~$|S| = k$ and let~$x_1, x_2, \ldots, x_k$ be an ordering of the elements of~$S$ with partial sums $s_0, s_1, \ldots, s_k$.   If~$s_i \neq s_j$ for any distinct pair of indices~$i,j$ with~$|i-j| \leq t$ then the ordering is a {\em $t$-weak sequencing} of~$S$ and~$S$ is {\em $t$-weakly sequenceable}.
\end{defn}

Observe that a $k$-weak sequencing is exactly a linear sequencing and that a $(k-1)$-weak sequencing is either a linear or cyclic sequencing.  

In~\cite{CD} it is shown that a subset of non-identity elements of the cyclic group~$\Z_p$ of prime order~$p$ is $t$-weak sequenceable when~$t \leq 6$ and, with the additional condition that there are no pairs of the form~$\{x,-x\}$ in the subset, when~$t=7$.  We extend the methodology to apply to groups~$G$ that are not cyclic of prime order, showing that if~$|G| = pe$ then all sufficiently large subsets of~$G \setminus \{ id_G \}$  are~$t$-weakly sequenceable when~$p>3$ is prime, $e \leq 3$ and $t \leq 6$.

In the next section we describe the polynomial method and how it can be implemented for groups that are semidirect products.  In Subsection~\ref{subsec:comp1} and Section~\ref{sec:infinity} we apply the method to obtain the results on sequenceability and in Section~\ref{sec:weak} we apply it to obtain the results on weak sequenceability.

\section{Polynomial Method in Semidirect Products}
In this section, we apply a method that relies on the Non-Vanishing Corollary of the Combinatorial Nullstellensatz, see~\cite{Alon99,Michalek10}. Given a prime $p$ (in the following $p$ will be always assumed to be a prime), this corollary allows us to obtain a nonzero point to suitable polynomials on $\mathbb{Z}_p$ derived starting from the ones defined in \cite{HOS19} and in \cite{Ollis}.
\begin{thm}\label{th:pm}{\rm (Non-Vanishing Corollary)}
Let~$\mathbb{F}$ be a finite field, and let $ f(x_1, x_2, \ldots, x_k)$ be a polynomial in~$\mathbb{F}[x_1, x_2, \ldots, x_k]$. Suppose the degree~$deg(f)$ of~$f$ is $\sum_{i=1}^k \gamma_i$, where each~$\gamma_i$ is a nonnegative integer, and suppose the coefficient of~$\prod_{i=1}^k x_i^{\gamma_i}$ in~$f$ is nonzero. If~$C_1, C_2, \ldots, C_k$ are subsets of~$\mathbb{F}$ with~$|C_i| > \gamma_i$, then there are $c_1 \in C_1, \ldots, c_k \in C_k$ such that~$f(c_1, c_2, \ldots, c_k) \neq 0$.
\end{thm}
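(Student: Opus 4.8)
The plan is to prove this result (it is Alon's Combinatorial Nullstellensatz) by contradiction: I assume that $f$ vanishes at every point of the grid $C_1 \times C_2 \times \cdots \times C_k$ and derive that the coefficient of $\prod_{i=1}^k x_i^{\gamma_i}$ must then be zero, contradicting the hypothesis. Writing $t_i = |C_i| - 1 \ge \gamma_i$, the argument rests on two ingredients that I would isolate first: a grid-vanishing lemma (any polynomial whose degree in each variable $x_i$ is at most $t_i$ and which vanishes on all of the grid is identically zero) and a reduction procedure that replaces $f$ by a polynomial $\bar f$ with the same coefficient of $\prod_{i=1}^k x_i^{\gamma_i}$ but with degree at most $t_i$ in each $x_i$.

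First I would establish the grid-vanishing lemma by induction on the number of variables $k$. The base case $k = 1$ is the classical fact that a nonzero univariate polynomial of degree at most $t_1 < |C_1|$ cannot have $|C_1|$ distinct roots. For the inductive step I would expand $f = \sum_{j=0}^{t_k} f_j(x_1, \ldots, x_{k-1}) \, x_k^{\,j}$; for each fixed $(c_1, \ldots, c_{k-1}) \in C_1 \times \cdots \times C_{k-1}$ the resulting polynomial in $x_k$ has degree at most $t_k < |C_k|$ yet vanishes on all of $C_k$, so it is the zero polynomial and every coefficient $f_j(c_1, \ldots, c_{k-1})$ vanishes. Thus each $f_j$ vanishes on the smaller grid, the inductive hypothesis gives $f_j \equiv 0$, and hence $f \equiv 0$.

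Next I would carry out the reduction. For each $i$ set $g_i(x_i) = \prod_{c \in C_i}(x_i - c)$, a monic polynomial of degree $|C_i|$, and write $g_i = x_i^{|C_i|} - \tilde g_i$ with $\deg \tilde g_i < |C_i|$. Repeatedly substituting $x_i^{|C_i|} \mapsto \tilde g_i$ in any monomial of $x_i$-degree at least $|C_i|$ yields $f = \sum_{i=1}^k h_i g_i + \bar f$, where $\bar f$ has degree at most $t_i$ in each $x_i$. The key observation I would stress is that each substitution strictly lowers the total degree of the monomial it acts on, since $\deg \tilde g_i < |C_i|$; hence no monomial of total degree $\deg(f)$ is ever created during the reduction. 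Because $\prod_{i=1}^k x_i^{\gamma_i}$ has total degree $\deg(f)$ and satisfies $\gamma_i \le t_i < |C_i|$ in every variable, it is neither rewritten nor produced, so its coefficient in $\bar f$ equals its nonzero coefficient in $f$.

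Finally I would combine the ingredients: on the grid every $g_i$ vanishes, so $\bar f$ agrees with $f$ there and therefore vanishes on the whole grid, whence the lemma (applicable because $\deg_{x_i} \bar f \le t_i < |C_i|$) gives $\bar f \equiv 0$ and the coefficient of $\prod_{i=1}^k x_i^{\gamma_i}$ in $\bar f$ is zero, contradicting the previous step. The main obstacle, and the step I would handle most carefully, is the bookkeeping in the reduction: checking that the rewriting terminates, that the coefficient of the top monomial is genuinely left untouched, and that the per-variable degree bounds feeding the lemma are exactly those guaranteed by the hypothesis $|C_i| > \gamma_i$.
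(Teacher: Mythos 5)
Your proof is correct, but note that the paper does not actually prove this statement: Theorem~2.1 is quoted as a known result (the Non-Vanishing Corollary of the Combinatorial Nullstellensatz) with citations to Alon and Micha{\l}ek, and everything in the paper uses it as a black box. What you have written is essentially Alon's original argument: the grid-vanishing lemma (a multivariate polynomial of degree at most $t_i$ in each $x_i$ that vanishes on a grid $C_1\times\cdots\times C_k$ with $|C_i|=t_i+1$ is identically zero), combined with reduction modulo the polynomials $g_i(x_i)=\prod_{c\in C_i}(x_i-c)$, organized as a direct proof by contradiction of the non-vanishing statement rather than passing through the ideal-membership form of the Nullstellensatz. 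The one point on which this argument stands or falls---that each substitution $x_i^{|C_i|}\mapsto \tilde g_i$ strictly lowers total degree, so that the maximal-degree monomial $\prod_{i}x_i^{\gamma_i}$ is neither rewritten (its per-variable degrees are below $|C_i|$) nor ever produced, and hence its coefficient passes unchanged to $\bar f$---is handled correctly in your write-up. For comparison, the other proof cited by the paper (Micha{\l}ek's) is shorter and takes a genuinely different route: it inducts on $\deg f$, choosing a variable $x_i$ with $\gamma_i\geq 1$ and a point $c\in C_i$, writing $f=(x_i-c)q+r$ with $r$ free of $x_i$, and applying the inductive hypothesis to $q$; your route is longer but isolates a reusable lemma and needs no induction on degree. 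One last remark: finiteness of $\mathbb{F}$ is nowhere used---your argument, like Alon's, is valid over any field.
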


In the notation of the Non-Vanishing Corollary, we call the monomial $x_1^{|C_1| - 1}$ $\cdots$ $x_k^{|C_k| - 1}$ the {\em bounding monomial}. The corollary can be rephrased as requiring the polynomial to include a monomial of maximum degree that divides the bounding monomial (where by ``include" we mean that it has a nonzero coefficient).

To use the Non-Vanishing Corollary we require a polynomial for which the nonzeros correspond to successful solutions to the case of the problem under consideration. Here, we will consider subsets $S$ of a group $(G,\cdot_G)$ that is the semidirect product of $(\mathbb{Z}_p,+)$ and another group $(H,\cdot_H)$. For such groups we will use the notation $G=\mathbb{Z}_p\rtimes_{\varphi} H$ for some group homomorphism $\varphi: H \rightarrow Aut(\mathbb{Z}_p)$.
\begin{rem}
These kind of semidirect products define a wide class of groups.
Our hypotheses are, indeed, equivalent to consider groups $(G,\cdot)$ that admit an isomorphic copy of $(\mathbb{Z}_p,+)$ as a normal subgroup $N$ and whose cosets are of the form $N\cdot id,N\cdot c_1,\dots, N\cdot c_{e-1}$ where $e=|G/N|$ and $\{c_0=id,\dots, c_{e-1}\}$ is a subgroup of $G$.
\end{rem}
Now revisit the polynomial method introduced in \cite{Ollis} for dihedral groups and in \cite{CDOR} for groups of type $\mathbb{Z}_p\times H$.
To avoid confusion, from now on, the symbol $+$ will be used only between elements of $\mathbb{Z}_p$.
Let $\pi_2: \Z_p \rtimes_{\varphi} H \rightarrow H$ be the projection map that picks out the second coordinate of an element and for a subset $S \subseteq (\Z_{p} \rtimes_{\varphi} H) \setminus \{ (0,0) \}$ let $\pi_2(S)$ be the multiset $\{ \pi_2(s) : s \in S \}$. Also, the generic element of $\mathbb{Z}_p\rtimes_{\varphi} H$ will be indicated by $x\cdot_G a$ (or $y\cdot b$) implying that $x\in \Z_p$ and $a\in H$. We define the {\em type} of~$S$ to be the sequence $\bm{\lambda} = (\lambda_0, \ldots, \lambda_{e-1} )$, where $\lambda_i$ is the number of times that $i$ appears in $\pi_2(S)$.

Let $T$ be a multiset of elements from $H \setminus \{ 0 \}$ with size~$k$. Let $\bm{a} = (a_1, \ldots, a_k)$ be an arrangement of the elements of~$T$ with partial sums $\bm{b} = (b_0, b_1, \ldots, b_k)$. If, for each~$i$, the element $i$ appears at most $r$ times in~${\bf b}$, then ${\bf a}$ is a {\em quotient sequencing} of~$T$ with respect to~$r$. In our setting, given some~$S \subseteq (\Z_p \rtimes_{\varphi} H) \setminus \{ (0,0) \}$ for which we wish to find a sequencing, we shall be interested in quotient sequencings of $\pi_2(S)$ with respect to~$p$.

Given $S \subseteq (\Z_{p} \rtimes_{\varphi} H) \setminus \{ (0,0) \}$ we first construct a quotient sequencing of $\pi_2(S)$ with respect to~$p$. We then use the polynomial method to show that there is a sequencing for $S$ that projects elementwise onto that quotient sequencing. Given such a quotient sequencing $\bm{a} = (a_1, \ldots a_k)$ with partial sums $\bm{b} = (b_0, b_1, \ldots, b_k)$, let
$$\bm{x_a} = \left(x_1\cdot_G a_1, x_2\cdot_G a_2, \ldots, x_k\cdot_G a_k \right)$$
be a putative arrangement of the elements of~$S$ with partial sums
$$\bm{y_a} = \left(s_0,\ldots,s_k\right)=\left(y_0\cdot_G b_0, y_1\cdot_G b_1, \ldots, y_k\cdot_G b_k \right).$$
We will need the following lemma:
\begin{lem}\label{SumOnN}
Let $(G,\cdot_G)$ be a group of type $\Z_{p} \rtimes_{\varphi} H$. Then, considering $x_1\cdot_G a_1$ and $x_2\cdot_G a_2$ in $G$, there exists $\varphi_{a_1}\in \Z_p$ such that:
$$(x_1\cdot_G a_1)\cdot_G (x_2\cdot_G a_2)=(x_1 + (\varphi_{a_1} x_2))\cdot_G (a_1\cdot_H a_2)$$
where $\varphi_{a_1}x_2$ represent the product between the elements of $\Z_p$, $\varphi_{a_1}$ and $x_2$.
\end{lem}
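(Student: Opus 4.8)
The plan is to unwind the definition of the semidirect product and combine it with the description of $\mathrm{Aut}(\Z_p)$. Writing $x \cdot_G a$ for the pair $(x,a) \in \Z_p \times H$, the defining multiplication rule of $\Z_p \rtimes_{\varphi} H$ is
$$(x_1 \cdot_G a_1) \cdot_G (x_2 \cdot_G a_2) = \bigl(x_1 + \varphi(a_1)(x_2)\bigr) \cdot_G (a_1 \cdot_H a_2),$$
where $\varphi(a_1) \in \mathrm{Aut}(\Z_p)$ acts on the first coordinate. So the entire content of the lemma is to show that the action of $\varphi(a_1)$ on $x_2$ is realised as multiplication by a single scalar $\varphi_{a_1} \in \Z_p$.

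First I would identify $\mathrm{Aut}(\Z_p)$. Since $p$ is prime, any additive automorphism $\psi$ of $(\Z_p,+)$ is determined by its value on the generator~$1$: additivity gives $\psi(m) = m\,\psi(1)$ for every $m \in \Z_p$, and bijectivity forces $\psi(1) \neq 0$. Hence each automorphism is multiplication by a unique nonzero element, i.e.\ $\mathrm{Aut}(\Z_p) \cong \Z_p^{\ast}$.

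Applying this with $\psi = \varphi(a_1)$, I would let $\varphi_{a_1} \in \Z_p$ be the (nonzero) scalar with $\varphi(a_1)(x) = \varphi_{a_1}\,x$ for all $x \in \Z_p$; this scalar depends only on~$a_1$ because $\varphi$ is a fixed homomorphism. Substituting $\varphi(a_1)(x_2) = \varphi_{a_1}\,x_2$ into the multiplication rule above yields the claimed identity. There is no real obstacle here: the lemma is essentially a restatement of the semidirect-product multiplication once one records that every automorphism of $\Z_p$ is a scalar multiplication. The only point worth checking is the well-definedness of $\varphi_{a_1}$, which is immediate from the uniqueness of the scalar representing a given automorphism.
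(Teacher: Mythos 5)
Your proof is correct and follows essentially the same route as the paper's: unwind the semidirect-product multiplication rule and then use the fact that every automorphism of $(\Z_p,+)$ is multiplication by a nonzero scalar. The only difference is that you actually justify this last fact (via the value on the generator $1$), whereas the paper simply cites it as well known.
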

\begin{proof}
Due to the definition, we have that
$$(x_1\cdot_G a_1)\cdot_G (x_2\cdot_G a_2)=(x_1+ \varphi(a_1)(x_2))\cdot_G (a_1\cdot_H a_2).$$
Here the map
$$x\rightarrow \varphi(a_1)(x)$$
is an automorphism of $(\mathbb{Z}_p,+)$. It is well known that the automorphism of $(\mathbb{Z}_p,+)$ are the multiplications and hence there exists $\varphi_{a_1}\in \mathbb{Z}_p$ such that
$$\varphi(a_1)(x)=\varphi_{a_1}x.$$
Hence we have that
$$(x_1+ \varphi(a_1)(x_2))\cdot_G (a_1\cdot_H a_2)=(x_1 + (\varphi_{a_1} x_2))\cdot_G (a_1\cdot_H a_2)$$
which implies the thesis.
\end{proof}
From now on, when it is clear from the context, the index $G$ or $H$ in the products will be omitted.

Here we note that, if $j>i$, $s_i=s_j$ only when $$(x_{i+1}\cdot a_{i+1})\cdot (x_{i+2}\cdot a_{i+2})\cdot \ldots\cdot (x_{j}\cdot a_{j})=id.$$
Applying Lemma \ref{SumOnN}, this equality can be written as
$$(x_{i+1}+ \varphi_{a_{i+1}}x_{i+2})\cdot (a_{i+1}\cdot a_{i+2})\cdot \ldots\cdot (x_{j}\cdot a_{j})=id$$
and, reiterating it, as
$$(x_{i+1}+ \varphi_{a_{i+1}}x_{i+2}+\varphi_{a_{i+1}\cdot a_{i+2}}x_{i+3}+\dots+\varphi_{a_{i+1}\dots a_{j-1}}x_j)\cdot (a_{i+1}\cdot a_{i+2}\cdot\ldots\cdot a_{j})=id.$$
This implies that
$$a_{i+1}\cdot a_{i+2}\cdot \ldots\cdot a_{j} = id$$
which is $b_i=b_j$, and
$$x_{i+1}+ \varphi_{a_{i+1}}x_{i+2}+\varphi_{a_{i+1}\cdot a_{i+2}}x_{i+3}+\dots+\varphi_{a_{i+1}\dots a_{j-1}}x_j=0.$$
Hence, reasoning as in \cite{CDOR}, we can consider the following polynomial in the variables $x_1, x_2, \ldots, x_k$
\begin{equation}\label{pol1}p_{\bm{a}} = \prod_{\substack{1 \leq i < j \leq k \\ a_i = a_j }} (x_j - x_i)
\prod_{\substack{0 \leq i < j \leq k \\ b_i = b_j \\ j \neq i+1 }} (x_{i+1}+ \varphi_{a_{i+1}}x_{i+2}+\varphi_{a_{i+1}\cdot a_{i+2}}x_{i+3}+\dots+\varphi_{a_{i+1}\dots a_{j-1}}x_j) \end{equation}
where the variable~$x_i$ ranges over the values~$\{ c : c\cdot a_i \in S\}$ for each~$i$. Note that, if $G$ is non-abelian, there is no reason to exclude the pair $(0,k)$ in the second product of Equation \eqref{pol1}. Indeed we can even ask that $s_0\not=s_k$.
Now, it is clear that, given $S=\{x_1\cdot a_1, x_2\cdot a_2, \ldots, x_k\cdot a_k \}\subseteq G\setminus \{0\}$ of size $k$, $p_{\bm{a}} (x_1,\dots,x_k)\not=0$ only if the sequence $(x_1\cdot a_1, x_2\cdot a_2, \ldots, x_k\cdot a_k )$ is a sequencing for the set $S$.

\subsection{Computational Results}\label{subsec:comp1}
The method introduced in the previous paragraph generalizes that used in \cite{Ollis} to investigate the problem for dihedral groups. We recall the definition of these groups.
\begin{defn}\label{df:Dihedral}
The dihedral group $D_{2n}$ is the semidirect product $\Z_n\rtimes_{\varphi} \Z_2$ where
$$\varphi(0)=id \mbox{ and }\varphi(1)(x)=-x.$$
\end{defn}
In the following example, we see how the coefficients of Equation \eqref{pol1} behave for such groups.
\begin{ex}{Example}
Let $p$ be a prime,  $p > 3$,  and suppose $S \subseteq D_{2p} \setminus \{0\}$ with $|S|=5$ and of type~$(3,2)$.   The sequence $\bm{a} = (0,1,0,0,1)$ has partial sums $(0,  0,  1,  1,  1,  0)$ and so is a quotient sequencing of~$\pi_2(S)$ with respect to~$p$.
We desire a sequencing of~$S$ of the form 
$$\left( x_1 \cdot 0, x_2 \cdot 1,  x_3 \cdot 0,  x_4 \cdot 0,  x_5 \cdot 1 \right).$$
The polynomial is
\begin{equation*}
p_{\bm{a}} = (x_3 - x_1) (x_4 - x_1) (x_5 - x_2) (x_4 - x_3) (x_1 + x_2 - x_3 - x_4 - x_5) (x_2 - x_3 - x_4 - x_5) (x_3 + x_4) \,.
\end{equation*}
To apply the Non-Vanishing Corollary we need a monomial of this polynomial which divides the bounding monomial $x_1^2x_2 x_3^2 x_4^2 x_5$ with a nonzero coefficient.  One such is $x_1^2x_2 x_3^2 x_4 x_5^2$,  which has coefficient~$6$.   Hence whenever~$S$ has this form it has a sequencing. 
\end{ex}
Another class of groups that we will consider in this discussion is that of $G_{3p}$.
\begin{defn}\label{df:G3p}
Given $p\equiv 1 \pmod{6}$ and $r$ such that $r^3\equiv 1 \pmod{p}$ and $r\not\equiv 1 \pmod{p}$, $G_{3p}$ is defined as $\Z_p\rtimes_{\varphi} \Z_3$ where
$$\varphi(i)(x)=r^ix \mbox{ for any }i\in [0,1,2].$$
In case $r\equiv 1 \pmod{p}$, using the same construction we would obtain the direct product $\Z_p\times \Z_3$.
\end{defn}
In the following example, we see how the coefficients of Equation \eqref{pol1} behave for such groups.
\begin{ex}{Example}
Let $p$ be a prime,  $p > 3$,  and suppose $S \subseteq G_{3p} \setminus \{0\}$ with $|S|=5$ and of type~$(1,3,1)$.   The sequence $\bm{a} = (1,1,0,2,1)$ has partial sums $(0,  1,  2,  2,  1,  2)$ and so is a quotient sequencing of~$\pi_2(S)$ with respect to~$p$.
We desire a sequencing of~$S$ of the form 
$$\left( x_1 \cdot 1,  x_2 \cdot 1,  x_3 \cdot 0,  x_4 \cdot 2,   x_5 \cdot 1 \right).$$
The polynomial is
\begin{equation*}
p_{\bm{a}} =  (x_2 - x_1)(x_5 - x_1)(x_5 - x_2) (x_2 + r x_3 + r x_4) (x_3 + x_4 + r^2 x_5) (x_4 + r^2 x_5)\,.
\end{equation*}
To apply the Non-Vanishing Corollary we need a monomial of this polynomial which divides the bounding monomial $x_1^2x_2^2 x_5^2$ with a nonzero coefficient.  One such is $x_1^2x_2^2 x_5^2$,  which has coefficient~$-r^4 = -r$.   Hence whenever~$S$ has this form it has a sequencing.  
\end{ex}
We are now ready to prove the main result of this section.
\begin{thm}\label{th:main}
Let $n = pe$ with~$p$ prime and let $G$ be a group of size $n$. Then subsets~$S$ of size~$k$ of~$G$ are sequenceable in the following cases:
\begin{enumerate}
\item $k\not=4,5$ such that $k \leq 10$ and $e \in \{1,2,3\}$;
\item $k\leq 12$, $p>3$ and $e \in \{1,2\}$.
\end{enumerate}
\end{thm}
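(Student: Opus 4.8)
The plan is to embed every group in the statement into the semidirect-product framework of the previous subsection and then to settle the result by a finite application of the Non-Vanishing Corollary (Theorem~\ref{th:pm}). First I would check that each admissible $G$ has the form $\Z_p\rtimes_\varphi H$ with $|H|=e$. Whenever $p>e$ the number $n_p$ of Sylow $p$-subgroups satisfies $n_p\equiv 1\pmod p$ and $n_p\mid e<p$, so $n_p=1$ and the Sylow $p$-subgroup is a normal copy of $\Z_p$; since $\gcd(p,e)=1$ it has a complement of order~$e$ by Schur--Zassenhaus, which is exactly the situation of the Remark after Theorem~\ref{th:pm}. In case~(2) the hypothesis $p>3\ge e$ secures this, while in case~(1) the finitely many pairs $(p,e)$ with $p\le e\le 3$ give groups of order at most~$9$; these are abelian or split off the other prime (as with $D_6=\Z_3\rtimes\Z_2$), and I would dispatch them by a direct check. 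Up to isomorphism one is left with $H\in\{1,\Z_2,\Z_3\}$ and $\varphi\colon H\to\mathrm{Aut}(\Z_p)\cong\Z_{p-1}$, so that $G$ is one of $\Z_p$, the dihedral group $D_{2p}$ of Definition~\ref{df:Dihedral}, the cyclic group $\Z_{3p}$, or the group $G_{3p}$ of Definition~\ref{df:G3p}.

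Next I would reduce to a finite check indexed by the \emph{type} $\bm\lambda=(\lambda_0,\dots,\lambda_{e-1})$ of~$S$. Since $\sum_i\lambda_i=k$ there are only finitely many types, and both the hypotheses of Theorem~\ref{th:pm} and the polynomial $p_{\bm a}$ of Equation~\eqref{pol1} depend on~$S$ only through~$\bm\lambda$, the set $\{c:c\cdot a_i\in S\}$ entering solely through its cardinality~$\lambda_{a_i}$; automorphisms of~$H$ moreover identify types that differ by a permutation of the nonidentity cosets, shortening the list further. For each surviving type I would choose a quotient sequencing $\bm a=(a_1,\dots,a_k)$ of $\pi_2(S)$ with respect to~$p$ whose partial sums $\bm b$ repeat no value of~$H$ more than $p$ times; this is available for all $p$ large relative to~$k$, the remaining small primes falling under the direct check above. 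Because the statement only asks for sequenceability, I would drop the factor indexed by the pair $(0,k)$ in the second product of Equation~\eqref{pol1}, present only when the total $H$-sum $b_k$ is trivial, thereby allowing a cyclic sequencing and lowering the degree of $p_{\bm a}$.

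The computational core, run exactly as in the two Examples above, is then: for each (group, type) pair exhibit a quotient sequencing $\bm a$ for which $p_{\bm a}$ has a monomial of degree $\deg(p_{\bm a})$ dividing the bounding monomial $\prod_{i=1}^k x_i^{\lambda_{a_i}-1}$ with coefficient nonzero in~$\F_p$. Two features must be balanced by the choice of $\bm a$. The degree of $p_{\bm a}$, which is $\sum_v\binom{\lambda_v}{2}$ from the first product together with the number of pairs $0\le i<j\le k$ having $b_i=b_j$ and $j\ne i+1$ from the second, must not exceed the degree $\sum_v\lambda_v^2-k$ of the bounding monomial, so that a top-degree divisor can exist; and the coefficient of such a divisor must survive reduction modulo~$p$. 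For $\Z_p$, $\Z_{3p}$ and $D_{2p}$ this coefficient is an explicit integer, such as the value~$6$ in the first Example, nonzero precisely when $p>3$; for $G_{3p}$ it lies in $\Z[r]/(r^2+r+1)$ and must be shown to be a unit modulo~$p$, as with the coefficient $-r^4=-r$ in the second Example. The bounds $k\le 10$ and $k\le 12$ and the exclusion $k\in\{4,5\}$ in case~(1) mark exactly the sizes at which, for some type (the $G_{3p}$ family accounting for $k=4,5$), every admissible $\bm a$ forces either the degree bound to fail or the surviving coefficient to vanish.

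The hard part will be this final non-vanishing verification. On the one hand the number of (group, type, quotient-sequencing) triples is large enough that the search for a good monomial must be mechanised, and one must confirm that a single threshold $p_0(k)$ works throughout, the primes $3<p\le p_0(k)$ together with the groups too small to realise a given type being absorbed into the finite direct check. On the other hand, and more delicately, the $G_{3p}$ coefficients are polynomials in the primitive cube root of unity~$r$: one must rule out both that the chosen coefficient is a multiple of $r^2+r+1$, which would annihilate it for \emph{every} admissible~$r$, and that it is divisible by~$p$. Establishing that for each relevant type a quotient sequencing exists whose coefficient is a unit in $\Z[r]/(r^2+r+1)$, uniformly in the admissible primes $p\equiv 1\pmod 6$, is where the argument needs the most care and is settled by the symbolic computation over $\Z[r]/(r^2+r+1)$ described above.
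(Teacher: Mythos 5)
Your overall route is the paper's own: reduce to $\Z_p$, $\Z_p\times\Z_2$, $\Z_p\times\Z_3$, $D_{2p}$ and $G_{3p}$, enumerate the finitely many types $\bm{\lambda}$, and certify each type by one quotient sequencing together with a top-degree monomial dividing the bounding monomial, handling the $r$-dependence for $G_{3p}$ through $r^2+r+1\equiv 0\pmod p$ exactly as in the proof of Theorem~\ref{th:main}. The differences in bookkeeping are harmless: the paper discharges $e=1$, the direct products, $D_{2p}$ with $k\le 9$, and the all-reflection type $(0,k)$ by citing \cite{HOS19}, \cite{CDOR} and \cite{Ollis} instead of recomputing them, and it keeps the pair $(0,k)$ in Equation~\eqref{pol1} so as to also obtain Remark~\ref{rem:linear1}, whereas you drop it; either choice proves the stated theorem.

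There is, however, one genuine gap: your treatment of the infinitely many primes. A certificate produced by the search is an integer (or, for $G_{3p}$, an element $ar+b$ of $\Z[r]$), and it is valid only for primes not dividing it (respectively, not dividing the norm $a^2-ab+b^2$). You propose to fix one certificate per type, let $p_0(k)$ absorb its bad primes, and deal with $3<p\le p_0(k)$ by a ``finite direct check''. But the coefficients such searches actually produce have enormous prime factors --- Table~\ref{tab:12_2} contains factors like $481373$, $1033463$, $22399121$ and $215947813$ --- so $p_0(k)$ would be of order $10^8$, and enumerating all $k$-subsets of all groups of order $pe$ with $p\le p_0(k)$ is not an executable computation; nor is there any guarantee that some type admits a monomial with only small prime factors in its coefficient. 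The device you are missing (and which the paper uses) is to exhibit, for each type, \emph{two} monomials whose coefficients share only prime factors smaller than the least prime for which that type is realizable (a type needs $\lambda_0\le p-1$ and $\lambda_i\le p$), so that for every admissible $p$ at least one certificate survives reduction mod $p$; for $G_{3p}$ the same pairing, combined with the finiteness of solutions of $b^2-ab+a^2\equiv 0 \pmod p$, is what makes the argument uniform over all $p\equiv 1\pmod 6$. (Re-running the search modulo each of the finitely many bad primes of a single certificate would also work, but some such mechanism must be part of the proof.) A small further correction: the exclusion of $k=4,5$ in case (1) is not, as you assert, a degree or coefficient failure attributable to the $G_{3p}$ family; it is forced by genuine counterexamples in $D_6\cong S_3$, which case (1) admits because $p=3$ is allowed there --- the exceptional $4$-subset of $D_6$ from \cite{Ollis} and the set of all five non-identity elements of $S_3$, which is not sequenceable.
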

\begin{proof}
The case $e=1$ follows from the results of \cite{HOS19} and \cite{CDOR}. Since $\mathbb{Z}_p\rtimes_{\varphi} \{0\}$ is isomorphic to $\Z_p$, from the same papers, it follows the sequenceability of subsets $S$ of $D_{2p}$ (resp. $G_{3p}$) whose type is $(k,0)$ (resp. $(k,0,0)$).

Note that a group of size $2p$ is either $\Z_p\times \Z_2$ or the dihedral group $D_{2p}$. Analogously, a group of size $3p$ is either $\Z_p\times \Z_3$ or $G_{3p}$ for some $r$. Since the direct product case has already been considered in \cite{CDOR}, we consider here just the cases of $D_{2p}$ and $G_{3p}$. Also, for the dihedral groups, we can suppose that $k \geq 10$ since by \cite{Ollis} we know that subsets $S$ of size $k \leq 9$ in $D_{2p}$ (with $p>3$) are sequenceable.  We also know from \cite[Lemma 3.1]{Ollis} that subsets  $S \subseteq D_{2p}$ of size $k$ and of type $(0,k)$ are sequenceable for every $k$.
Then, in all of the other cases stated in the theorem, we can use the Non-Vanishing Corollary for suitable quotient sequences and nonzero monomial (in Tables \ref{tab:12_2} and \ref{tab:10_3} below we report the extreme cases). These results are obtained using the Python framework SageMath~\cite{SageMath}. 

Here, when considering $G_{3p}$, for each type we provide a set of coefficients whose expressions depend on $r$. We have checked that, for any prime $p\equiv 1 \pmod{6}$, at least one of these coefficients is nonzero proceeding as follows. The coefficients can be written in the form $ar+b$ for some $a,b$ whose expression do not depend on $p$. If $a\not\equiv 0\pmod{p}$, the coefficient is zero if and only if $r\equiv-b\cdot a^{-1}\pmod{p}$. Recalling that $r$ satisfies $r^2+r+1\equiv 0\pmod{p}$ we have that 
$a^{-2}(b^2-ab+a^2)\equiv 0\pmod{p}$
and hence, 
\begin{equation}\label{modp}
b^2-ab+a^2\equiv 0\pmod{p}.
\end{equation}
On the other hand, if $a\equiv 0 \pmod{p}$ the coefficient is zero only when also $b$ is zero modulo $p$ and hence Equation \eqref{modp} is satisfied also in this case. Since Equation \eqref{modp} is satisfied for finitely many $p$, one can check that, for any prime $p$, at least one of the coefficients presented in Table \ref{tab:10_3} is nonzero.
\end{proof}
\begin{rem}\label{rem:linear1}
Since in Equation \eqref{pol1} we have imposed also $s_0\not=s_k$, we have that in case $G=D_{2p}$ (resp. $G=G_{3p}$), $S$ is not of type $(k,0)$ (resp. $(k,0,0)$) and $k$ satisfies the hypotheses of the previous theorem, then $S$ admits a linear sequencing.

This implies that, under the hypothesis of Theorem \ref{th:main}, $S$ admits a linear sequencing unless $s_k$ is unavoidably the identity.
\end{rem}

The following Tables~\ref{tab:12_2} and \ref{tab:10_3} contain the required monomials and their coefficients for the proof of Theorem \ref{th:main} in the cases~$(G,k)=(D_{2p},12)$ and~$(G,k)=(G_{3p},10)$.

\footnotesize
\begin{longtable}{lllll}
\caption{Monomials and their coefficients for the proof of Theorem \ref{th:main} in the case $|S|=12$ and~$G=D_{2p}$.}\label{tab:12_2}\\
\hline
$\boldsymbol{\lambda}$ & $\mathbf{a}$ & deg & monomial/s & coefficient/s \\
\hline
\endfirsthead
\hline
$\boldsymbol{\lambda}$ & $\mathbf{a}$ & deg & monomial/s & coefficient/s \\
\hline
\endhead
$(11, 1)$ & \begin{tabular}{@{}l@{}} $(0, 0, 0, 0, 0, 1, 0, 0, 0, 0, 0, 0)$ \end{tabular} & \begin{tabular}{@{}l@{}} 71 \end{tabular} & \begin{tabular}{@{}l@{}} $x_{2}^{3}x_{3}^{5}x_{4}^{7}x_{5}^{9}x_{7}^{9}x_{8}^{9}x_{9}^{9}x_{10}^{9}x_{11}^{10}x_{12}^{10}$ \\ $x_{2}^{3}x_{3}^{5}x_{4}^{8}x_{5}^{8}x_{7}^{9}x_{8}^{9}x_{9}^{9}x_{10}^{9}x_{11}^{10}x_{12}^{10}$ \end{tabular} & \begin{tabular}{@{}l@{}} $2 \cdot 3 \cdot 7^2 \cdot 131$ \\ $2 \cdot 5 \cdot 7 \cdot 593$ \end{tabular}\\ \hline \\ 
$(10, 2)$ & \begin{tabular}{@{}l@{}} $(0, 0, 0, 0, 0, 1, 0, 0, 0, 0, 0, 1)$ \end{tabular} & \begin{tabular}{@{}l@{}} 64 \end{tabular} & \begin{tabular}{@{}l@{}} $x_{2}^{4}x_{3}^{5}x_{4}^{9}x_{5}^{8}x_{6}x_{7}^{8}x_{8}^{9}x_{9}^{9}x_{10}^{9}x_{11}^{9}x_{12}$ \\ $x_{2}^{4}x_{3}^{5}x_{4}^{8}x_{5}^{9}x_{6}x_{7}^{8}x_{8}^{9}x_{9}^{9}x_{10}^{9}x_{11}^{9}x_{12}$ \end{tabular} & \begin{tabular}{@{}l@{}} $- 2^7 \cdot 3 \cdot 29 \cdot 41$ \\ $- 2^5 \cdot 3 \cdot 101 \cdot 227$ \end{tabular}\\ \hline \\ 
$(9, 3)$ & \begin{tabular}{@{}l@{}} $(0, 0, 0, 0, 1, 1, 1, 0, 0, 0, 0, 0)$ \end{tabular} & \begin{tabular}{@{}l@{}} 57 \end{tabular} & \begin{tabular}{@{}l@{}} $x_{2}^{2}x_{3}^{8}x_{4}^{8}x_{5}^{2}x_{6}^{2}x_{7}^{2}x_{8}^{8}x_{9}^{8}x_{10}^{8}x_{11}^{8}x_{12}^{8}$ \\ $x_{2}^{3}x_{3}^{7}x_{4}^{8}x_{5}^{2}x_{6}^{2}x_{7}^{2}x_{8}^{8}x_{9}^{8}x_{10}^{8}x_{11}^{8}x_{12}^{8}$ \end{tabular} & \begin{tabular}{@{}l@{}} $- 2^3 \cdot 3^3 \cdot 5 \cdot 31$ \\ $- 2^2 \cdot 3^2 \cdot 61 \cdot 73$ \end{tabular}\\ \hline \\ 
$(8, 4)$ & \begin{tabular}{@{}l@{}} $(0, 0, 0, 0, 1, 1, 1, 0, 0, 0, 0, 1)$ \end{tabular} & \begin{tabular}{@{}l@{}} 54 \end{tabular} & \begin{tabular}{@{}l@{}} $x_{2}^{6}x_{3}^{7}x_{4}^{7}x_{5}^{3}x_{6}^{3}x_{7}^{3}x_{8}^{7}x_{9}^{7}x_{10}^{7}x_{11}^{7}x_{12}^{3}$ \\ $x_{2}^{7}x_{3}^{7}x_{4}^{7}x_{5}^{3}x_{6}^{3}x_{7}^{3}x_{8}^{6}x_{9}^{7}x_{10}^{7}x_{11}^{7}x_{12}^{3}$ \end{tabular} & \begin{tabular}{@{}l@{}} $2^6 \cdot 7 \cdot 7669$ \\ $- 2^6 \cdot 11 \cdot 28591$ \end{tabular}\\ \hline \\ 
$(7, 5)$ & \begin{tabular}{@{}l@{}} $(0, 0, 0, 1, 1, 1, 1, 1, 0, 0, 0, 0)$ \end{tabular} & \begin{tabular}{@{}l@{}} 51 \end{tabular} & \begin{tabular}{@{}l@{}} $x_{2}^{6}x_{3}^{6}x_{4}^{4}x_{5}^{4}x_{6}^{4}x_{7}^{4}x_{8}^{4}x_{9}^{6}x_{10}^{6}x_{11}^{6}x_{12}^{6}$ \\ $x_{1}^{6}x_{3}^{6}x_{4}^{4}x_{5}^{4}x_{6}^{4}x_{7}^{4}x_{8}^{4}x_{9}^{6}x_{10}^{6}x_{11}^{6}x_{12}^{6}$ \end{tabular} & \begin{tabular}{@{}l@{}} $- 2^3 \cdot 3^2 \cdot 5 \cdot 7 \cdot 11 \cdot 401$ \\ $- 2^2 \cdot 5^2 \cdot 179 \cdot 389$ \end{tabular}\\ \hline \\ 
$(6, 6)$ & \begin{tabular}{@{}l@{}} $(0, 0, 0, 1, 1, 1, 1, 1, 0, 0, 0, 1)$ \end{tabular} & \begin{tabular}{@{}l@{}} 52 \end{tabular} & \begin{tabular}{@{}l@{}} $x_{1}^{2}x_{2}^{4}x_{3}^{5}x_{4}^{5}x_{5}^{5}x_{6}^{5}x_{7}^{5}x_{8}^{5}x_{9}^{5}x_{10}^{5}x_{11}^{5}x_{12}^{5}$ \\ $x_{1}^{3}x_{2}^{3}x_{3}^{5}x_{4}^{5}x_{5}^{5}x_{6}^{5}x_{7}^{5}x_{8}^{5}x_{9}^{5}x_{10}^{5}x_{11}^{5}x_{12}^{5}$ \end{tabular} & \begin{tabular}{@{}l@{}} $2 \cdot 3^3 \cdot 181 \cdot 481373$ \\ $- 2 \cdot 3^2 \cdot 13 \cdot 22399121$ \end{tabular}\\ \hline \\ 
$(4, 8)$ & \begin{tabular}{@{}l@{}} $(0, 0, 1, 1, 1, 1, 1, 1, 1, 0, 0, 1)$ \end{tabular} & \begin{tabular}{@{}l@{}} 58 \end{tabular} & \begin{tabular}{@{}l@{}} $x_{2}^{3}x_{3}^{4}x_{4}^{5}x_{5}^{7}x_{6}^{7}x_{7}^{7}x_{8}^{7}x_{9}^{7}x_{10}^{3}x_{11}^{3}x_{12}^{7}$ \\ $x_{2}^{3}x_{3}^{4}x_{4}^{6}x_{5}^{6}x_{6}^{7}x_{7}^{7}x_{8}^{7}x_{9}^{7}x_{10}^{3}x_{11}^{3}x_{12}^{7}$ \end{tabular} & \begin{tabular}{@{}l@{}} $2^4 \cdot 3 \cdot 79 \cdot 179 \cdot 18979$ \\ $- 2^4 \cdot 3^2 \cdot 5 \cdot 109 \cdot 119839$ \end{tabular}\\ \hline \\ 
$(3, 9)$ & \begin{tabular}{@{}l@{}} $(0, 1, 1, 1, 1, 1, 1, 1, 1, 1, 0, 0)$ \end{tabular} & \begin{tabular}{@{}l@{}} 63 \end{tabular} & \begin{tabular}{@{}l@{}} $x_{1}^{2}x_{2}^{4}x_{3}^{6}x_{4}^{8}x_{5}^{8}x_{7}^{8}x_{8}^{8}x_{9}^{8}x_{10}^{8}x_{11}^{2}x_{12}^{2}$ \\ $x_{1}^{2}x_{2}^{5}x_{3}^{5}x_{4}^{8}x_{5}^{8}x_{7}^{8}x_{8}^{8}x_{9}^{8}x_{10}^{8}x_{11}^{2}x_{12}^{2}$ \end{tabular} & \begin{tabular}{@{}l@{}} $3 \cdot 62701 \cdot 131111$ \\ $- 2 \cdot 3 \cdot 5 \cdot 13 \cdot 17 \cdot 3699679$ \end{tabular}\\ \hline \\ 
$(2, 10)$ & \begin{tabular}{@{}l@{}} $(0, 1, 1, 1, 1, 1, 1, 1, 1, 1, 0, 1)$ \end{tabular} & \begin{tabular}{@{}l@{}} 72 \end{tabular} & \begin{tabular}{@{}l@{}} $x_{1}x_{2}^{4}x_{3}^{4}x_{4}^{8}x_{5}^{9}x_{7}^{9}x_{8}^{9}x_{9}^{9}x_{10}^{9}x_{11}x_{12}^{9}$ \\ $x_{1}x_{2}^{4}x_{3}^{5}x_{4}^{7}x_{5}^{9}x_{7}^{9}x_{8}^{9}x_{9}^{9}x_{10}^{9}x_{11}x_{12}^{9}$ \end{tabular} & \begin{tabular}{@{}l@{}} $2 \cdot 5 \cdot 71 \cdot 1427 \cdot 1033463$ \\ $- 2 \cdot 3^2 \cdot 7 \cdot 23 \cdot 215947813$ \end{tabular}\\ \hline \\ 
$(1, 11)$ & \begin{tabular}{@{}l@{}} $(1, 1, 1, 1, 1, 1, 1, 1, 1, 1, 1, 0)$ \end{tabular} & \begin{tabular}{@{}l@{}} 80 \end{tabular} & \begin{tabular}{@{}l@{}} $x_{2}^{5}x_{3}^{5}x_{4}^{7}x_{5}^{7}x_{6}^{8}x_{7}^{8}x_{8}^{10}x_{9}^{10}x_{10}^{10}x_{11}^{10}$ \\ $x_{2}^{4}x_{3}^{6}x_{4}^{7}x_{5}^{7}x_{6}^{8}x_{7}^{8}x_{8}^{10}x_{9}^{10}x_{10}^{10}x_{11}^{10}$ \end{tabular} & \begin{tabular}{@{}l@{}} $2^3 \cdot 7 \cdot 101 \cdot 81083329$ \\ $- 2^5 \cdot 5 \cdot 31 \cdot 3557 \cdot 21647$ \end{tabular}
\\ \hline
%
\end{longtable}
\footnotesize
\begin{longtable}{lllll}
\caption{Monomials and their coefficients for the proof of Theorem \ref{th:main} in the case $|S|=10$ and~$G=G_{3p}$.}\label{tab:10_3}\\
\hline
$\boldsymbol{\lambda}$ & $\mathbf{a}$ & deg & monomial/s & coefficient/s \\
\hline
\endfirsthead
\hline
$\boldsymbol{\lambda}$ & $\mathbf{a}$ & deg & monomial/s & coefficient/s \\
\hline
\endhead
$(9, 1, 0)$ & \begin{tabular}{@{}l@{}} $(0, 0, 0, 0, 0, 1, 0, 0, 0, 0)$ \end{tabular} & \begin{tabular}{@{}l@{}} 52 \end{tabular} & \begin{tabular}{@{}l@{}} $x_{2}^{2}x_{3}^{4}x_{4}^{7}x_{5}^{8}x_{7}^{7}x_{8}^{8}x_{9}^{8}x_{10}^{8}$ \end{tabular} & \begin{tabular}{@{}l@{}} $-2^2$ \end{tabular}\\ \hline \\ 
$(8, 2, 0)$ & \begin{tabular}{@{}l@{}} $(0, 0, 0, 1, 0, 0, 0, 1, 0, 0)$ \end{tabular} & \begin{tabular}{@{}l@{}} 36 \end{tabular} & \begin{tabular}{@{}l@{}} $x_{2}^{2}x_{3}^{4}x_{5}^{3}x_{6}^{6}x_{7}^{7}x_{8}x_{9}^{6}x_{10}^{7}$ \end{tabular} & \begin{tabular}{@{}l@{}} $-2$ \end{tabular}\\ \hline \\ 
$(8, 1, 1)$ & \begin{tabular}{@{}l@{}} $(0, 0, 0, 0, 1, 0, 0, 0, 0, 2)$ \end{tabular} & \begin{tabular}{@{}l@{}} 45 \end{tabular} & \begin{tabular}{@{}l@{}} $x_{1}x_{2}^{4}x_{3}^{6}x_{4}^{7}x_{6}^{6}x_{7}^{7}x_{8}^{7}x_{9}^{7}$ \\ $x_{1}^{2}x_{2}^{3}x_{3}^{6}x_{4}^{7}x_{6}^{6}x_{7}^{7}x_{8}^{7}x_{9}^{7}$ \end{tabular} & \begin{tabular}{@{}l@{}} $-1791r + 573$ \\ $-235r - 746$ \end{tabular}\\ \hline \\ 
$(7, 3, 0)$ & \begin{tabular}{@{}l@{}} $(0, 0, 1, 0, 0, 0, 1, 0, 0, 1)$ \end{tabular} & \begin{tabular}{@{}l@{}} 32 \end{tabular} & \begin{tabular}{@{}l@{}} $x_{1}^{2}x_{2}^{3}x_{4}^{2}x_{5}^{5}x_{6}^{6}x_{7}x_{8}^{5}x_{9}^{6}x_{10}^{2}$ \\ $x_{1}^{2}x_{2}^{2}x_{4}^{3}x_{5}^{5}x_{6}^{6}x_{7}x_{8}^{5}x_{9}^{6}x_{10}^{2}$ \end{tabular} & \begin{tabular}{@{}l@{}} $14r + 73$ \\ $12r + 21$ \end{tabular}\\ \hline \\ 
$(7, 2, 1)$ & \begin{tabular}{@{}l@{}} $(0, 0, 0, 1, 0, 0, 1, 0, 0, 2)$ \end{tabular} & \begin{tabular}{@{}l@{}} 30 \end{tabular} & \begin{tabular}{@{}l@{}} $x_{1}x_{2}^{2}x_{3}^{4}x_{4}x_{5}^{5}x_{6}^{5}x_{7}x_{8}^{5}x_{9}^{6}$ \end{tabular} & \begin{tabular}{@{}l@{}} $1$ \end{tabular}\\ \hline \\ 
$(6, 4, 0)$ & \begin{tabular}{@{}l@{}} $(0, 0, 1, 0, 0, 1, 0, 0, 1, 1)$ \end{tabular} & \begin{tabular}{@{}l@{}} 30 \end{tabular} & \begin{tabular}{@{}l@{}} $x_{2}^{2}x_{3}^{2}x_{4}^{4}x_{5}^{5}x_{6}^{2}x_{7}^{4}x_{8}^{5}x_{9}^{3}x_{10}^{3}$ \\ $x_{1}x_{2}^{2}x_{3}^{2}x_{4}^{3}x_{5}^{5}x_{6}^{2}x_{7}^{4}x_{8}^{5}x_{9}^{3}x_{10}^{3}$ \end{tabular} & \begin{tabular}{@{}l@{}} $-1168r + 72$ \\ $-436r - 1078$ \end{tabular}\\ \hline \\ 
$(6, 3, 1)$ & \begin{tabular}{@{}l@{}} $(0, 0, 0, 1, 0, 0, 1, 0, 2, 1)$ \end{tabular} & \begin{tabular}{@{}l@{}} 27 \end{tabular} & \begin{tabular}{@{}l@{}} $x_{2}^{2}x_{3}^{5}x_{4}x_{5}^{5}x_{6}^{5}x_{7}^{2}x_{8}^{5}x_{10}^{2}$ \end{tabular} & \begin{tabular}{@{}l@{}} $r + 1$ \end{tabular}\\ \hline \\ 
$(6, 2, 2)$ & \begin{tabular}{@{}l@{}} $(0, 0, 1, 0, 0, 1, 0, 0, 2, 2)$ \end{tabular} & \begin{tabular}{@{}l@{}} 26 \end{tabular} & \begin{tabular}{@{}l@{}} $x_{1}^{3}x_{2}^{3}x_{3}x_{4}^{3}x_{5}^{4}x_{6}x_{7}^{4}x_{8}^{5}x_{9}x_{10}$ \\ $x_{1}^{2}x_{2}^{5}x_{4}^{3}x_{5}^{4}x_{6}x_{7}^{4}x_{8}^{5}x_{9}x_{10}$ \end{tabular} & \begin{tabular}{@{}l@{}} $10r - 64$ \\ $155r + 583$ \end{tabular}\\ \hline \\ 
$(5, 5, 0)$ & \begin{tabular}{@{}l@{}} $(0, 0, 1, 0, 0, 1, 0, 1, 1, 1)$ \end{tabular} & \begin{tabular}{@{}l@{}} 30 \end{tabular} & \begin{tabular}{@{}l@{}} $x_{2}^{2}x_{3}x_{4}^{3}x_{5}^{4}x_{6}^{4}x_{7}^{4}x_{8}^{4}x_{9}^{4}x_{10}^{4}$ \\ $x_{1}x_{2}^{2}x_{3}^{2}x_{4}^{2}x_{5}^{3}x_{6}^{4}x_{7}^{4}x_{8}^{4}x_{9}^{4}x_{10}^{4}$ \end{tabular} & \begin{tabular}{@{}l@{}} $1965r + 2715$ \\ $385r - 190$ \end{tabular}\\ \hline \\ 
$(5, 4, 1)$ & \begin{tabular}{@{}l@{}} $(0, 0, 1, 0, 0, 1, 0, 2, 1, 1)$ \end{tabular} & \begin{tabular}{@{}l@{}} 26 \end{tabular} & \begin{tabular}{@{}l@{}} $x_{1}x_{2}^{4}x_{3}x_{4}^{3}x_{5}^{4}x_{6}^{3}x_{7}^{4}x_{9}^{3}x_{10}^{3}$ \\ $x_{1}^{2}x_{2}^{3}x_{3}x_{4}^{3}x_{5}^{4}x_{6}^{3}x_{7}^{4}x_{9}^{3}x_{10}^{3}$ \end{tabular} & \begin{tabular}{@{}l@{}} $30r + 156$ \\ $42r + 72$ \end{tabular}\\ \hline \\ 
$(5, 3, 2)$ & \begin{tabular}{@{}l@{}} $(0, 0, 0, 1, 0, 1, 0, 2, 1, 2)$ \end{tabular} & \begin{tabular}{@{}l@{}} 24 \end{tabular} & \begin{tabular}{@{}l@{}} $x_{1}^{4}x_{2}^{4}x_{3}^{3}x_{4}^{2}x_{5}^{4}x_{6}^{2}x_{7}^{2}x_{8}x_{9}^{2}$ \end{tabular} & \begin{tabular}{@{}l@{}} $2$ \end{tabular}\\ \hline \\ 
$(4, 6, 0)$ & \begin{tabular}{@{}l@{}} $(0, 1, 0, 0, 1, 0, 1, 1, 1, 1)$ \end{tabular} & \begin{tabular}{@{}l@{}} 32 \end{tabular} & \begin{tabular}{@{}l@{}} $x_{1}^{3}x_{2}^{5}x_{3}^{3}x_{4}^{3}x_{5}^{5}x_{6}^{3}x_{7}^{4}x_{8}^{4}x_{9}x_{10}$ \\ $x_{1}^{2}x_{2}^{5}x_{3}^{3}x_{4}^{3}x_{5}^{5}x_{6}^{3}x_{7}^{5}x_{8}^{4}x_{9}x_{10}$ \end{tabular} & \begin{tabular}{@{}l@{}} $-11377r + 11374$ \\ $-21587r - 27358$ \end{tabular}\\ \hline \\ 
$(4, 5, 1)$ & \begin{tabular}{@{}l@{}} $(0, 0, 1, 0, 1, 0, 1, 1, 1, 2)$ \end{tabular} & \begin{tabular}{@{}l@{}} 27 \end{tabular} & \begin{tabular}{@{}l@{}} $x_{2}^{2}x_{3}^{3}x_{4}^{3}x_{5}^{4}x_{6}^{3}x_{7}^{4}x_{8}^{4}x_{9}^{4}$ \\ $x_{1}^{2}x_{2}^{2}x_{3}^{3}x_{4}^{3}x_{5}^{3}x_{6}^{3}x_{7}^{3}x_{8}^{4}x_{9}^{4}$ \end{tabular} & \begin{tabular}{@{}l@{}} $-1286r + 68$ \\ $-333r - 119$ \end{tabular}\\ \hline \\ 
$(4, 4, 2)$ & \begin{tabular}{@{}l@{}} $(0, 0, 0, 1, 0, 1, 2, 1, 2, 1)$ \end{tabular} & \begin{tabular}{@{}l@{}} 24 \end{tabular} & \begin{tabular}{@{}l@{}} $x_{1}^{2}x_{2}^{2}x_{3}^{3}x_{4}^{3}x_{5}^{3}x_{6}^{3}x_{7}x_{8}^{3}x_{9}x_{10}^{3}$ \end{tabular} & \begin{tabular}{@{}l@{}} $-2r$ \end{tabular}\\ \hline \\ 
$(4, 3, 3)$ & \begin{tabular}{@{}l@{}} $(0, 0, 1, 0, 1, 0, 2, 1, 2, 2)$ \end{tabular} & \begin{tabular}{@{}l@{}} 23 \end{tabular} & \begin{tabular}{@{}l@{}} $x_{1}^{2}x_{2}^{3}x_{3}^{2}x_{4}^{3}x_{5}^{2}x_{6}^{3}x_{7}^{2}x_{8}^{2}x_{9}^{2}x_{10}^{2}$ \\ $x_{1}^{3}x_{2}^{3}x_{3}^{2}x_{4}^{3}x_{5}^{2}x_{6}^{2}x_{7}^{2}x_{8}^{2}x_{9}^{2}x_{10}^{2}$ \end{tabular} & \begin{tabular}{@{}l@{}} $-1452r + 1686$ \\ $-759r - 348$ \end{tabular}\\ \hline \\ 
$(3, 7, 0)$ & \begin{tabular}{@{}l@{}} $(0, 1, 0, 1, 0, 1, 1, 1, 1, 1)$ \end{tabular} & \begin{tabular}{@{}l@{}} 36 \end{tabular} & \begin{tabular}{@{}l@{}} $x_{2}x_{3}^{2}x_{4}x_{5}^{2}x_{6}^{6}x_{7}^{6}x_{8}^{6}x_{9}^{6}x_{10}^{6}$ \\ $x_{2}x_{3}^{2}x_{4}^{3}x_{5}^{2}x_{6}^{4}x_{7}^{6}x_{8}^{6}x_{9}^{6}x_{10}^{6}$ \end{tabular} & \begin{tabular}{@{}l@{}} $-19616r + 331264$ \\ $802469r - 381850$ \end{tabular}\\ \hline \\ 
$(3, 6, 1)$ & \begin{tabular}{@{}l@{}} $(0, 0, 1, 0, 1, 1, 1, 1, 2, 1)$ \end{tabular} & \begin{tabular}{@{}l@{}} 30 \end{tabular} & \begin{tabular}{@{}l@{}} $x_{1}^{2}x_{2}x_{3}^{5}x_{4}^{2}x_{5}^{5}x_{6}^{5}x_{7}^{5}x_{8}^{4}x_{10}$ \\ $x_{1}^{2}x_{2}x_{3}^{3}x_{4}^{2}x_{5}^{5}x_{6}^{5}x_{7}^{5}x_{8}^{4}x_{10}^{3}$ \end{tabular} & \begin{tabular}{@{}l@{}} $3396r - 230$ \\ $133r + 344$ \end{tabular}\\ \hline \\ 
$(3, 5, 2)$ & \begin{tabular}{@{}l@{}} $(0, 0, 1, 0, 1, 2, 1, 2, 1, 1)$ \end{tabular} & \begin{tabular}{@{}l@{}} 26 \end{tabular} & \begin{tabular}{@{}l@{}} $x_{1}^{2}x_{2}^{2}x_{3}^{2}x_{4}^{2}x_{5}^{4}x_{6}x_{7}^{4}x_{8}x_{9}^{4}x_{10}^{4}$ \\ $x_{1}^{2}x_{2}^{2}x_{3}^{3}x_{4}^{2}x_{5}^{3}x_{6}x_{7}^{4}x_{8}x_{9}^{4}x_{10}^{4}$ \end{tabular} & \begin{tabular}{@{}l@{}} $-3r + 24$ \\ $21r + 21$ \end{tabular}\\ \hline \\ 
$(3, 4, 3)$ & \begin{tabular}{@{}l@{}} $(0, 0, 1, 0, 1, 1, 2, 2, 1, 2)$ \\ $(0, 0, 1, 1, 0, 1, 2, 2, 1, 2)$ \end{tabular} & \begin{tabular}{@{}l@{}} 24 \\ 24 \end{tabular} & \begin{tabular}{@{}l@{}} $x_{1}^{2}x_{2}^{2}x_{3}^{3}x_{4}^{2}x_{5}^{3}x_{6}^{3}x_{7}^{2}x_{8}^{2}x_{9}^{3}x_{10}^{2}$ \\ $x_{1}^{2}x_{2}^{2}x_{3}^{3}x_{4}^{3}x_{5}^{2}x_{6}^{3}x_{7}^{2}x_{8}^{2}x_{9}^{3}x_{10}^{2}$ \end{tabular} & \begin{tabular}{@{}l@{}} $1358r + 1812$ \\ $-1742r - 1035$ \end{tabular}\\ \hline \\ 
$(2, 8, 0)$ & \begin{tabular}{@{}l@{}} $(0, 1, 0, 1, 1, 1, 1, 1, 1, 1)$ \end{tabular} & \begin{tabular}{@{}l@{}} 42 \end{tabular} & \begin{tabular}{@{}l@{}} $x_{3}x_{4}^{2}x_{5}^{4}x_{6}^{7}x_{7}^{7}x_{8}^{7}x_{9}^{7}x_{10}^{7}$ \\ $x_{3}x_{4}^{2}x_{5}^{5}x_{6}^{6}x_{7}^{7}x_{8}^{7}x_{9}^{7}x_{10}^{7}$ \end{tabular} & \begin{tabular}{@{}l@{}} $65374r + 67999$ \\ $-160333r - 7057$ \end{tabular}\\ \hline \\ 
$(2, 7, 1)$ & \begin{tabular}{@{}l@{}} $(0, 1, 0, 1, 1, 1, 1, 2, 1, 1)$ \end{tabular} & \begin{tabular}{@{}l@{}} 35 \end{tabular} & \begin{tabular}{@{}l@{}} $x_{1}x_{3}x_{4}^{3}x_{5}^{6}x_{6}^{6}x_{7}^{6}x_{9}^{6}x_{10}^{6}$ \\ $x_{1}x_{3}x_{4}^{4}x_{5}^{5}x_{6}^{6}x_{7}^{6}x_{9}^{6}x_{10}^{6}$ \end{tabular} & \begin{tabular}{@{}l@{}} $17861r - 27263$ \\ $12821r + 26143$ \end{tabular}\\ \hline \\ 
$(2, 6, 2)$ & \begin{tabular}{@{}l@{}} $(2, 0, 1, 1, 1, 1, 1, 2, 1, 0)$ \end{tabular} & \begin{tabular}{@{}l@{}} 30 \end{tabular} & \begin{tabular}{@{}l@{}} $x_{3}^{3}x_{4}^{5}x_{5}^{5}x_{6}^{5}x_{7}^{5}x_{8}x_{9}^{5}x_{10}$ \\ $x_{3}^{4}x_{4}^{4}x_{5}^{5}x_{6}^{5}x_{7}^{5}x_{8}x_{9}^{5}x_{10}$ \end{tabular} & \begin{tabular}{@{}l@{}} $-292r + 246$ \\ $85r + 126$ \end{tabular}\\ \hline \\ 
$(2, 5, 3)$ & \begin{tabular}{@{}l@{}} $(2, 1, 1, 1, 0, 2, 2, 1, 0, 1)$ \end{tabular} & \begin{tabular}{@{}l@{}} 27 \end{tabular} & \begin{tabular}{@{}l@{}} $x_{1}^{2}x_{2}^{4}x_{3}^{4}x_{4}^{4}x_{6}^{2}x_{7}^{2}x_{8}^{4}x_{9}x_{10}^{4}$ \\ $x_{1}^{2}x_{2}^{3}x_{3}^{4}x_{4}^{4}x_{5}x_{6}^{2}x_{7}^{2}x_{8}^{4}x_{9}x_{10}^{4}$ \end{tabular} & \begin{tabular}{@{}l@{}} $17651r - 23546$ \\ $-10860r - 152465$ \end{tabular}\\ \hline \\ 
$(2, 4, 4)$ & \begin{tabular}{@{}l@{}} $(0, 1, 0, 1, 1, 2, 2, 1, 2, 2)$ \\ $(0, 1, 1, 0, 1, 2, 2, 1, 2, 2)$ \end{tabular} & \begin{tabular}{@{}l@{}} 26 \\ 26 \end{tabular} & \begin{tabular}{@{}l@{}} $x_{1}x_{2}^{3}x_{3}x_{4}^{3}x_{5}^{3}x_{6}^{3}x_{7}^{3}x_{8}^{3}x_{9}^{3}x_{10}^{3}$ \\ $x_{1}x_{2}^{3}x_{3}^{3}x_{4}x_{5}^{3}x_{6}^{3}x_{7}^{3}x_{8}^{3}x_{9}^{3}x_{10}^{3}$ \end{tabular} & \begin{tabular}{@{}l@{}} $136392r + 247068$ \\ $-175716r - 167652$ \end{tabular}\\ \hline \\ 
$(1, 9, 0)$ & \begin{tabular}{@{}l@{}} $(1, 0, 1, 1, 1, 1, 1, 1, 1, 1)$ \end{tabular} & \begin{tabular}{@{}l@{}} 50 \end{tabular} & \begin{tabular}{@{}l@{}} $x_{3}^{2}x_{4}^{4}x_{5}^{5}x_{6}^{7}x_{7}^{8}x_{8}^{8}x_{9}^{8}x_{10}^{8}$ \\ $x_{3}^{3}x_{4}^{3}x_{5}^{5}x_{6}^{7}x_{7}^{8}x_{8}^{8}x_{9}^{8}x_{10}^{8}$ \end{tabular} & \begin{tabular}{@{}l@{}} $992906r + 1427608$ \\ $-1815167r - 733348$ \end{tabular}\\ \hline \\ 
$(1, 8, 1)$ & \begin{tabular}{@{}l@{}} $(0, 1, 1, 1, 1, 1, 1, 1, 1, 2)$ \end{tabular} & \begin{tabular}{@{}l@{}} 42 \end{tabular} & \begin{tabular}{@{}l@{}} $x_{2}x_{3}^{2}x_{4}^{4}x_{5}^{7}x_{6}^{7}x_{7}^{7}x_{8}^{7}x_{9}^{7}$ \\ $x_{2}x_{3}^{2}x_{4}^{5}x_{5}^{6}x_{6}^{7}x_{7}^{7}x_{8}^{7}x_{9}^{7}$ \end{tabular} & \begin{tabular}{@{}l@{}} $63854r - 6907$ \\ $38518r + 68818$ \end{tabular}\\ \hline \\ 
$(1, 7, 2)$ & \begin{tabular}{@{}l@{}} $(0, 1, 1, 1, 1, 1, 1, 2, 2, 1)$ \end{tabular} & \begin{tabular}{@{}l@{}} 36 \end{tabular} & \begin{tabular}{@{}l@{}} $x_{3}^{4}x_{4}^{6}x_{5}^{6}x_{6}^{6}x_{7}^{6}x_{8}x_{9}x_{10}^{6}$ \\ $x_{3}^{5}x_{4}^{5}x_{5}^{6}x_{6}^{6}x_{7}^{6}x_{8}x_{9}x_{10}^{6}$ \end{tabular} & \begin{tabular}{@{}l@{}} $-16124r - 27940$ \\ $3718r + 13449$ \end{tabular}\\ \hline \\ 
$(1, 6, 3)$ & \begin{tabular}{@{}l@{}} $(1, 1, 0, 1, 1, 1, 2, 1, 2, 2)$ \end{tabular} & \begin{tabular}{@{}l@{}} 32 \end{tabular} & \begin{tabular}{@{}l@{}} $x_{1}x_{2}^{5}x_{4}^{5}x_{5}^{5}x_{6}^{5}x_{7}^{2}x_{8}^{5}x_{9}^{2}x_{10}^{2}$ \\ $x_{1}^{2}x_{2}^{4}x_{4}^{5}x_{5}^{5}x_{6}^{5}x_{7}^{2}x_{8}^{5}x_{9}^{2}x_{10}^{2}$ \end{tabular} & \begin{tabular}{@{}l@{}} $31065r + 30039$ \\ $1764r + 9147$ \end{tabular}\\ \hline \\ 
$(1, 5, 4)$ & \begin{tabular}{@{}l@{}} $(0, 1, 1, 1, 1, 1, 2, 2, 2, 2)$ \end{tabular} & \begin{tabular}{@{}l@{}} 30 \end{tabular} & \begin{tabular}{@{}l@{}} $x_{2}^{4}x_{3}^{4}x_{4}^{4}x_{5}^{4}x_{6}^{4}x_{7}^{3}x_{8}^{3}x_{9}^{3}x_{10}$ \\ $x_{2}^{2}x_{3}^{4}x_{4}^{4}x_{5}^{4}x_{6}^{4}x_{7}^{3}x_{8}^{3}x_{9}^{3}x_{10}^{3}$ \end{tabular} & \begin{tabular}{@{}l@{}} $211692r + 376028$ \\ $78700r - 731650$ \end{tabular}\\ \hline \\ 
$(0, 10, 0)$ & \begin{tabular}{@{}l@{}} $(1, 1, 1, 1, 1, 1, 1, 1, 1, 1)$ \end{tabular} & \begin{tabular}{@{}l@{}} 60 \end{tabular} & \begin{tabular}{@{}l@{}} $x_{1}^{2}x_{2}^{2}x_{3}^{4}x_{4}^{7}x_{5}^{9}x_{7}^{9}x_{8}^{9}x_{9}^{9}x_{10}^{9}$ \\ $x_{1}^{2}x_{2}^{2}x_{3}^{5}x_{4}^{6}x_{5}^{9}x_{7}^{9}x_{8}^{9}x_{9}^{9}x_{10}^{9}$ \end{tabular} & \begin{tabular}{@{}l@{}} $-250444r - 433547$ \\ $1040607r + 497898$ \end{tabular}\\ \hline \\ 
$(0, 9, 1)$ & \begin{tabular}{@{}l@{}} $(1, 1, 1, 1, 1, 2, 1, 1, 1, 1)$ \end{tabular} & \begin{tabular}{@{}l@{}} 51 \end{tabular} & \begin{tabular}{@{}l@{}} $x_{2}^{2}x_{3}^{3}x_{4}^{6}x_{5}^{8}x_{7}^{8}x_{8}^{8}x_{9}^{8}x_{10}^{8}$ \\ $x_{2}^{2}x_{3}^{4}x_{4}^{5}x_{5}^{8}x_{7}^{8}x_{8}^{8}x_{9}^{8}x_{10}^{8}$ \end{tabular} & \begin{tabular}{@{}l@{}} $21957r - 14546$ \\ $37855r + 47642$ \end{tabular}\\ \hline \\ 
$(0, 8, 2)$ & \begin{tabular}{@{}l@{}} $(1, 1, 1, 1, 1, 1, 1, 1, 2, 2)$ \end{tabular} & \begin{tabular}{@{}l@{}} 44 \end{tabular} & \begin{tabular}{@{}l@{}} $x_{2}^{3}x_{3}^{4}x_{4}^{7}x_{5}^{7}x_{6}^{7}x_{7}^{7}x_{8}^{7}x_{9}x_{10}$ \\ $x_{2}^{3}x_{3}^{5}x_{4}^{6}x_{5}^{7}x_{6}^{7}x_{7}^{7}x_{8}^{7}x_{9}x_{10}$ \end{tabular} & \begin{tabular}{@{}l@{}} $1698588r + 406932$ \\ $201528r + 1213386$ \end{tabular}\\ \hline \\ 
$(0, 7, 3)$ & \begin{tabular}{@{}l@{}} $(2, 1, 1, 1, 1, 1, 1, 1, 2, 2)$ \end{tabular} & \begin{tabular}{@{}l@{}} 39 \end{tabular} & \begin{tabular}{@{}l@{}} $x_{1}x_{2}^{2}x_{3}^{2}x_{4}^{6}x_{5}^{6}x_{6}^{6}x_{7}^{6}x_{8}^{6}x_{9}^{2}x_{10}^{2}$ \\ $x_{1}x_{2}^{2}x_{3}^{3}x_{4}^{5}x_{5}^{6}x_{6}^{6}x_{7}^{6}x_{8}^{6}x_{9}^{2}x_{10}^{2}$ \end{tabular} & \begin{tabular}{@{}l@{}} $824103r + 258808$ \\ $89629r + 972050$ \end{tabular}\\ \hline \\ 
$(0, 6, 4)$ & \begin{tabular}{@{}l@{}} $(2, 1, 2, 1, 1, 1, 1, 1, 2, 2)$ \end{tabular} & \begin{tabular}{@{}l@{}} 38 \end{tabular} & \begin{tabular}{@{}l@{}} $x_{1}x_{2}^{3}x_{3}^{3}x_{4}^{5}x_{5}^{5}x_{6}^{5}x_{7}^{5}x_{8}^{5}x_{9}^{3}x_{10}^{3}$ \\ $x_{2}^{4}x_{3}^{3}x_{4}^{5}x_{5}^{5}x_{6}^{5}x_{7}^{5}x_{8}^{5}x_{9}^{3}x_{10}^{3}$ \end{tabular} & \begin{tabular}{@{}l@{}} $1390432r + 1536910$ \\ $-792953r - 108625$ \end{tabular}\\ \hline \\ 
$(0, 5, 5)$ & \begin{tabular}{@{}l@{}} $(1, 1, 1, 1, 1, 2, 2, 2, 2, 2)$ \end{tabular} & \begin{tabular}{@{}l@{}} 35 \end{tabular} & \begin{tabular}{@{}l@{}} $x_{1}x_{2}^{2}x_{3}^{4}x_{4}^{4}x_{5}^{4}x_{6}^{4}x_{7}^{4}x_{8}^{4}x_{9}^{4}x_{10}^{4}$ \\ $x_{1}^{2}x_{2}^{2}x_{3}^{3}x_{4}^{4}x_{5}^{4}x_{6}^{4}x_{7}^{4}x_{8}^{4}x_{9}^{4}x_{10}^{4}$ \end{tabular} & \begin{tabular}{@{}l@{}} $-710283r + 3022047$ \\ $2272296r + 657732$ \end{tabular}\\ \hline
\end{longtable}

\normalsize

\section{To Infinity and Back}\label{sec:infinity}
In this section we apply the method of \cite{CP20} (see also \cite{CDOR}) to generalize the sequenceability results here obtained to a more general class of semidirect products.

First, given a subset $S$ of a group $G$, we define the set
$$\Upsilon(S)=S\cup\Delta(S).$$
Then we can adapt~\cite[Lemma 2.1]{CP20} as
\begin{lem}\label{cambiogruppo}
Let $G_1$ and $G_2$ be groups.
Given a subset $S$ of $G_1\setminus \{0_{G_1}\}$ of size $k$, suppose there exists a homomorphism
$\phi: G_1\to G_2$ such that $\ker(\phi)\cap \Upsilon(S)=\emptyset$.
Then $\phi(S)$ is a subset of $G_2\setminus\{0_{G_2}\}$ of size $k$ and the subset $S$ is sequenceable whenever $\phi(S)$ is sequenceable.
\end{lem}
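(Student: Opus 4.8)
The plan is to pull a sequencing of $\phi(S)$ back along $\phi$ and verify that the resulting ordering of $S$ inherits the partial-sum distinctness it needs. First I would extract the two elementary consequences of $\ker(\phi)\cap\Upsilon(S)=\emptyset$. Since $S\subseteq\Upsilon(S)$, no element of $S$ is killed by $\phi$, so $\phi(S)\subseteq G_2\setminus\{0_{G_2}\}$. Since $\Delta(S)\subseteq\Upsilon(S)$ contains the quotient $a\cdot b^{-1}$ of any two distinct $a,b\in S$, disjointness from $\ker(\phi)$ forces $\phi$ to be injective on $S$: were $\phi(a)=\phi(b)$ with $a\ne b$, then $a\cdot b^{-1}\in\ker(\phi)\cap\Delta(S)$, a contradiction. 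In particular $|\phi(S)|=k$, which is the first assertion, and $\phi$ restricts to a bijection $\phi\colon S\to\phi(S)$ through which orderings of the two sets correspond.

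Next I would fix a linear or cyclic sequencing $\phi(x_1),\dots,\phi(x_k)$ of $\phi(S)$ and transport it to the ordering $x_1,\dots,x_k$ of $S$ under that bijection. Writing $s_j=x_1\cdots x_j$ and $t_j=\phi(x_1)\cdots\phi(x_j)$ for the partial sums in $G_1$ and $G_2$ respectively, the homomorphism property yields $t_j=\phi(s_j)$ for every $j$. The crucial point is that distinctness passes downward for free: if $t_i\ne t_j$ then $s_i\ne s_j$, simply because $\phi$ is a map. Note that the opposite implication, which could fail, is never invoked.

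Finally I would convert this into a sequencing of $S$ by a short case split, which is where the only real care is needed. If the sequencing of $\phi(S)$ is linear, all the $t_j$ are distinct, hence all the $s_j$ are distinct and $S$ has a linear sequencing. If it is cyclic, the $t_j$ are distinct except for $t_0=t_k=0_{G_2}$, so the downward transfer gives $s_i\ne s_j$ for every pair with $\{i,j\}\ne\{0,k\}$ and leaves only the endpoint pair open. Here one observes that both outcomes are acceptable: if $s_k=0_{G_1}=s_0$ the ordering is a cyclic sequencing of $S$, while if $s_k\ne 0_{G_1}$ then all partial sums are distinct and the ordering is in fact a linear sequencing of $S$. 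In either case $S$ is sequenceable, completing the proof. I anticipate no genuine obstacle; the only subtlety is recognizing that the undetermined endpoint pair in the cyclic case cannot prevent $S$ from being sequenceable.
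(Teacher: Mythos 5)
Your proof is correct and takes essentially the same approach as the paper: the paper gives no explicit proof of this lemma, deferring to the adaptation of \cite[Lemma 2.1]{CP20}, whose argument is precisely your scheme of using disjointness from $\ker(\phi)$ to make $\phi$ injective on $S$ and identity-avoiding on $\phi(S)$, then transporting an ordering through the bijection and noting that distinctness of partial sums passes downward since $t_j=\phi(s_j)$. Your endpoint case split in the cyclic case (yielding either a cyclic or a linear sequencing of $S$) is exactly the adaptation needed for the paper's linear-or-cyclic notion of sequenceability.
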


In the following, we consider groups of the form $(G\rtimes_{\varphi} H,\cdot)$ where $(G,+)$ is abelian, $H=\{h_0=0_H,h_1,\dots,h_{e-1}\}$ is finite, $(H,\cdot_H)$\footnote{Also in this section, when it is clear from the context, the index $H$ will be omitted.} is not necessarily abelian and where $\varphi$ is a group homomorphism $\varphi: H\rightarrow Aut(G)$. We will also assume that the homomorphism $\varphi$ satisfies the following property:
\begin{itemize}
\item[$(*)$]\  given $h\in H, \varphi(h)$ is either the identity or $\varphi(h)(x)=-x$ for any $x\in G$.
\end{itemize}
Note that the latter is an automorphism of $G$ because $G$ is abelian.
\begin{rem}
Two classes of semidirect product that satisfy property $(*)$ are the generalized dihedral groups and the standard direct product.
Indeed:
\begin{itemize}
\item[1)] The generalized dihedral group $Dih(G)$ can be defined as $G\rtimes_{\varphi} \Z_2$ where
$$\varphi(0)=id \mbox{ and }\varphi(1)(x)=-x.$$
\item[2)] Any direct product $G\times H$ can be defined as $G\rtimes_{\varphi} H$ where
$$\varphi(h)=id \mbox{ for any }h\in H.$$
\end{itemize}
\end{rem}
Given a multiset $S$ of elements of $G\rtimes_{\varphi} H$, we can define the {\em type} of~$S$ also when $G$ is a generic group. It is the sequence $\bm{\lambda} = (\lambda_0, \ldots, \lambda_{e-1} )$, where $\lambda_i$ is the number of times that $h_i$ appears in $\pi_2(S)$ and $\pi_2$ is the projection over $H$. Note that two multisets $S_1$ of $G_1\rtimes_{\varphi} H$ and $S_2$ of $G_2\rtimes_{\varphi} H$ can have the same type even though $G_1$ is not equal to $G_2$.

Given a semidirect product $G_1\rtimes_{\varphi} H$, we can define the corresponding product $G_2\rtimes_{\varphi'} H$ where $\varphi'(h)$ is the identity on $G_2$ (resp $\varphi'(h)(x)=-x$ for any $x\in G_2$) if and only if $\varphi(h)$ is the identity on $G_1$ (resp $\varphi(h)(x)=-x$ for any $x\in G_1$). For simplicity, we will denote, with abuse of notation, by $G_2\rtimes_{\varphi} H$ this semidirect product.
\begin{prop}\label{prop:Z}
Let $H$ be a finite group, let $\varphi$ satisfy property $(*)$, and suppose that, for infinitely many primes $p$, any subset of a given type $\bm{\lambda} = (\lambda_0, \ldots, \lambda_{e-1} )$ of $\Z_p\rtimes_{\varphi} H\setminus \{0_{\Z_p\rtimes_{\varphi}H}\}$ is sequenceable. Then also any subset of type $\bm{\lambda}$ of $\Z\rtimes_{\varphi} H\setminus \{0_{\Z\rtimes_{\varphi} H}\}$ is sequenceable.
\end{prop}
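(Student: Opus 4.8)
The plan is to transfer sequenceability from the finite groups $\Z_p \rtimes_\varphi H$ back to the infinite group $\Z \rtimes_\varphi H$ via the projection-type argument of Lemma \ref{cambiogruppo}. The key observation is that a subset $S$ of $\Z \rtimes_\varphi H$ has only finitely many elements, so its coordinates in the $\Z$-part are bounded, and likewise the finite set $\Upsilon(S) = S \cup \Delta(S)$ involves only finitely many integers. First I would fix a subset $S$ of type $\bm\lambda$ in $(\Z \rtimes_\varphi H) \setminus \{0\}$. For a prime $p$, consider the natural reduction map $\psi_p \colon \Z \rtimes_\varphi H \to \Z_p \rtimes_\varphi H$ that reduces the first coordinate modulo $p$ and fixes the second; this is a homomorphism precisely because $\varphi$ (acting by either $\mathrm{id}$ or $x \mapsto -x$ under property $(*)$) commutes with reduction mod $p$, so the twisted multiplication is respected.

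Next I would show that for all sufficiently large primes $p$, the kernel condition $\ker(\psi_p) \cap \Upsilon(S) = \emptyset$ of Lemma \ref{cambiogruppo} holds. The kernel of $\psi_p$ consists of elements whose first coordinate is divisible by $p$ and whose second coordinate is $0_H$. Since $\Upsilon(S)$ is a finite set, there are only finitely many nonzero integers appearing as first coordinates of its elements with trivial $H$-part; choosing $p$ larger than the absolute value of every such integer guarantees that no nonzero element of $\Upsilon(S)$ lands in $\ker(\psi_p)$. I would also need $\psi_p$ to be injective on $S$ so that $\psi_p(S)$ has the same size $k$ and the same type $\bm\lambda$; this follows from the same largeness condition on $p$, since $\Delta(S) \subseteq \Upsilon(S)$ encodes the pairwise differences, and avoiding the kernel on these differences forces distinctness of images. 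Because the hypothesis gives sequenceability for infinitely many primes, I can pick one such prime $p$ that is also large enough to satisfy these finitely many constraints.

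With such a $p$ chosen, $\psi_p(S)$ is a subset of $(\Z_p \rtimes_\varphi H) \setminus \{0\}$ of type $\bm\lambda$, hence sequenceable by hypothesis, and Lemma \ref{cambiogruppo} then lifts this to sequenceability of $S$ itself. The main obstacle I anticipate is verifying carefully that reduction modulo $p$ is genuinely a group homomorphism in the twisted setting and that the type is preserved: one must confirm that property $(*)$ is exactly what makes the diagram commute, since a general automorphism of $\Z$ need not descend compatibly to $\Z_p$, whereas the maps $\pm \mathrm{id}$ do. A secondary technical point is ensuring that the ``corresponding'' semidirect product $\Z_p \rtimes_\varphi H$ (defined via the abuse of notation preceding the statement) matches the image of $\psi_p$, so that the hypothesis applies to exactly the right group; this is a matter of checking that the sign pattern of $\varphi$ on $H$ is identical in both the $\Z$ and $\Z_p$ settings, which is immediate from the definition of the corresponding product.
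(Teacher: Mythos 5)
Your proposal is correct and follows essentially the same route as the paper: reduce the $\Z$-coordinate modulo a prime $p$ that both exceeds $\max_{z\in\Upsilon(S)}|\pi_{\Z}(z)|$ and lies in the infinite set of primes from the hypothesis, verify that property $(*)$ makes this reduction a homomorphism of the twisted products preserving the type, and conclude via Lemma \ref{cambiogruppo}. The paper's proof is exactly this argument, with the kernel-avoidance and type-preservation checks stated in the same way.
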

\begin{proof}
Consider a subset $S$ of $\Z\rtimes_{\varphi} H\setminus \{0_{\Z\rtimes_{\varphi} H}\}$ of type $\bm{\lambda}$. Let $p>\max\limits_{z\in \Upsilon(S)} |\pi_{\Z}(z)|$ be a prime such that any subset of $\Z_p\rtimes_{\varphi} H\setminus \{0_{\Z_p\rtimes_{\varphi} H}\}$ of type $\bm{\lambda}$ is sequenceable.

Here we note that, since $\Z\rtimes_{\varphi} H$ and $\Z_p\rtimes_{\varphi} H$ are defined with corresponding maps $\varphi$, the map $\pi_p\times id: \Z\rtimes_{\varphi} H\rightarrow \Z_p\rtimes_{\varphi} H$ is a group homomorphism.
Indeed in this case we have that
$$(\pi_p\times id)(x_1,k_1)\cdot (\pi_p\times id)(x_2,k_2)=(x_1 \pmod{p},k_1)\cdot (x_2\pmod{p},k_2)=$$
$$=((x_1 + \varphi(k_1)(x_2))\pmod{p} ,k_1\cdot_H k_2)=(\pi_p\times id)[(x_1,k_1)\cdot (x_2,k_2)].$$
Then $\Upsilon(S)$ and the kernel of the map $\pi_p\times id: \Z\rtimes_{\varphi} H\to \Z_p\rtimes_{\varphi} H$ are disjoint sets.
Therefore, since $\pi_p\times id$ is the identity on $H$ and $S$ is of type $\bm{\lambda}$, we have $(\pi_p\times id)(S)$ also is of type $\bm{\lambda}$. It then follows from Lemma~\ref{cambiogruppo} that~$S$ also is sequenceable.
\end{proof}
\begin{rem}
In the previous proposition, we can see why property $(*)$ is needed in this discussion. Indeed we need that $\varphi(a)$ is a group automorphism of $\Z$ and hence it maps $1$ in a generator of $\Z$. It follows that $\varphi(a)(1)\in \{1,-1\}$ or, equivalently, $\varphi$ satisfies property $(*)$.
\end{rem}

We now consider the free abelian group $\Z^n$ of rank $n$. We obtain the following proposition whose proof is omitted since it is the same as Proposition 4.3 of \cite{CDOR}.

\begin{prop}\label{prop:Zn}
Let $H$ be a finite group, let $\varphi$ satisfy property $(*)$, and suppose that any subset of a given type $\bm{\lambda} = (\lambda_0, \ldots, \lambda_{e-1} )$ of $\Z\rtimes_{\varphi} H\setminus\{0_{\Z^n\rtimes_{\varphi} H}\}$ is sequenceable.
Then also any subset of type $\bm{\lambda}$ of $\Z^n\rtimes_{\varphi} H\setminus\{0_{\Z^n\rtimes_{\varphi} H}\}$ (for any $n\geq 2$) is sequenceable.
\end{prop}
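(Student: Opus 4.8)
The plan is to mimic the proof of Proposition~\ref{prop:Z}, replacing the reduction map $\pi_p \times id$ there with a surjection $\Z^n \rtimes_\varphi H \to \Z \rtimes_\varphi H$ induced by an integer linear functional on $\Z^n$, and then to invoke Lemma~\ref{cambiogruppo}. Concretely, for a vector $\bm{c} = (c_1, \ldots, c_n) \in \Z^n$ I would consider the additive map $\psi_{\bm{c}} : \Z^n \to \Z$ given by $\psi_{\bm{c}}(v) = \sum_{i=1}^n c_i v_i$ and set $\phi = \psi_{\bm{c}} \times id : \Z^n \rtimes_\varphi H \to \Z \rtimes_\varphi H$, where both semidirect products are built with corresponding maps $\varphi$ as in the paragraph preceding Proposition~\ref{prop:Z}.

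First I would check that $\phi$ is a group homomorphism. Exactly as in the displayed computation of Proposition~\ref{prop:Z}, this reduces to the compatibility identity $\psi_{\bm{c}}(\varphi(h)(x)) = \varphi(h)(\psi_{\bm{c}}(x))$ for all $h \in H$ and $x \in \Z^n$. Since property~$(*)$ forces $\varphi(h)$ to be either the identity or the map $x \mapsto -x$, and since $\psi_{\bm{c}}$ is additive and hence satisfies $\psi_{\bm{c}}(-x) = -\psi_{\bm{c}}(x)$, the two sides agree in both cases. Because $\phi$ acts as the identity on the $H$-component, $\phi(S)$ has the same type $\bm{\lambda}$ as $S$, which is precisely what is needed to apply the standing hypothesis.

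The heart of the argument is choosing $\bm{c}$ so that $\ker(\phi) \cap \Upsilon(S) = \emptyset$. Since $\ker(\phi) = \ker(\psi_{\bm{c}}) \times \{0_H\}$, only those elements of the finite set $\Upsilon(S)$ whose $H$-component equals $0_H$ can possibly lie in the kernel; write their $\Z^n$-components as $v_1, \dots, v_m$, each of which is nonzero because $\Upsilon(S) = S \cup \Delta(S)$ contains no identity element. For each $v_j$ the equation $\langle \bm{c}, v_j \rangle = 0$ cuts out a proper rational hyperplane in the space of coefficient vectors, so the task is to pick an integer point $\bm{c}$ lying outside the finite union of these hyperplanes. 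This is possible because the product $\prod_{j=1}^m \langle \bm{c}, v_j\rangle$ is a nonzero polynomial in the variables $c_1, \dots, c_n$ over the infinite field $\mathbb{Q}$, hence does not vanish identically and takes a nonzero value at some integer $\bm{c}$; equivalently one may take $\bm{c} = (1, M, M^2, \dots, M^{n-1})$ with $M$ larger than every $|(v_j)_i|$ to rule out cancellation. For such a $\bm{c}$ each $v_j$ satisfies $\langle \bm{c}, v_j\rangle \neq 0$, so no element of $\Upsilon(S)$ lies in $\ker(\phi)$.

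Finally I would assemble the pieces: Lemma~\ref{cambiogruppo} then guarantees that $\phi(S)$ is a size-$k$ subset of $(\Z \rtimes_\varphi H) \setminus \{0\}$, of type $\bm{\lambda}$, and that $S$ is sequenceable whenever $\phi(S)$ is; the latter holds by the assumption that every type-$\bm{\lambda}$ subset of $\Z \rtimes_\varphi H$ is sequenceable. The only genuinely delicate point is the simultaneous avoidance of all finitely many hyperplanes by a single integer functional, together with the verification that this functional keeps $\phi$ compatible with $\varphi$; both are handled above, and neither is a real obstacle, which is presumably why the authors omit the argument as identical to its counterpart in~\cite{CDOR}.
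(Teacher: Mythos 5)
Your proof is correct and is essentially the argument the paper intends: the paper omits the proof as identical to Proposition~4.3 of \cite{CDOR}, which likewise reduces from $\Z^n\rtimes_{\varphi}H$ to $\Z\rtimes_{\varphi}H$ via a homomorphism of the form $\psi\times id$ with $\psi:\Z^n\to\Z$ a linear (base-$M$ evaluation) map, exactly your $\bm{c}=(1,M,\dots,M^{n-1})$, chosen so that $\ker(\psi\times id)$ avoids the finite set $\Upsilon(S)$, followed by an application of Lemma~\ref{cambiogruppo}. Your checks that property $(*)$ makes $\psi\times id$ a homomorphism and that the type $\bm{\lambda}$ is preserved are the same compatibility verifications carried out in the proof of Proposition~\ref{prop:Z}.
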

From the previous proposition, we deduce this result. Also in this case we omit the proof since it is the same as Theorem 4.4 of \cite{CDOR}

\begin{thm}\label{torsionfree}
Let $H$ be a finite group, let $\varphi$ satisfy property $(*)$, and suppose that, for infinitely many primes $p$, any subset of a given type $\bm{\lambda} = (\lambda_0, \ldots, \lambda_{e-1} )$ of $\Z_p\rtimes_{\varphi} H\setminus\{0_{\Z_p\rtimes_{\varphi} H}\}$ is sequenceable. Then also any subset of $G\rtimes_{\varphi} H\setminus\{0_{G\rtimes_{\varphi} H}\}$ of type $\bm{\lambda}$ is sequenceable provided that $G$ is torsion-free and abelian.
\end{thm}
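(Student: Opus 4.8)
The plan is to reduce to the free abelian case already treated in Propositions~\ref{prop:Z} and~\ref{prop:Zn} by localising to the finitely generated subgroup of $G$ spanned by the first coordinates of $S$. Fix a subset $S\subseteq (G\rtimes_{\varphi} H)\setminus\{0_{G\rtimes_{\varphi} H}\}$ of type $\bm{\lambda}$, write $\pi_1\colon G\rtimes_{\varphi} H\to G$ for the projection onto the first coordinate, and set $G_0=\langle \pi_1(S)\rangle\leq G$. As $S$ is finite, $G_0$ is finitely generated; as $G$ is torsion-free and abelian, $G_0$ is a finitely generated torsion-free abelian group, hence $G_0\cong\Z^n$ for some integer $n\geq 0$.

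Because property~$(*)$ forces each $\varphi(h)$ to be either $id$ or the map $x\mapsto -x$, every subgroup of $G$—in particular $G_0$—is $\varphi$-invariant, so $G_0\rtimes_{\varphi} H$ is a subgroup of $G\rtimes_{\varphi} H$ containing $S$. Any group isomorphism $\theta\colon G_0\xrightarrow{\sim}\Z^n$ satisfies $\theta(-x)=-\theta(x)$, so it intertwines $id$ with $id$ and negation with negation; hence $\theta\times \mathrm{id}_H$ is an isomorphism $G_0\rtimes_{\varphi} H\cong \Z^n\rtimes_{\varphi} H$ onto the correspondingly defined semidirect product, and it carries $S$ to a subset of the same type $\bm{\lambda}$ since the $H$-coordinates are untouched. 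The hypothesis that type-$\bm{\lambda}$ subsets of $\Z_p\rtimes_{\varphi} H\setminus\{0\}$ are sequenceable for infinitely many primes $p$ gives, by Proposition~\ref{prop:Z}, that every type-$\bm{\lambda}$ subset of $\Z\rtimes_{\varphi} H$ is sequenceable, and then Proposition~\ref{prop:Zn} promotes this to $\Z^n\rtimes_{\varphi} H$ for all $n\geq 2$ (for $n=1$ this is already Proposition~\ref{prop:Z}, and for $n=0$ one embeds $\{0\}\rtimes_{\varphi} H$ into $\Z\rtimes_{\varphi} H$ and applies Proposition~\ref{prop:Z} once more). Transporting the resulting sequencing of the image of $S$ back along $\theta\times\mathrm{id}_H$ produces a sequencing of $S$ inside $G_0\rtimes_{\varphi} H$, and since all of its partial sums lie in the subgroup $G_0\rtimes_{\varphi} H$ and stay pairwise distinct (with the cyclic exception $s_0=s_k$ permitted) when read in $G\rtimes_{\varphi} H$, this is already a sequencing of $S$ in $G\rtimes_{\varphi} H$.

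The step that requires care—and the reason one cannot simply invoke Lemma~\ref{cambiogruppo} with a map $G\rtimes_{\varphi} H\to\Z^n\rtimes_{\varphi} H$—is that a torsion-free abelian group need not admit any nonzero homomorphism to a free abelian group, the standard obstruction being $\mathrm{Hom}(\mathbb{Q},\Z^n)=0$. I therefore work with the inclusion of the finitely generated subgroup $G_0$ rather than a projection of $G$, relying on the elementary fact that sequencings pass upward through subgroup inclusions. The only genuinely verification-demanding points are the $\varphi$-invariance of $G_0$ and the preservation of the type under the identification $G_0\rtimes_{\varphi} H\cong\Z^n\rtimes_{\varphi} H$, both of which are immediate consequences of property~$(*)$; these are exactly the places where the argument of Theorem~4.4 of~\cite{CDOR} is adapted from the direct-product setting to the present one.
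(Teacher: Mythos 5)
Your proof is correct and follows essentially the same route as the paper, which omits the argument because it is the adaptation of Theorem~4.4 of~\cite{CDOR}: restrict to the finitely generated (hence $\cong\Z^n$, and $\varphi$-invariant by property~$(*)$) subgroup generated by the first coordinates of $S$, and then invoke Propositions~\ref{prop:Z} and~\ref{prop:Zn} before lifting the sequencing back through the subgroup inclusion. Your added remarks on why Lemma~\ref{cambiogruppo} cannot be applied directly (e.g.\ $\mathrm{Hom}(\mathbb{Q},\Z^n)=0$) and on the low-rank cases $n\in\{0,1\}$ are accurate but not a departure from the intended proof.
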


\begin{cor}
Let $H$ be a finite group, let $\varphi$ satisfy property $(*)$, $k$ be a positive integer, and suppose that, for infinitely many primes $p$, any subset of size $k$ of $\Z_p\rtimes_{\varphi} H\setminus\{0_{\Z_p\rtimes_{\varphi} H}\}$ is sequenceable. Then also any subset of size $k$ of $G\rtimes_{\varphi} H\setminus \{0_{G\rtimes_{\varphi} H}\}$ is sequenceable provided that $G$ is torsion-free and abelian.
\end{cor}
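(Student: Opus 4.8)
The plan is to obtain the corollary as an immediate specialization of Theorem~\ref{torsionfree}, organizing subsets by their type. Write $e=|H|$, so that the type of any subset is a vector $\bm{\lambda}=(\lambda_0,\ldots,\lambda_{e-1})$ of nonnegative integers, and a subset has size $k$ exactly when $\sum_{i=0}^{e-1}\lambda_i=k$.

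First I would fix an arbitrary type $\bm{\lambda}$ with $\sum_i\lambda_i=k$ and check that the hypothesis of Theorem~\ref{torsionfree} holds for it. This is automatic: any subset of $\Z_p\rtimes_{\varphi}H\setminus\{0_{\Z_p\rtimes_{\varphi}H}\}$ of type $\bm{\lambda}$ is in particular a subset of size $k$, so the infinitely many primes $p$ furnished by the corollary's hypothesis—for which \emph{every} size-$k$ subset of $\Z_p\rtimes_{\varphi}H\setminus\{0_{\Z_p\rtimes_{\varphi}H}\}$ is sequenceable—are in particular primes for which every subset of type $\bm{\lambda}$ is sequenceable. Since $G$ is torsion-free and abelian, Theorem~\ref{torsionfree} then gives that every subset of $G\rtimes_{\varphi}H\setminus\{0_{G\rtimes_{\varphi}H}\}$ of type $\bm{\lambda}$ is sequenceable.

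To conclude, I would take an arbitrary size-$k$ subset $S$ of $G\rtimes_{\varphi}H\setminus\{0_{G\rtimes_{\varphi}H}\}$, let $\bm{\lambda}$ be its type (so that $\sum_i\lambda_i=k$), and apply the previous paragraph to this $\bm{\lambda}$; this shows that $S$ is sequenceable, and since $S$ was arbitrary the corollary follows.

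There is no substantive obstacle here—the content lies entirely in Theorem~\ref{torsionfree}—but the one point worth stating carefully is the order of quantifiers. Theorem~\ref{torsionfree} requires, for each fixed type, its own infinite set of good primes, and a priori one might fear having to intersect such sets across the (finitely many) types with $\sum_i\lambda_i=k$. This is avoided because the corollary hands us a single infinite set of primes that works for all size-$k$ subsets at once, hence for every relevant type simultaneously; so the same primes serve each type and no intersection is needed.
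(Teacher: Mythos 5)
Your proof is correct and is precisely the argument the paper intends (the corollary is stated without proof as an immediate consequence of Theorem~\ref{torsionfree}): partition size-$k$ subsets by type and apply the theorem to each type, noting that the single infinite set of primes from the hypothesis works for every type with $\sum_i\lambda_i=k$ simultaneously. Your remark on the order of quantifiers is exactly the right point to make explicit, and it is resolved correctly.
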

Given an element $g$ of an abelian group $G$, we denote by $o(g)$ the cardinality of the cyclic subgroup $\langle g
\rangle$ generated by $g$. Furthermore, we set
$$\vartheta(G)=\min_{0_G\neq g \in G} o(g).$$

As a consequence of Theorem \ref{torsionfree}, and with essentially the same proof of Theorem 4.6 of \cite{CDOR}, we can prove the following result.
\begin{thm}\label{thm:as}
Let $H$ be a finite group, let $\varphi$ satisfy property $(*)$, and suppose that, for infinitely many primes $p$, any subset of $\Z_p\rtimes_{\varphi} H\setminus \{0_{\Z_p\rtimes_{\varphi} H}\}$ of a given type $\bm{\lambda} = (\lambda_0, \ldots, \lambda_{e-1} )$ is sequenceable.
Then there exists a positive integer $N(\bm{\lambda})$ such that any subset of type $\bm{\lambda}$ of $G\rtimes_{\varphi} H\setminus\{0_{G\rtimes_{\varphi} H}\}$ is sequenceable provided that $G$ is abelian and $\vartheta(G)>N(\bm{\lambda})$.
\end{thm}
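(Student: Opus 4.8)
The plan is to deduce the general abelian case from the torsion-free case already established in Theorem~\ref{torsionfree}, transporting sequencings along group homomorphisms by means of Lemma~\ref{cambiogruppo}. Write $\pi_1$ for the projection of $G\rtimes_\varphi H$ onto its $G$-component. First I would reduce to a finitely generated group: since $\Upsilon(S)=S\cup\Delta(S)$ is finite, the subgroup $G_0=\langle\pi_1(\Upsilon(S))\rangle\leq G$ is finitely generated, contains the $G$-component of every element governing a collision among partial sums, and satisfies $S\subseteq G_0\rtimes_\varphi H$ and $\vartheta(G_0)\geq\vartheta(G)>N(\bm{\lambda})$. As any sequencing of $S$ computed inside $G_0\rtimes_\varphi H$ is also one inside $G\rtimes_\varphi H$, I may assume $G=G_0$, so that $G\cong\Z^{r}\oplus T$ with $T$ finite. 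The hypothesis $\vartheta(G)>N(\bm{\lambda})$ then guarantees that every nonzero element of $G$ has order exceeding $N(\bm{\lambda})$; in particular every prime dividing $|T|$ exceeds $N(\bm{\lambda})$, since a prime factor $q$ of some element's order produces an element of order exactly $q$.

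I would define the threshold from a crude count of the obstruction set. With $k=\sum_i\lambda_i$ we have $|S|=k$ and $|\Delta(S)|\leq k(k-1)$, so $|\Upsilon(S)|\leq k^{2}=:N(\bm{\lambda})$. For a homomorphism $\psi\colon G\to A$ of abelian groups, property $(*)$ ensures that $\psi$ intertwines the two copies of $\varphi$ (because $\psi(-x)=-\psi(x)$), so $\psi\times\mathrm{id}_H\colon G\rtimes_\varphi H\to A\rtimes_\varphi H$ is a type-preserving homomorphism whose kernel can only meet $\Upsilon(S)$ inside the identity coset $A\times\{0_H\}$. Thus it suffices to choose a target $A$ for which type-$\bm{\lambda}$ subsets of $A\rtimes_\varphi H$ are already sequenceable --- a torsion-free abelian group via Theorem~\ref{torsionfree}, or $\Z_p$ for one of the infinitely many primes $p$ supplied by the hypothesis --- together with a homomorphism $\psi$ that does not vanish on any of the at most $N(\bm{\lambda})$ nonzero elements of $\pi_1(\Upsilon(S))$ lying in the identity $H$-coset, a set I denote $D$. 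Lemma~\ref{cambiogruppo} then lifts a sequencing of $\psi(S)$ back to the desired sequencing of $S$.

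Constructing $\psi$ is the technical core, and I would proceed exactly as in the proof of Proposition~\ref{prop:Z}, separating the free and torsion contributions of $D\subseteq G\setminus\{0_G\}$. Forbidden elements with nonzero $\Z^{r}$-component are cleared by composing the projection $G\to\Z^{r}$ with a sufficiently generic linear functional into $\Z$, landing in a torsion-free group where Theorem~\ref{torsionfree} applies and where a large enough prime reduction behaves as in Proposition~\ref{prop:Z}. The forbidden elements that are \emph{purely torsion} cannot survive any map to a torsion-free group, and must instead be kept nonzero inside a cyclic quotient of $T$: here one selects a prime $p$ from the hypothesis that is compatible with the orders occurring in $T$ and a character $T\to\Z_p$ nonvanishing on these elements, the inequality $|D|\leq N(\bm{\lambda})<\vartheta(G)$ being what allows finitely many nonvanishing conditions on elements of large order to be met simultaneously.

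I expect the purely-torsion forbidden elements to be the main obstacle. A homomorphism into a torsion-free group annihilates every torsion element, while a homomorphism into $\Z_p$ annihilates every torsion element of order coprime to $p$; separating $D$ from $0_G$ therefore forces a careful matching between the primes dividing $\exp(T)$ and the set of primes furnished by the hypothesis, and it is precisely to render this matching feasible that $\vartheta(G)$ must exceed the combinatorial threshold $N(\bm{\lambda})$. Making the choice of $N(\bm{\lambda})$ explicit so that the required character (or functional) always exists is the delicate step, and it is where the argument runs parallel to the proof of Theorem~4.6 of \cite{CDOR}.
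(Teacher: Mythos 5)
Your strategy---push $S$ forward along a homomorphism $\psi\times\mathrm{id}_H$ into a known-good group and pull a sequencing back via Lemma~\ref{cambiogruppo}---is not the paper's argument, and it contains a gap that cannot be repaired within that framework. The difficulty you flag in your last paragraph (matching the primes dividing $\exp(T)$ with the primes supplied by the hypothesis) is not a delicate step to be finessed: the required matching can simply fail to exist. The hypothesis only furnishes \emph{some} infinite set $P$ of primes for which $\Z_p\rtimes_{\varphi}H$ behaves well (in the $G_{3p}$ application, for instance, $P$ contains only primes $\equiv 1 \pmod{6}$), and nothing prevents taking $G=\Z_q$ for an arbitrarily large prime $q\notin P$. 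Such a $G$ satisfies $\vartheta(G)=q>N(\bm{\lambda})$ for any proposed $N(\bm{\lambda})$, so it is squarely covered by the statement; but every homomorphism from $\Z_q$ into a torsion-free abelian group, or into $\Z_p$ with $p\in P$ (so $p\neq q$), is trivial, hence its kernel contains the forbidden set $D$, which is nonempty as soon as some element of $S$ lies over $0_H$ or two elements of $S$ share an $H$-coset. More generally, a homomorphism can keep a torsion element $d$ of order $n$ alive only if the target has torsion at some prime dividing $n$, and all such primes exceed $N(\bm{\lambda})$; since $P$ is outside your control, no choice of $N(\bm{\lambda})$ rescues the construction of $\psi$. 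So the ``technical core'' of your proposal cannot be carried out, and the theorem is left unproved for exactly the groups that make it interesting.

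The paper's proof (it is that of Theorem 4.6 of \cite{CDOR}, adapted as in the proof of Theorem~\ref{thm:explicitUB}, which the paper does spell out) transfers data in the opposite direction: from a hypothetical counterexample in $G\rtimes_{\varphi}H$ down to $\mathbb{Q}\rtimes_{\varphi}H$, with no homomorphism out of $G$ at all. If $S$ of type $\bm{\lambda}$ is not sequenceable, then every ordering of $S$ exhibits a vanishing partial product, and by property $(*)$ each such collision is a homogeneous linear equation in the $G$-components with coefficients in $\{-1,0,1\}$; a choice of one collision per permutation is a ``pattern'', and there are only finitely many patterns, each with an associated integer matrix $M$ whose square submatrices have determinants of absolute value at most $k!$. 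Taking $N(\bm{\lambda})$ beyond these determinant bounds, Cramer's rule (Theorem~\ref{Cramer} and Remark~\ref{linarcombi}) shows that none of the linear forms required to be nonzero (the forms $x_i-x_j$ for equal cosets, and $x_i$ over $0_H$) can lie in the rational row span of $M$: otherwise an integer multiple $D\ell$ with $0<|D|\leq k!<\vartheta(G)$ of such a form $\ell$ would vanish on the solution in $G$, forcing $\ell$ itself to vanish there, a contradiction. Since $\mathbb{Q}$ is infinite, the rational solution space of the pattern therefore avoids the union of these hyperplanes, yielding a non-sequenceable subset of type $\bm{\lambda}$ in $\mathbb{Q}\rtimes_{\varphi}H$ and contradicting Theorem~\ref{torsionfree}. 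This reversal is precisely what makes torsion of $G$ at primes outside $P$ harmless; your first reduction steps (passing to $\langle\pi_1(\Upsilon(S))\rangle$, and the observation that $(*)$ makes $\psi\times\mathrm{id}_H$ a type-preserving homomorphism) are correct but feed a strategy that cannot reach the conclusion.
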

\begin{cor}
Let $H$ be a finite group, let $\varphi$ satisfy property $(*)$, $k$ be a positive integer, and suppose that, for infinitely many primes $p$, any subset of size $k$ of $\Z_p\rtimes_{\varphi} H\setminus\{0_{\Z_p\rtimes_{\varphi} H}\}$ is sequenceable. Then, there exists a positive integer $N(k)$ such that any subset of size $k$ of $G\rtimes_{\varphi} H\setminus\{0_{G\rtimes_{\varphi} H}\}$ is sequenceable provided that $G$ is abelian and $\vartheta(G)>N(k)$.
\end{cor}
Therefore, as a consequence of Theorem \ref{th:main}, and recalling that the only exceptions to that theorem, in case of dihedral groups and $k\leq 12$, are for $k=4$ and $p=3$ (see Theorem 3.4 of \cite{Ollis}), we can prove our first asymptotic result.
\begin{thm}\label{th:as1}
Let $G$ be a group such that $\vartheta(G)>N(k)$.
Then any subset $S$ of $Dih(G)$ such that $|S|\leq 12$ is sequenceable.
\end{thm}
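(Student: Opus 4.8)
The plan is to recognize $Dih(G)$ as the semidirect product $G\rtimes_\varphi\Z_2$ with $\varphi(0)=id$ and $\varphi(1)(x)=-x$, and then transfer the finite-prime sequenceability of the dihedral groups $D_{2p}$ recorded in Theorem~\ref{th:main} to an arbitrary abelian $G$ with large $\vartheta(G)$ via the corollary following Theorem~\ref{thm:as}. Since $x\mapsto -x$ is an automorphism of $G$ precisely when $G$ is abelian, $G$ is abelian throughout, so the standing hypotheses of that corollary are in force. I would begin by observing that this $\varphi$ satisfies property~$(*)$, as $\varphi(0)$ is the identity and $\varphi(1)$ is negation; this is the only structural condition the transfer results impose on $\varphi$.

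Next I would fix a size $k$ with $1\le k\le 12$ and check the hypothesis of the size-$k$ corollary to Theorem~\ref{thm:as}. That hypothesis asks for infinitely many primes $p$ for which every size-$k$ subset of $\Z_p\rtimes_\varphi\Z_2=D_{2p}$ is sequenceable. This is supplied directly by part~(2) of Theorem~\ref{th:main}: taking $e=2$, every subset of size $k\le 12$ of $D_{2p}$ is sequenceable for each prime $p>3$, and there are infinitely many such primes. Applying the corollary then yields an integer $N(k)$ such that every size-$k$ subset of $Dih(G)=G\rtimes_\varphi\Z_2$ is sequenceable whenever $G$ is abelian and $\vartheta(G)>N(k)$.

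To obtain a single threshold valid for all admissible sizes at once, I would set $N:=\max_{1\le k\le 12}N(k)$ and assume $\vartheta(G)>N$; this is the intended reading of the hypothesis $\vartheta(G)>N(k)$ in the statement. Then for any subset $S\subseteq Dih(G)\setminus\{0\}$ with $|S|=k\le 12$ we have $\vartheta(G)>N\ge N(k)$, so the corollary makes $S$ sequenceable, completing the argument.

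The step needing the most care—though it is not a genuine obstacle—is confirming that part~(2) of Theorem~\ref{th:main} feeds infinitely many usable primes into the transfer for \emph{every} target size $k\le 12$. The sole failure of Theorem~\ref{th:main} for dihedral groups in this range is the isolated case $k=4$, $p=3$ (Theorem~3.4 of~\cite{Ollis}); since the corollary consumes only primes $p>3$, of which infinitely many remain, this small exceptional prime never participates and does not affect the conclusion.
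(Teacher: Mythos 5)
Your proposal is correct and follows essentially the same route as the paper: the paper also obtains Theorem~\ref{th:as1} by feeding part~(2) of Theorem~\ref{th:main} (which covers $D_{2p}$ for all primes $p>3$ and $k\leq 12$, the only exception being $k=4$, $p=3$, which is irrelevant since infinitely many primes $p>3$ remain) into the size-$k$ corollary of Theorem~\ref{thm:as}, using that $Dih(G)=G\rtimes_{\varphi}\Z_2$ satisfies property~$(*)$. Your explicit handling of the dependence of $N(k)$ on $k$ by taking a maximum over $k\leq 12$ is a reasonable reading of the statement and matches the paper's intent.
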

\begin{rem}\label{rem:linear2}
Because of Remark \ref{rem:linear1}, with essentially the same proof of Theorem \ref{th:as1}, we can prove that if $S$ is a subset of $Dih(G)$ not of type $(k,0)$, $k\leq 12$ and $\vartheta(G)>N(k)$, then $S$ admits a linear sequencing.

This implies that, under the hypothesis of Theorem \ref{th:as1}, $S$ admits a linear sequencing unless $s_k$ is unavoidably the identity.
\end{rem}

\subsection{An Explicit Upper Bound}

Now, in case $G=\mathbb{Z}_m$, and adapting again the procedure of \cite{CDOR}, we provide an explicit upper bound on the numbers $N(\bm{\lambda})$ and $N(k)$.
First, we need to recall some elementary facts about the solutions of a linear system over fields and commutative rings.
\begin{thm}[Corollary of Cramer's Theorem]\label{Cramer}
Let
\begin{equation}\label{sys1}\begin{cases}
m_{1,1}x_1+\dots+ m_{1,k}x_k=b_1\\
\dots\\
m_{l,1}x_1+\dots+ m_{l,k}x_k=b_l
\end{cases} \end{equation}
be a linear system over a commutative ring $R$ whose associated matrix $M=(m_{i,j})$ is an $l\times k$ matrix over $R$. Let $(x_1,\dots, x_k)$ be a solution of the system and let $M'$ be a $k'\times k'$ square, nonsingular (i.e.~$\det(M')$ is invertible in $R$), submatrix of $M$. We assume, without loss of generality, that $M'$ is obtained by considering the first $k'$ rows and the first $k'$ columns of $M$. Then $x_1,\dots, x_k$ satisfy the following relation

$$\begin{bmatrix}
x_{1} \\
x_{2} \\
\vdots \\
x_{k'}
\end{bmatrix} = \begin{bmatrix}
m_{1,1} & m_{1,2} & m_{1,3} & \dots & m_{1,k'} \\
m_{2,1} & m_{2,2} & m_{2,3} & \dots & m_{2,k'} \\
\vdots & \vdots & \vdots & & \vdots \\
m_{k',1} & m_{k',2} & m_{k',3} & \dots & m_{k',k'}
\end{bmatrix} ^{-1} \begin{bmatrix}
b_1- (m_{1,k'+1}x_{k'+1}+\dots + m_{1,k}x_{k})\\
b_2- (m_{2,k'+1}x_{k'+1}+\dots+ + m_{2,k}x_{k})\\
\vdots \\
b_{k'}- (m_{k',k'+1}x_{k'+1}+\dots+ m_{k',k}x_{k})
\end{bmatrix} .$$

Equivalently, set $$ M'_j:=\begin{bmatrix}
m_{1,1} & m_{1,2} &\dots & m_{1,j-1} & b_1- (m_{1,k'+1}x_{k'+1}+\dots + m_{1,k}x_{k}) & \dots & m_{1,k'} \\
m_{2,1} & m_{2,2} &\dots & m_{2,j-1} & b_2- (m_{2,k'+1}x_{k'+1}+\dots+ + m_{2,k}x_{k}) & \dots & m_{2,k'} \\
\vdots & \vdots & \vdots & \vdots & \vdots & & \vdots \\
m_{k',1} & m_{k',2} &\dots & m_{k',j-1}& b_{k'}- (m_{k',k'+1}x_{k'+1}+\dots+ m_{k',k}x_{k})
& \dots & m_{k',k'}
\end{bmatrix}$$
we have that, for any $j\in [1,k']$
\begin{equation}\label{Cramer2}x_j= \frac{\det(M'_j)}{\det(M')}.\end{equation}
Now assume that $R$ is a field and that the system \eqref{sys1} admits at least one solution. Then, given a maximal square nonsingular submatrix $M'$ of $M$, and fixed $x_{k'+1},\dots, x_k$ in $R$, we have that $x_1, x_2, \dots, x_k$ are a solution of the system \eqref{sys1} if and only if $x_1,\dots, x_{k'}$ satisfy the relation \eqref{Cramer2}.
\end{thm}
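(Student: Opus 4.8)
The plan is to treat the two assertions separately. For the first (valid over any commutative ring $R$), the idea is to isolate the first $k'$ equations and regard $x_{k'+1},\dots,x_k$ as parameters, reducing to a square system governed by the invertible matrix $M'$. After the harmless permutation of rows and columns of $M$ already invoked in the statement — which only relabels the equations and the unknowns and leaves both \eqref{sys1} and \eqref{Cramer2} unchanged — I may assume $M'$ occupies the top-left corner. Writing $\tilde b_i = b_i - \sum_{j=k'+1}^{k} m_{i,j} x_j$ for $i=1,\dots,k'$, the first $k'$ equations of \eqref{sys1} become the single matrix equation $M' (x_1,\dots,x_{k'})^{\mathsf T} = (\tilde b_1,\dots,\tilde b_{k'})^{\mathsf T}$.

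First I would recall that, since $\det(M')$ is a unit of $R$, the adjugate identity $\mathrm{adj}(M')\,M' = \det(M')\,I$ shows that $M'$ is invertible over $R$ with $M'^{-1} = \det(M')^{-1}\,\mathrm{adj}(M')$. Left-multiplying the square system by $M'^{-1}$ gives exactly the matrix relation in the statement. To pass to the determinantal form \eqref{Cramer2}, I would instead multiply the square system by $\mathrm{adj}(M')$ to obtain $\det(M')\,x_j = \big(\mathrm{adj}(M')\,\tilde b\big)_j$, and then observe that cofactor expansion along the replaced column identifies the right-hand side with $\det(M'_j)$; dividing by the unit $\det(M')$ yields \eqref{Cramer2}. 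All of this is the standard proof of Cramer's rule over a commutative ring and requires no new ideas.

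For the second assertion I would take $R$ to be a field, assume \eqref{sys1} is consistent, and let $M'$ be a maximal nonsingular square submatrix, so that $k' = \mathrm{rank}(M)$. The forward implication is immediate: any genuine solution satisfies the first $k'$ equations in particular, so the first part already forces \eqref{Cramer2}. The converse is the crux of the whole statement. Fix $x_{k'+1},\dots,x_k$ arbitrarily and define $x_1,\dots,x_{k'}$ by \eqref{Cramer2}; by construction these satisfy the first $k'$ equations. What must be shown is that the remaining equations $i=k'+1,\dots,l$ hold automatically. Here I would combine maximality with consistency: because $M'$ is nonsingular of size $k'=\mathrm{rank}(M)$, the first $k'$ rows of $M$ are linearly independent and hence span the row space, so each later row of $M$ is a combination of them; and because the system is consistent, $\mathrm{rank}([M \mid b]) = \mathrm{rank}(M) = k'$, so the very same combination also reproduces $b_i$ from $b_1,\dots,b_{k'}$. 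Writing row $i$ of the augmented matrix as $\sum_{s=1}^{k'} c_s$ times row $s$ and substituting into the already-satisfied first $k'$ equations then gives $\sum_j m_{i,j} x_j = \sum_s c_s b_s = b_i$, as required.

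I expect the reverse implication of the field case to be the only real obstacle, and within it the essential point is that consistency is precisely what guarantees that the row dependence among the coefficient rows lifts to the augmented rows; without it, satisfying the first $k'$ equations would not force the rest. Everything else reduces to the classical adjugate formula and the characterization of the rank as the largest order of a nonvanishing minor.
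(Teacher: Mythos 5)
Your proof is correct, but there is nothing in the paper to compare it against: the paper states this result without any proof, recalling it as an elementary fact (hence the name ``Corollary of Cramer's Theorem''), and immediately moves on to Remark~\ref{linarcombi}. What you have written is precisely the standard argument the paper implicitly relies on. The commutative-ring half is routine: invertibility of $\det(M')$ plus the adjugate identity $\mathrm{adj}(M')\,M'=\det(M')\,I$ yield both the matrix relation and the determinantal form \eqref{Cramer2}. The field half is where the content lies, and you identify it correctly: maximality of $M'$ gives $k'=\mathrm{rank}(M)$, the independence of the first $k'$ full rows of $M$ follows from the nonsingularity of $M'$, and consistency (Rouch\'e--Capelli) guarantees that every later row of the augmented matrix lies in the span of the first $k'$ augmented rows, so the remaining equations hold automatically. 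The only step you compress is the phrase ``the very same combination also reproduces $b_i$'': to justify it, write row $i$ of $[M\mid b]$ as a combination of the first $k'$ augmented rows (possible since these are independent and $\mathrm{rank}([M\mid b])=k'$), restrict to the $M$-part, and invoke uniqueness of the representation of row $i$ of $M$ in terms of the independent first $k'$ rows to identify the coefficients; this is a one-line addition, and with it your proof is complete.
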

\begin{rem}\label{linarcombi}
In case the system \eqref{sys1} of Theorem \ref{Cramer} is homogeneous (i.e. $b_1=b_2=\dots=b_l=0$), due to the Laplace expansion of the determinant, $\det(M'_j)$ can be written as a linear combination of $x_{k'+1}, \dots, x_{k}$. More precisely, denoting by $M'_{i,j}$ the matrix obtained by deleting the $i$-th row and the $j$-th column of $M'$, we have that
\begin{equation}\label{linearcombieq}\det(M'_j)=\sum_{i=1}^{k'} (-1)^{j+i+1} \\det(M'_{i,j})(m_{i,k'+1}x_{k'+1}+\dots + m_{i,k}x_{k}).\end{equation}
\end{rem}
Now we are ready to provide, in case of groups of type $\mathbb{Z}_m\rtimes_{\varphi} H$ (where $\varphi$ satisfies property $(*)$), a quantitative version of Theorem \ref{thm:as}.
In the following proof, developing the notations of the previous sections, given a set $S$ of size $k$ and an ordering $\bm{x}=(x_1,\dots,x_k)$ of $S$, we denote by $s_i(\bm{x})$ the partial sum $x_1\cdot x_2\dots x_{i-1}\cdot x_i$ where $s_0(\bm{x})$ is always zero.
\begin{thm}\label{thm:explicitUB}
Let $H$ be a finite group, let $\varphi$ satisfy property $(*)$, and suppose that, for infinitely many primes $p$, any subset of a given type $\bm{\lambda} = (\lambda_0, \ldots, \lambda_{e-1} )$ of $\Z_p\rtimes_{\varphi} H\setminus \{0_{\Z_p\rtimes_{\varphi} H}\}$ is sequenceable. Then also any subset of type $\bm{\lambda}$ of $\mathbb{Z}_m\rtimes_{\varphi} H\setminus\{0_{\mathbb{Z}_m\rtimes_{\varphi} H}\}$ is sequenceable provided that the prime factors of $m$ are all greater than $k!$ where $k=\lambda_0+\lambda_1+\dots+\lambda_{e-1}$.
\end{thm}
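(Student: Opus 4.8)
The plan is to adapt the explicit-bound argument of \cite{CDOR} to the present semidirect-product setting, using Theorem~\ref{torsionfree} as the qualitative input and Theorem~\ref{Cramer} together with Remark~\ref{linarcombi} to quantify the exceptional primes. Throughout I fix a subset $S$ of $\mathbb{Z}_m\rtimes_\varphi H\setminus\{0\}$ of type $\bm{\lambda}$ with $k=\lambda_0+\dots+\lambda_{e-1}$, and I choose an integer lift $\widehat S\subseteq \mathbb{Z}\rtimes_\varphi H$ of $S$, obtained by replacing each $\mathbb{Z}_m$-component by an integer representative. Since the hypothesis holds for infinitely many primes, Theorem~\ref{torsionfree} applies to the torsion-free abelian group $\mathbb{Z}$ and guarantees that $\widehat S$ is sequenceable over $\mathbb{Z}\rtimes_\varphi H$; I fix such a sequencing, recording its quotient sequencing $\bm{a}=(a_1,\dots,a_k)$ and its integer values $\widehat{\bm x}=(\widehat x_1,\dots,\widehat x_k)$. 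Because $\varphi$ satisfies property $(*)$, every linear form occurring in \eqref{pol1} has all of its coefficients in $\{0,\pm1\}$; consequently the reduction of this ordering modulo $m$ fails to be a sequencing of $S$ exactly when one of the finitely many collision forms $L_t(\widehat{\bm x})$ — which are nonzero integers — is divisible by some prime factor of $m$.

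The heart of the matter is therefore to show that a good ordering can always be selected so that no such divisibility is forced once every prime factor of $m$ exceeds $k!$. Following \cite{CDOR}, I would encode the existence of a valid ordering for the fixed quotient sequencing $\bm{a}$ as a solvability question for a linear system whose unknowns are the $\mathbb{Z}_m$-components and whose coefficient matrix $M$, by property $(*)$, has all entries in $\{0,\pm1\}$. Theorem~\ref{Cramer} then expresses the constrained components through ratios $\det(M'_j)/\det(M')$ of minors of $M$, and Remark~\ref{linarcombi} rewrites the numerators as $\{0,\pm1\}$-combinations of the free components. The decisive observation is purely numerical: any square submatrix of $M$ has order at most $k$ and entries in $\{0,\pm1\}$, so by the Leibniz expansion its determinant is a sum of at most $k!$ terms each equal to $0$ or $\pm1$; hence every minor that appears satisfies $|\det|\le k!$.

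With this bound in hand the transfer is immediate. The hypothesis guarantees the relevant non-degeneracy over infinitely many primes, so the governing minors are nonzero as integers, and being bounded in absolute value by $k!$ they are nonzero modulo every prime $p>k!$. Thus for each prime factor $p$ of $m$ the determinant $\det(M')$ is nonzero modulo $p$; since $p\nmid\det(M')$ it is invertible modulo every power of $p$ dividing $m$, and hence, by the Chinese Remainder Theorem, a unit in $\mathbb{Z}_m$. Feeding this back into Theorem~\ref{Cramer}, now read over the commutative ring $\mathbb{Z}_m$, produces a consistent assignment of the $\mathbb{Z}_m$-components realizing the quotient sequencing $\bm{a}$, that is, a sequencing of $S$ in $\mathbb{Z}_m\rtimes_\varphi H$. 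Equivalently, one reduces modulo each prime factor, applies the field case, and glues the solutions along the unit determinants via the Chinese Remainder Theorem.

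The step I expect to be the main obstacle is the \emph{set-up} rather than the arithmetic: one must phrase ``the chosen ordering is a sequencing'' as a single linear system over $\mathbb{Z}_m$ whose \emph{solvability} — including the collision-avoidance conditions, which are non-vanishing requirements rather than equalities, and the within-block distinctness of the components — is genuinely controlled by nonsingularity of order-$\le k$ submatrices with entries in $\{0,\pm1\}$. A related delicacy is that reduction modulo a prime factor $p$ of $m$ need not be injective on the blocks of $S$, so the reduced configuration need not itself be of type $\bm{\lambda}$; the point is that the condition $p>k!$ is exactly what prevents the determinants governing the construction from degenerating, so the argument must be organised around those determinants (as in \cite{CDOR}) and not around the reduced sets. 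Once the system is correctly identified, the bound $|\det|\le k!$ and the Chinese Remainder Theorem finish the proof with no further estimates.
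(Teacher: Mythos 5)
Your ingredients are the right ones (Theorem~\ref{torsionfree} applied to a torsion-free group, Theorem~\ref{Cramer}, Remark~\ref{linarcombi}, and the bound $k!$ on $\{0,\pm1\}$-minors), but the logical architecture of your argument does not work, and the obstacle you flag at the end is not a technicality of the set-up --- it is the point where your direction of transfer breaks down irreparably. A sequencing of $S$ is an ordering of the \emph{given} elements of $S$; the $\mathbb{Z}_m$-components are fixed data, not unknowns. Consequently ``$S$ is sequenceable'' is not the solvability of any linear system: it is the assertion that for \emph{some} permutation, finitely many $\{0,\pm1\}$-linear forms are \emph{nonzero} at a prescribed point. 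There is therefore no freedom for Theorem~\ref{Cramer} over $\mathbb{Z}_m$ to ``produce a consistent assignment of the $\mathbb{Z}_m$-components'': nothing is being assigned. Likewise the sentence ``the hypothesis guarantees the relevant non-degeneracy, so the governing minors are nonzero as integers'' has no referent, because the system $M$ it alludes to is never defined. Your opening idea (reduce a $\mathbb{Z}$-sequencing of a lift $\widehat S$ modulo $m$) fails for exactly the reason you yourself give: the nonzero integer values of the collision forms depend on the lifted representatives, are not bounded by $k!$, and so can perfectly well be divisible by primes larger than $k!$.

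The paper's proof runs in the opposite direction, and this reversal is precisely what turns non-vanishing requirements into genuine equations so that the linear algebra applies. Suppose, for contradiction, that $S\subseteq \mathbb{Z}_m\rtimes_{\varphi} H$ of type $\bm{\lambda}$ is \emph{not} sequenceable, with all prime factors of $m$ greater than $k!$. Then for \emph{every} permutation $\omega\in\Sym(k)$ at least one collision occurs, i.e.\ one linear \emph{equation} with coefficients in $\{0,\pm1\}$ (by property $(*)$) is satisfied by the given components of $S$. Choosing one such equation per permutation yields a $(k!)\times k$ homogeneous system $M$ whose solution set over $\mathbb{Z}_m$ contains the components of $S$. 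Since every square minor of $M$ is a sum of at most $k!$ terms equal to $\pm1$, a square submatrix is nonsingular over $\mathbb{Z}_m$ if and only if it is nonsingular over $\mathbb{Q}$; Theorem~\ref{Cramer} and Remark~\ref{linarcombi} then allow one to construct a \emph{rational} solution of the same system that moreover satisfies the finitely many distinctness and non-identity inequations (these inequations are not identically zero on the rational solution space, because the $\mathbb{Z}_m$-point satisfies them and the coefficients of their restrictions are again integers of absolute value at most $k!$). This produces a subset of $\mathbb{Q}\rtimes_{\varphi} H\setminus\{0\}$ of type $\bm{\lambda}$ that is not sequenceable, contradicting Theorem~\ref{torsionfree} for the torsion-free abelian group $\mathbb{Q}$. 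In short: non-sequenceability transfers \emph{up} from $\mathbb{Z}_m$ to $\mathbb{Q}$ through its collision equations, whereas sequenceability does not transfer \emph{down} from $\mathbb{Z}$ to $\mathbb{Z}_m$, which is what you attempted; repairing your proposal amounts to rewriting it as the contradiction argument above.
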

\begin{proof}
Let us assume, by contradiction, that there exists a subset $S=\{x_1,\dots,x_k\}$ of type $\bm{\lambda}$ in $\mathbb{Z}_m\rtimes_{\varphi} H\setminus\{0_{\mathbb{Z}_m\rtimes_{\varphi} H}\}$ that is not sequenceable and all the prime factors of $m$ are larger than $k!$. Here $S$ would be a set of type $\bm{\lambda}$ whose elements do not admit an ordering with different partial sums.

This implies that for any possible permutation $\omega\in \Sym(k)$, considered the ordering $$\bm{x}_{\omega}=(x_{\omega(1)},x_{\omega(2)},\dots,x_{\omega(k)}),$$
there exist $h$ and $t$, with $h,t \in [0,k]$, $\{h,t\}\not=\{0,k\}$ and $h\not=t$, such that $s_{h}(\bm{x}_\omega)=s_{t}(\bm{x}_\omega)$, that is, assuming $h<t$, \begin{equation}\label{system} x_{\omega(h+1)}\cdot x_{\omega(h+2)}\dots x_{\omega(t-1)} \cdot x_{\omega(t)}=0.\end{equation}
Therefore we would have a solution $(\pi_{\mathbb{Z}_m}(x_1),\dots,\pi_{\mathbb{Z}_m}(x_k))^T$ to the system of equations derived from \eqref{system} by considering the projection over $\mathbb{Z}_m$.
In the following, for simplicity, we will denote $\pi_{\mathbb{Z}_m}(x_j)$ by $x_j^1$ and, using this notation, we are considering the system:
\begin{equation}\label{system2} x^1_{\omega(h+1)}+ \varphi_{\pi_{H}(x_{\omega(h+1)})}x^1_{\omega(h+2)}+\dots + \varphi_{\pi_{H}(x_{\omega(h+1)})\cdots \pi_{H}(x_{\omega(t-1)})}x^1_{\omega(t)}=0\end{equation}
where, using the notation of the previous sections, and since $\varphi$ satisfies property $(*)$, we set $\varphi(a)(x)=\varphi_a x$ for some constant $\varphi_a\in \{1,-1\}$.
Denoted by $M$ the $(k!)\times k$ matrix of the coefficients of this system, we note that, since $\varphi$ satisfies property $(*)$, the coefficients of $M$ belong to $\{-1,0,1\}$. It follows that the determinant of any square submatrix of $M$ (that is big at most $k\times k$) is a sum of at most $k!$ terms that belong to $\pm 1$. Therefore, if all the prime factors of $m$ are larger than $k!$, a square submatrix $M'$ of $M$ is nonsingular in $\mathbb{Z}_{m}$ if and only if it is nonsingular in $\mathbb{Q}$.

Following the proof of Thorem 4.12 of \cite{CDOR} and assuming the  existence a subset $S=\{x_1,\dots,x_k\}$ of type $\bm{\lambda}$ in $\mathbb{Z}_m\rtimes_{\varphi} H\setminus\{0_{\mathbb{Z}_m\rtimes_{\varphi} H}\}$ that is not sequenceable, we find a subset of the same type in $\mathbb{Q}\rtimes_{\varphi} H\setminus\{0_{\mathbb{Q}\rtimes_{\varphi} H}\}$ that is also not sequenceable. However, because of {\rm Theorem \ref{torsionfree}} and since $\mathbb{Q}$ is a torsion-free abelian group, any subset $\widetilde{S}$ of type $\bm{\lambda}$ of $ G\rtimes_{\varphi} H\setminus\{0_{G\rtimes_{\varphi} H}\}$ is sequenceable. But this is a contradiction and hence the thesis is verified.
\end{proof}
\begin{cor}
Let $H$ be a finite group, let $\varphi$ satisfy property $(*)$, $k$ be a positive integer, and suppose that, for infinitely many primes $p$, any subset of size $k$ of $\Z_p\rtimes_{\varphi} H\setminus\{0_{\Z_p\rtimes_{\varphi} H}\}$ is sequenceable.
Then also any subset of size $k$ of $\mathbb{Z}_m\rtimes_{\varphi} H\setminus\{0_{\mathbb{Z}_m\rtimes_{\varphi} H}\}$ is sequenceable provided that the prime factors of $m$ are all greater than $k!$.
\end{cor}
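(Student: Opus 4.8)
The plan is to deduce this statement directly from the per-type version established in Theorem~\ref{thm:explicitUB}. The key observation is that any subset of size $k$ of $\Z_p\rtimes_{\varphi} H\setminus\{0_{\Z_p\rtimes_{\varphi}H}\}$ has some type $\bm{\lambda}=(\lambda_0,\ldots,\lambda_{e-1})$ with $\lambda_0+\cdots+\lambda_{e-1}=k$, and that there are only finitely many such types (one for each way of writing $k$ as an ordered sum of $e$ nonnegative integers). First I would note that the hypothesis ``every subset of size $k$ of $\Z_p\rtimes_{\varphi} H$ is sequenceable'' is logically equivalent to the conjunction, over all types $\bm{\lambda}$ with $\lambda_0+\cdots+\lambda_{e-1}=k$, of the statement ``every subset of type $\bm{\lambda}$ of $\Z_p\rtimes_{\varphi}H$ is sequenceable''. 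Hence, fixing any one such type $\bm{\lambda}$, the hypothesis of Theorem~\ref{thm:explicitUB} is satisfied for that $\bm{\lambda}$ for the same infinite set of primes~$p$.

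The second step is to apply Theorem~\ref{thm:explicitUB} to each type $\bm{\lambda}$ with $\lambda_0+\cdots+\lambda_{e-1}=k$ separately. For a fixed $\bm{\lambda}$, the theorem guarantees that every subset of type $\bm{\lambda}$ of $\mathbb{Z}_m\rtimes_{\varphi} H\setminus\{0_{\mathbb{Z}_m\rtimes_{\varphi}H}\}$ is sequenceable as soon as every prime factor of $m$ exceeds $k!$. Here the essential point, and really the only thing that needs to be verified, is that the threshold $k!$ depends only on $k=\lambda_0+\cdots+\lambda_{e-1}$ and not on the finer data of the type $\bm{\lambda}$; this is visible from the statement and proof of Theorem~\ref{thm:explicitUB}, where the bound arises from the size of the determinants of the square submatrices of the $(k!)\times k$ coefficient matrix $M$ and is therefore a function of $k$ alone. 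Consequently, under the single uniform hypothesis that all prime factors of $m$ exceed $k!$, every type with $\lambda_0+\cdots+\lambda_{e-1}=k$ is covered simultaneously.

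Finally, given an arbitrary subset $S$ of size $k$ of $\mathbb{Z}_m\rtimes_{\varphi} H\setminus\{0_{\mathbb{Z}_m\rtimes_{\varphi}H}\}$, I would read off its type $\bm{\lambda}$, which necessarily satisfies $\lambda_0+\cdots+\lambda_{e-1}=k$, and invoke the conclusion of the previous step for that particular $\bm{\lambda}$ to deduce that $S$ is sequenceable. I do not expect any genuine obstacle here beyond bookkeeping: the substantive content is entirely contained in Theorem~\ref{thm:explicitUB}, and the corollary amounts to the observation that a constraint on size decomposes into finitely many constraints on type, all governed by the same $k$-dependent threshold $k!$.
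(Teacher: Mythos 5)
Your proposal is correct and is essentially the paper's own argument: the corollary is stated there without proof precisely because it follows immediately from Theorem~\ref{thm:explicitUB} by decomposing the size-$k$ hypothesis into the finitely many types $\bm{\lambda}$ with $\lambda_0+\cdots+\lambda_{e-1}=k$ and observing that the threshold $k!$ depends only on $k$, not on $\bm{\lambda}$. Your bookkeeping (equivalence of the size-$k$ hypothesis with the conjunction of the per-type hypotheses, then applying the theorem type by type) is exactly the intended deduction.
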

If we consider direct products of type $\mathbb{Z}_m\times H$, we can improve the result of Theorem \ref{thm:explicitUB}. Indeed, with essentially the same proof of Theorem 22 of \cite{CDOR}, we can state the following.
\begin{thm}
Let $H$ be a finite group, $k$ be a positive integer, and suppose that, for infinitely many primes $p$, any subset of size $k$ of $\Z_p\times H\setminus\{0_{\Z_p\times H}\}$ is sequenceable.
Then also any subset of size $k$ of $\mathbb{Z}_m\times H\setminus\{0_{\mathbb{Z}_m\times H}\}$ is sequenceable provided that the prime factors of $m$ are all greater than $k!/2$.
\end{thm}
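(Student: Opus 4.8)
The plan is to run the argument of Theorem~\ref{thm:explicitUB} essentially verbatim, the only new ingredient being a sharper bound on the determinants of the relevant integer matrices that exploits the direct-product structure. Assume, for contradiction, that there is a subset $S=\{x_1,\dots,x_k\}$ of type $\bm{\lambda}$ in $\mathbb{Z}_m\times H\setminus\{0\}$ that is not sequenceable, with every prime factor of $m$ larger than $k!/2$. Since $\mathbb{Z}_m\times H$ is a direct product, it is the semidirect product with $\varphi(h)=\mathrm{id}$ for every $h\in H$, so property $(*)$ holds with every constant $\varphi_a$ equal to $1$. Writing $x^1_j=\pi_{\mathbb{Z}_m}(x_j)$, non-sequenceability means that for each $\omega\in\Sym(k)$ there is a bad pair $h<t$, and the corresponding instance of Equation~\eqref{system2} degenerates to the plain interval sum
$$x^1_{\omega(h+1)}+x^1_{\omega(h+2)}+\dots+x^1_{\omega(t)}=0.$$
Collecting one such equation per permutation gives a homogeneous system with a $(k!)\times k$ coefficient matrix $M$ whose entries now lie in $\{0,1\}$ rather than in $\{-1,0,1\}$.

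The one substantive step is to improve the determinant estimate. Let $M'$ be any $k'\times k'$ submatrix of $M$, with $k'\le k$. Expanding by the Leibniz formula, $\det(M')=\sum_{\sigma}\mathrm{sgn}(\sigma)\prod_i m'_{i,\sigma(i)}$, and because all entries are in $\{0,1\}$ each product $\prod_i m'_{i,\sigma(i)}$ is either $0$ or $1$. Hence $\det(M')=E-O$, where $E$ (resp.\ $O$) counts the even (resp.\ odd) permutations whose product equals $1$. For $k'\ge 2$ there are exactly $k'!/2$ even and $k'!/2$ odd permutations, so $0\le E,O\le k'!/2$ and therefore $|\det(M')|\le k'!/2\le k!/2$; for $k'=1$ we have trivially $|\det(M')|\le 1\le k!/2$ (the case $k=1$ being vacuous). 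Consequently, if every prime factor $p$ of $m$ satisfies $p>k!/2$, then $0\ne|\det(M')|\le k!/2<p$ forces $p\nmid\det(M')$, so that a square submatrix of $M$ is nonsingular over $\mathbb{Z}_m$ if and only if it is nonsingular over $\mathbb{Q}$. This is precisely the transfer property that drove the proof of Theorem~\ref{thm:explicitUB}, now available under the weaker hypothesis $p>k!/2$. Note that the cruder bound in Theorem~\ref{thm:explicitUB} is unavoidable there, since the $\pm1$ entries permit all Leibniz terms to align in sign, whereas the nonnegativity of $\{0,1\}$ entries forces the cancellation between $E$ and $O$.

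With this transfer in hand, the remainder of the argument is identical to that of Theorem~\ref{thm:explicitUB}. Choosing a maximal nonsingular submatrix $M'$ of $M$ and invoking Theorem~\ref{Cramer} together with Remark~\ref{linarcombi}, one expresses the dependent coordinates as fixed rational-linear functions of the free ones; since $M'$ is simultaneously nonsingular over $\mathbb{Z}_m$ and over $\mathbb{Q}$, a choice of free coordinates in $\mathbb{Z}$ lifting the $\mathbb{Z}_m$-solution produces a rational solution of $M\widetilde{\bm{x}}=0$. Retaining the $H$-components of $S$ unchanged, this yields a subset $\widetilde{S}$ of type $\bm{\lambda}$ in $\mathbb{Q}\times H\setminus\{0\}$: for every $\omega$ the designated interval sum vanishes over $\mathbb{Q}$ while the matching of $H$-coordinates is inherited from $S$, so the bad pair $h<t$ survives and $\widetilde{S}$ is again not sequenceable. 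But $\mathbb{Q}$ is a torsion-free abelian group and $\varphi=\mathrm{id}$ satisfies property $(*)$, so Theorem~\ref{torsionfree}, applied to the type $\bm{\lambda}$ of $S$, guarantees that every subset of type $\bm{\lambda}$ of $\mathbb{Q}\times H\setminus\{0\}$ is sequenceable, a contradiction.

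I expect the genuinely new content to be confined to the determinant bound of the second paragraph, which is elementary; the main delicacy lies in the lifting step, where one must ensure that the rational solution determines an honest subset $\widetilde{S}$ of the prescribed type, with distinct non-identity elements, and that each per-permutation bad pair persists over $\mathbb{Q}$ (the $H$-coordinate matching being automatic, since the $H$-parts are retained). These are exactly the points settled in the proof of Theorem~22 of~\cite{CDOR}, and the passage from $\{-1,0,1\}$ to $\{0,1\}$ entries affects none of them, so that portion can be carried over without change.
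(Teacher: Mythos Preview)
Your proposal is correct and matches the paper's intended argument: the paper does not give a self-contained proof but simply states that it follows ``with essentially the same proof of Theorem 22 of \cite{CDOR}'', and your write-up is precisely that argument---rerunning Theorem~\ref{thm:explicitUB} with the single modification that the direct-product hypothesis forces all $\varphi_a=1$, so the coefficient matrix has $\{0,1\}$ entries and the Leibniz expansion yields $|\det(M')|\le k'!/2$ rather than $k'!$. The remaining lifting to $\mathbb{Q}\times H$ and the appeal to Theorem~\ref{torsionfree} are handled exactly as you indicate, by reference to \cite{CDOR}.
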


As regards dihedral groups, as a consequence of Theorem \ref{th:main}, we can prove Theorem \ref{th:infty}.
\begin{thm}\label{th:infty}
Let $m$ be such that all its prime factors are larger than $k!$.
Then any subset $S$ of $D_{2m}$ such that $|S|\leq 12$ is sequenceable.
\end{thm}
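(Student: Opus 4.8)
The plan is to obtain Theorem~\ref{th:infty} as a direct application of the corollary that follows Theorem~\ref{thm:explicitUB}, instantiated with $H=\Z_2$, using the dihedral case of Theorem~\ref{th:main} to supply the required prime-order input. The point is that all the genuine work --- the Combinatorial Nullstellensatz computations --- has already been carried out in establishing Theorem~\ref{th:main}, so here I would only need to verify that the hypotheses of the lifting corollary are in place and then quote it.

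First I would observe, via Definition~\ref{df:Dihedral}, that $D_{2n}=\Z_n\rtimes_{\varphi}\Z_2$ with $\varphi(0)=id$ and $\varphi(1)(x)=-x$. Since $\varphi(1)$ is precisely negation on the abelian group $\Z_n$, the action $\varphi$ satisfies property~$(*)$, and $H=\Z_2$ is finite; thus the entire apparatus of Section~\ref{sec:infinity} applies. Under this identification $\Z_p\rtimes_{\varphi}\Z_2$ is literally $D_{2p}$ and $\Z_m\rtimes_{\varphi}\Z_2$ is literally $D_{2m}$, with identical action data on the two $\Z$-pieces, so the correspondence between the prime-order and composite-order semidirect products used in that section is here simply the identity on the action data and needs no adjustment.

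Next, for each fixed $k\leq 12$ I would invoke case~(2) of Theorem~\ref{th:main}: for every prime $p>3$, every subset of size $k$ of $D_{2p}=\Z_p\rtimes_{\varphi}\Z_2$ is sequenceable. Because there are infinitely many primes $p>3$, this furnishes exactly the hypothesis of the corollary following Theorem~\ref{thm:explicitUB}, namely that for infinitely many primes $p$ every size-$k$ subset of $\Z_p\rtimes_{\varphi}\Z_2\setminus\{0\}$ is sequenceable. Applying that corollary then yields that every size-$k$ subset of $\Z_m\rtimes_{\varphi}\Z_2\setminus\{0\}=D_{2m}\setminus\{0\}$ is sequenceable whenever all prime factors of $m$ exceed $k!$. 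Letting $k$ run through $1,\dots,12$ assembles the full statement.

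The only thing to check --- rather than a genuine obstacle --- is that the three hypotheses of the corollary are all satisfied simultaneously: property~$(*)$ for the dihedral action, the finiteness of $H=\Z_2$, and the infinitude of the primes $p>3$ supplying the prime-order base case. Each is immediate once Theorem~\ref{th:main} is available. I would also note, in passing, that because Theorem~\ref{th:main} is stated for subsets of every size $k\leq 12$ (and hence for every type $\bm{\lambda}$ of that size), the size-$k$ version of the corollary applies without having to treat the types individually, so no new calculation is required in this section.
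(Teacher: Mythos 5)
Your proposal is correct and follows essentially the same route as the paper: the paper states Theorem~\ref{th:infty} as an immediate consequence of Theorem~\ref{th:main} (case~(2), giving the prime-order input for all $p>3$ and every $k\leq 12$) combined with the lifting machinery of Theorem~\ref{thm:explicitUB} and its size-$k$ corollary, instantiated with $H=\Z_2$ and the dihedral action, which satisfies property~$(*)$. Your verification of the hypotheses (property~$(*)$, finiteness of $\Z_2$, infinitude of primes $p>3$, and the identification of the corresponding semidirect products with $D_{2p}$ and $D_{2m}$) is exactly the content the paper leaves implicit.
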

\begin{rem}\label{rem:linear3}
Because of Remark \ref{rem:linear1}, with essentially the same proof of Theorem \ref{th:infty}, we can prove that if $S$ is a subset of $D_{2m}$ not of type $(k,0)$, $k\leq 12$ and all the prime factors of $m$ are larger than $k!$, then $S$ admits a linear sequencing.

This implies that, under the hypothesis of Theorem \ref{th:infty}, $S$ admits a linear sequencing unless $s_k$ is unavoidably the identity.
\end{rem}
\section{Weak Sequenceability}\label{sec:weak}
In this section we consider again subsets $S$ of a group $(G,\cdot)$ of the form $G=\Z_p\rtimes_{\varphi}H$ which is the semidirect product of $(\mathbb{Z}_p,+)$ and another group $(H,\cdot)$. As  in the previous section we indicate by $x\cdot a$ (or by $y\cdot b)$, where $x\in \Z_p$ and $a\in H$, a generic element of $G$.
We investigate the $t$-weak sequenceability of subsets of semidirect products.
Recall that in addition to requiring that $x_i-x_j\not=0$ for $1\leq i<j\leq k$ and $a_i=a_j$, we seek an ordering having no two of its partial sums $s_i, s_j$ equal for $1\leq i<j\leq k$ and $|i-j|\leq t$. Hence, modifying the expression of Equation \eqref{pol1}, we define, for $t<k$, the following polynomial
\begin{equation}\label{pol2}q_{\bm{a}} = \prod_{\substack{1 \leq i < j \leq k \\ a_i = a_j }} (x_j - x_i)
\prod_{\substack{0 \leq i < j \leq k \\ b_i = b_j \\ j-i\leq t \\ j \neq i+1}} (x_{i+1}+ \varphi_{a_{i+1}}x_{i+2}+\varphi_{a_{i+1}\cdot a_{i+2}}x_{i+3}+\dots+\varphi_{a_{i+1}\dots a_{j-1}}x_j).\end{equation}

In this case we have that a set $S=\{x_1\cdot a_1, x_2\cdot a_2, \ldots, x_k\cdot a_k \}\subseteq G\setminus \{0\}$ of size $k$ is $t$-weak sequenceable if there exists an ordering $\bm{x_a} = \left( x_1\cdot a_1, x_2\cdot a_2, \ldots, x_k\cdot a_k \right)$ of its elements such that $q_{\bm{a}}(x_1,\dots, x_k)\not=0$.

Now, given a set $S=\{x_1\cdot a_1, x_2\cdot a_2, \ldots, x_k\cdot a_k \}$ of $k$ elements, the idea is to fix, {\em a priori}, the first $h$ elements $(x_1\cdot a_1, x_2\cdot a_2, \ldots, x_h\cdot a_h)$, where $h$ is not too big, of the ordering in such a way that no of its partial sums $s_i, s_j$ are equal when $|i-j|\leq t$ and $1\leq i<j\leq h$. This can be expressed by requiring that $q_{\bm{a}'}(x_1,\cdots,x_h)\not=0$ where $\bm{a}'=(a_1,\dots,a_h)$ and we show that this can be done due the following Proposition whose proof is inspired by that of Proposition 2.2 of \cite{CD}.
\begin{prop}\label{fix}
Let $S=\{x_1\cdot a_1, x_2\cdot a_2, \ldots, x_k\cdot a_k \}\subseteq G\setminus \{0\}$ be a set of size $k$ and let $h$ and $t$ be positive integers such that $k-h\geq 2(t-1)$ and $h\geq t-1$. Denoted by $ \bar{a}$ the element that appears the most in $\pi_2(S)$, we assume that at least $2(t-1)$ elements of $S$ have $\bar{a}$ as second coordinate. Then there exists an ordering of $h$-elements of $S$ that we denote, up to relabeling, with $(x_1\cdot a_1, x_2\cdot a_2, \ldots, x_h\cdot a_h)$, such that
\begin{itemize}
\item[(a)] $q_{\bm{a}'}(x_1,\cdots,x_h)\not=0$ where $\bm{a}'=(a_1,\dots,a_h)$;
\item[(b)] $a_{h},a_{h-1},\dots, a_{h-t+2}$ are all equals to $\bar{a}$;
\item[(c)] at least $t-1$ elements of $\{x_{h+1}\cdot a_{h+1}, \ldots, x_k\cdot a_k\}$ have $\bar{a}$ as second coordinate.
\end{itemize}
\end{prop}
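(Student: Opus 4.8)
The plan is to read condition (a) as the single statement that the chosen prefix is itself a genuine $t$-weak sequencing, and then to produce such a prefix by a greedy, element-by-element construction. Recall from the derivation preceding Equation \eqref{pol1} that, via Lemma \ref{SumOnN}, the first product in $q_{\bm{a}'}$ vanishes exactly when two slots sharing the same second coordinate receive the same $\Z_p$-value, while a factor of the second product vanishes exactly when $s_i=s_j$ for a pair $i<j\le h$ with $j-i\le t$ and $j\ne i+1$. Since we shall always fill the slots with \emph{distinct} elements of $S$, two slots with equal second coordinate $a$ automatically get distinct $\Z_p$-values (elements $x\cdot a$ and $x'\cdot a$ of $S$ coincide only if $x=x'$), so the first product is harmless. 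Thus $q_{\bm{a}'}(x_1,\dots,x_h)\ne 0$ is equivalent to the assertion that the partial sums $s_0,\dots,s_h$ are pairwise distinct whenever their indices differ by at most $t$, the consecutive pairs $j=i+1$ being automatic because $S\subseteq G\setminus\{0\}$ contains no identity element.

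For the allocation, let $A$ be a set of exactly $2(t-1)$ elements of $S$ with second coordinate $\bar a$; such a set exists since $\bar a$ occurs at least $2(t-1)$ times in $\pi_2(S)$. I would build the prefix in two phases: first fill positions $1,\dots,h-t+1$ with elements drawn from $S\setminus A$, and then fill positions $h-t+2,\dots,h$ with $t-1$ elements of $A$, leaving the remaining $t-1$ elements of $A$ unused. By construction the last $t-1$ prefix slots have second coordinate $\bar a$, giving (b), and the $t-1$ unused elements of $A$ lie among $\{x_{h+1}\cdot a_{h+1},\dots,x_k\cdot a_k\}$, giving (c). (When $h=t-1$ the first phase is empty and the entire prefix is built in the second phase.) It then remains only to guarantee (a), i.e.\ that the greedy choice never stalls.

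At a generic step, suppose a valid $t$-weak prefix of length $m-1$ is in hand and we must choose the $m$-th element from the current pool. Placing an element $e$ in slot $m$ sets $s_m=s_{m-1}\cdot e$, and for $i<m$ one has $s_m=s_i$ exactly when $e=s_{m-1}^{-1}\cdot s_i$, a single element of $G$. The new constraints created are $s_m\ne s_i$ for $\max(0,m-t)\le i\le m-2$ — at most $t-1$ of them — so at most $t-1$ elements of the pool are forbidden, and any non-forbidden choice preserves the invariant since adding slot $m$ introduces only pairs involving the index $m$. Completing the greedy therefore yields a prefix whose length-$h$ partial sums satisfy every window inequality, which by the equivalence of the first paragraph is exactly a choice with $q_{\bm{a}'}(x_1,\dots,x_h)\ne 0$.

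The crux is to verify that the pool never shrinks below $t$, so that at most $t-1$ forbidden elements always leave an admissible choice. In the first phase the pool is $S\setminus A$ with its used elements removed, of size $k-2(t-1)-(m-1)\ge t$ for every $m\le h-t+1$, where the inequality is precisely the hypothesis $k-h\ge 2(t-1)$; in the second phase the pool is $A$ with its used elements removed, of size $t+h-m\ge t$ for every $m\le h$. In both phases the pool has at least $t$ elements against at most $t-1$ forbidden, so a valid element always exists and the construction runs to completion. I expect this simultaneous counting to be the main obstacle: one must check that in \emph{both} phases the available pool strictly exceeds the forbidden count at every step, and the hypotheses $k-h\ge 2(t-1)$, $h\ge t-1$, together with $|A|=2(t-1)$, are calibrated so that each pool meets the threshold $t$ exactly in its worst case.
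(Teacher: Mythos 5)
Your proof is correct and follows essentially the same route as the paper's: reserve a set of $2(t-1)$ elements with second coordinate $\bar{a}$, build the prefix greedily in two phases (early slots from the complement, the last $t-1$ slots from the reserve), observe that each step creates at most $t-1$ forbidden group elements since each window constraint $s_m=s_i$ pins down the new element uniquely, and check that both pools never drop below $t$ using exactly the hypotheses $k-h\geq 2(t-1)$ and $h\geq t-1$. Your preliminary translation of $q_{\bm{a}'}\neq 0$ into the statement that the prefix is a $t$-weak sequencing of distinct elements is the same reduction the paper performs inside its induction via the ratio $q_{(a_1,\dots,a_m,a)}/q_{(a_1,\dots,a_m)}$, so the two arguments coincide in substance.
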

\begin{proof}
Here we choose a set $S'$ of $2(t-1)$ elements that have $\bar{a}$ as the second coordinate, and we set $S'':=S\setminus S'$.
Now we prove that we can choose, inductively on $\ell$, $(x_1\cdot a_1, x_2\cdot a_2, \ldots, x_{h-t}\cdot a_{h-t})$ in $S''$ and $(x_{h-t+1}\cdot a_{h-t+1}, \ldots, x_h\cdot a_h)$ in $S'$ such that, for any $\ell\leq h$, we have $q_{(a_1,\dots,a_{\ell})}(x_1,\cdots,x_{\ell})\not=0$. Then, properties $(b)$ and $(c)$ will be guaranteed by our choice for $x_1\cdot a_1,\ldots,x_h\cdot a_h$.

BASE CASE: Let $\ell=1$. Since $q_{(a)}(x)=1$ for any $t$ and for any $x\cdot a$ of both $S'$ and $S''$, the statement is realized for $\ell=1$.

INDUCTIVE CASE: Let us assume the statement for $\ell\in \{1,\dots,m\}$ and let us prove it for $\ell=m+1$ where $m+1\leq h$.
Since the statement is true for $\ell=m$, there exists an $m$-tuple $(x_1\cdot a_1,\dots,x_m\cdot a_m)$ such that
$$q_{(a_1,\dots, a_m)}(x_1,\cdots,x_m)\not=0.$$
Given $x\cdot a\in G$, we set $s_m:=(x_1\cdot a_1)\cdots(x_m\cdot a_m)=y_m\cdot b_m$ and we define $y$ and $s$ such that $s=s_m\cdot (x\cdot a)=y\cdot b$ where $b=b_m\cdot a$. Then we have
$$\frac{q_{(a_1,\dots, a_m,a)}(x_1,\dots,x_m,x)}{q_{(a_1,\dots, a_m)}(x_1,\dots,x_m)}=$$
$$=\prod_{1\leq i<m+1,\ a=a_i} (x-x_i)\prod_{\substack{0\leq i<m \\ m+1-i\leq t,\ b_i=b}} (x_{i+1}+ \varphi_{a_{i+1}}x_{i+2}+\varphi_{a_{i+1}\cdot a_{i+2}}x_{i+3}+\dots+\varphi_{a_{i+1}\dots a_{m}}x).$$
Here, any element $x\cdot a$ of $S\setminus \{x_1\cdot a_1,\dots,x_m\cdot a_m\}$ satisfies
$\prod_{1\leq i<m+1} (x-x_i)\not=0$ if $a=a_i$. Hence, to have $\frac{q_{(a_1,\dots, a_m)}(x_1,\dots,x_m,x)}{q_{(a_1,\dots, a_m,a)}(x_1,\dots,x_m)}\not=0$, it suffice to find $x$ such that
$$\prod_{\max(0,m+1-t)\leq i<m,\ b_i=b}(x_{i+1}+ \varphi_{a_{i+1}}x_{i+2}+\varphi_{a_{i+1}\cdot a_{i+2}}x_{i+3}+\dots+\varphi_{a_{i+1}\dots a_{m}}x) \neq 0.$$

Note that there is at most one $x\cdot a\in S\setminus\{x_1\cdot a_1,\dots,x_m\cdot a_m\}$ such that
$$(x_{i+1}+ \varphi_{a_{i+1}}x_{i+2}+\varphi_{a_{i+1}\cdot a_{i+2}}x_{i+3}+\dots+\varphi_{a_{i+1}\dots a_{m}}x)=0$$
and $b_i=b$. Indeed this would imply that $s_i=s$ in $G$ that is $(x_{i+1}\cdot a_{i+1})\cdot \ldots \cdot (x_m\cdot a_m)\cdot (x\cdot a)=0$ which is satisfied by exactly one element $x\cdot a$ of $G$. Since those relations are at most $t-1$, we have at most $t-1$ values $x\cdot a$ in $S\setminus\{x_1\cdot a_1,\dots,x_m\cdot a_m\}$ such that $$\prod_{\max(0,m+1-t)\leq i<m,\ b_i=b} (x_{i+1}+ \varphi_{a_{i+1}}x_{i+2}+\varphi_{a_{i+1}\cdot a_{i+2}}x_{i+3}+\dots+\varphi_{a_{i+1}\dots a_{m}}x)=0.$$

Here we have that either $h-t+1>m$ or $h-t+1\leq m$. In the first case, we have that
$$|S''\setminus \{x_1\cdot a_1,\dots, x_m\cdot a_m\}|=k-2(t-1)-m\geq t+1$$
where the inequality holds since $k-2(t-1)-m=(k-m)-2(t-1)$ and $k-m\geq k-h\geq 2(t-1)$.
It follows that there exists $x\cdot a\in S''\setminus \{x_1\cdot a_1,\dots, x_m\cdot a_m\}$ such that
$$\prod_{\max(0,m+1-t)\leq i<m,\ b_i=b} (x_{i+1}+ \varphi_{a_{i+1}}x_{i+2}+\varphi_{a_{i+1}\cdot a_{i+2}}x_{i+3}+\dots+\varphi_{a_{i+1}\dots a_{m}}x)\not=0.$$
In the second case, we have that
$$|S'\setminus \{x_1\cdot a_1,\dots, x_m\cdot a_m\}|=2(t-1)-(m-(h-t+1))\geq t$$
where the last inequality holds since $m\leq h-1$ and hence $2(t-1)-(m-(h-t+1))\geq 2(t-1)-(h-1-(h-t+1))=t$.
It follows that also in this case, there exists $x\cdot a\in S'\setminus \{x_1\cdot a_1,\dots, x_m\cdot a_m\}$ such that
$$\prod_{\max(0,m+1-t)\leq i<m,\ b_i=b}(x_{i+1}+ \varphi_{a_{i+1}}x_{i+2}+\varphi_{a_{i+1}\cdot a_{i+2}}x_{i+3}+\dots+\varphi_{a_{i+1}\dots a_{m}}x)\not=0.$$

In both cases, we can find $x_{m+1}\cdot a_{m+1}$ that satisfies property $(b)$ and $(c)$ and which it is such that
$$\frac{q_{(a_1,\dots, a_m,a_{m+1})}(x_1,\dots,x_m,x_{m+1})}{q_{(a_1,\dots, a_m)}(x_1,\dots,x_m)}=\prod_{1\leq i<m+1,\ a_{m+1}=a_i} (x_{m+1}-x_i)\cdot$$
$$\cdot\prod_{\max(0,m+1-t)\leq i<m,\ b_i=b_{m+1}}(x_{i+1}+ \varphi_{a_{i+1}}x_{i+2}+\varphi_{a_{i+1}\cdot a_{i+2}}x_{i+3}+\dots+\varphi_{a_{i+1}\dots a_{m}}x_{m+1})\not=0.$$
Since $\mathbb{Z}_p$ is a field and due to the inductive hypothesis $q_{(a_1,\dots, a_m,a)}(x_1,\dots,x_m) \neq 0$, we also have that
\begin{multline}
\frac{q_{(a_1,\dots, a_m,a_{m+1})}(x_1,\dots,x_m,x_{m+1})}{q_{(a_1,\dots, a_m)}(x_1,\dots,x_m)}\cdot  \\ q_{(a_1,\dots, a_m)}(x_1,\dots,x_m)=q_{(a_1,\dots, a_m,a_{m+1})}(x_1,\dots,x_{m+1})\not=0\end{multline}
which completes the proof.
\end{proof}
\begin{rem}\label{bound}
Note that the hypothesis that at least $2(t-1)$ elements of $S$ has the same second coordinate is always satisfied when
$$k>(2t-3)\frac{|G|}{p}=(2t-3)|H|.$$
This means that the approach of Proposition \ref{fix} can be always tried assuming that $k$ is big enough. For smaller values of $k$ one could fix the first elements differently, as done for example in Proposition 2.2 of \cite{CD}, but, since then one should consider a huge number of cases (see also the following Remark \ref{MagicRemark}), we consider explicitly only this asymptotic case.
\end{rem}
In the following we assume that we have fixed, according to Proposition \ref{fix}, $$\{x_1\cdot a_1, x_2\cdot a_2, \ldots, x_h\cdot a_h\}\subseteq S$$ that satisfies properties $(a),(b)$ and $(c)$ of such proposition. We note that every $x\cdot a \in S\setminus\{x_1 \cdot a_1, x_2\cdot a_2, \ldots, x_h\cdot a_h\}$ is such that $x-x_i\not=0$ for any $i\in \{1,\dots,h\}$ such that $a=a_i$. Therefore, if $\bm{a}$ is a quotient sequencing that extends $\bm{a}''$, it is left to find a nonzero point for the polynomial
$$\frac{q_{\bm{a}}(x_1,\dots,x_h,x_{h+1},\dots,x_{k})}{q_{\bm{a}'}(x_1,\cdots,x_h)\prod_{1\leq i\leq h<j\leq k,\ a_j=a_i }(x_j-x_i)}.$$
Since the free variables are now $x_{h+1},\dots,x_k$, we set $\ell:=k-h$ and $z_i := x_{i+h}$; here the constrain $k-h\geq 2(t-1)$ of Proposition \ref{fix} becomes $\ell\geq 2(t-1)$.
Then we denote by $h_{\bm{a},\ell}$ the polynomial
\begin{equation}\label{definizioneH}h_{\bm{a},\ell}(z_1,\dots,z_{\ell}):=\frac{q_{\bm{a}}(x_1,\dots,x_h,z_{1},\dots,z_{\ell})}{q_{\bm{a}'}(x_1,\cdots,x_h)\prod_{1\leq i\leq 1<j\leq \ell,\ a_j=a_i }(z_j-x_i)}.\end{equation} Since we are also assuming that $k-\ell\geq t-1$, that is, $k-(t-1)\geq \ell\geq 2(t-1)$, denoting by $\bm{a}''$ the quotient sequencing given by the last $\ell$ elements of $\bm{a}$, we obtain the following expression
\begin{multline}\label{espressioneH}
h_{\bm{a},\ell}(z_1,\cdots,z_{\ell})=q_{\bm{a}''}(z_1,\cdots,z_{\ell}) \cdot \\
\prod_{\substack{0\leq i\leq t-1;\\ 1\leq j\leq t-i-1;\\ b_{k-\ell-i-1}=b_{(k-\ell)+j}}} (x_{k-\ell-i}+\varphi_{\bar{a}}x_{k-\ell-i+1}+\cdots+\varphi_{\bar{a}^i}x_{k-\ell} +  \varphi_{\bar{a}^{i+1}}z_1+\cdots+\varphi_{\bar{a}^{i+1}\cdot a_{k-\ell+1}\cdot \dots \cdot a_{k-\ell+j-1}}z_j).
\end{multline}
Now we aim to apply the Non-Vanishing Corollary (of the Combinatorial Nullstellensatz) to the polynomial $h_{\bm{a},\ell}$. For this purpose, it is enough to consider the terms of $h_{\bm{a},\ell}$ of maximal degree in the variables $z_1,\dots,z_{\ell}$ that are the ones where no $x_i$ appears. We denote by $r_{\bm{a},\ell}$ the polynomial given by those terms, that is
\begin{multline}\label{espressioneQ}r_{\bm{a},\ell}:=q_{\bm{a}''}(z_1,\cdots,z_{\ell}) \cdot \\ \prod_{\substack{0\leq i\leq t-1;\\ 1\leq j\leq t-i-1;\\ b_{k-\ell-i-1}=b_{(k-\ell)+j}}} (\varphi_{\bar{a}^{i+1}}z_1+\varphi_{\bar{a}^{i+1}\cdot a_{k-\ell+1}}z_2+\cdots+\varphi_{\bar{a}^{i+1}\cdot a_{k-\ell+1}\cdot \dots \cdot a_{k-\ell+j-1}}z_j).
\end{multline}
Since $b_{k-\ell-i-1}=b_{(k-\ell)+j}$ if and only if
$$a_{k-\ell-i} \cdot a_{k-\ell-1}\cdot \ldots \cdot a_{k-\ell}\cdot a_{k-\ell+1}\cdot\ldots\cdot a_{k-\ell+j}= id$$
and $a_{k-\ell-i}= a_{k-\ell-1}= \ldots= a_{k-\ell}=\bar{a}$,
we can state the following, simple but very powerful, remark.
\begin{rem}\label{MagicRemark}
Since we are assuming that $a_{h},a_{h-1},\dots, a_{h-t+1}$ are all equals to $\bar{a}$, the expression of $r_{\bm{a},\ell}$ does not depend on all the quotient sequence $\bm{a}$ but only by $\bm{a}''$ and $\bar{a}$. Here property $b$ of Proposition \ref{fix} is crucial since, otherwise, given $\bm{a}''$, we would have $|H|^{t-1}$ possibilities for the polynomial $r_{\bm{a},\ell}(z_1,\cdots,z_{\ell})$ according to the sequence $a_{h},a_{h-1},$ $\ldots, a_{h-t+1}$.

In the following, we just denote this polynomial by $r_{\bm{a}'',\bar{a},\ell}$.
\end{rem}
Indeed Remark \ref{MagicRemark} means that, after these manipulations, we are left to consider a polynomial that does not depend on $k=|S|$, and hence we have chances to get a result that is very general on $k$.

To apply the Non-Vanishing Corollary, we also need to find, for any quotient sequencing $\bm{a}''$ of length $\ell$, a nonzero coefficient in $r_{\bm{a}'',\bar{a},\ell}$ that divides the bounding monomial. Moreover, due to property $c$ of Proposition \ref{fix}, we may assume that $\bar{a}$ appears at least $t-1$ times in $\bm{a}''$.
\subsection{Computational Results}

As done for the sequenceability problem in the more general setting, we consider here groups of type $D_{2p}$ and $G_{3p}$. Also, since the case $G=\Z_p\times \Z_e$ has not been considered in \cite{CD}, we consider these groups here.
\begin{ex}{Example}
Let $t = 4$,  $\ell = 2(t-1) = 6$ and $\bar{a} = 0$.  Let $p$ be a prime and suppose~$S \subseteq D_{2p} \setminus \{0\}$ with $|S| = k \geq 11$ and suppose that $S$ has at least $6$ elements with coset $\bar{a}$.  Let $S' = S \setminus \{x_1 \cdot a_1 , \ldots,  x_{k-\ell} \cdot a_{k - \ell}\}$ be of type $(3,3)$.  The sequence $\bm{a''} = (1, 0, 1, 0, 1, 0)$ has partial sums $(0, 1, 1, 0, 0, 1, 1)$.

By Proposition \ref{fix},  we desire a sequencing of~$S'$ of the form 
$$\left( z_1 \cdot 1,  z_2 \cdot 0,  z_3 \cdot 1,  z_4 \cdot 0,  z_5 \cdot 1,  z_6 \cdot 0  \right).$$
The polynomial we need to study is the one defined in \eqref{espressioneQ}.  Hence we get
\begin{align*}
&r_{\bm{a}'',\bar{a},\ell} = (z_1 - z_2 - z_3) (z_3 - z_1) (z_5 - z_1) (z_4 - z_2) (z_6 - z_2) (z_5 - z_3) (z_6 - z_4)  \\ & (z_1 - z_2 - z_3) (z_1 - z_2 -z_3 + z_4) (z_2 + z_3 - z_4 - z_5) (z_3 - z_4 - z_5)(z_3-z_4-z_5+z_6)
\end{align*}
To apply the Non-Vanishing Corollary we need a monomial of this polynomial which divides the bounding monomial $z_1^2 z_2^2 z_3^2 z_4^2 z_5^2 z_6^2$ with a nonzero coefficient.  One such is $z_1^2 z_2^2 z_3^2 z_4^2 z_5^2 z_6^2$,  which has coefficient~$-12$.   Hence whenever~$S$ has this form it is $t$-weakly sequenceable. 
\end{ex}
We are now ready to prove the main result of this section.
\begin{thm}\label{th:main2}
Let $n = pe$ with~$p>3$ prime and let $G$ be a group of size $n$. Then subsets~$S$ of size~$k$ of~$G$ are $t$-weakly sequenceable whenever $e\in \{1,2,3\}$, $k$ is large enough and $t\leq 6$.
\end{thm}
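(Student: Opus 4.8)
The plan is to run the reduction engine of this section at full strength: use Proposition~\ref{fix} to peel off all but a bounded number of elements, invoke Remark~\ref{MagicRemark} to make the surviving polynomial independent of $k$, and close each remaining case by a finite search. First I would dispatch $e=1$, where $G\cong\Z_p$ and the statement is exactly the $t\leq 6$ result of~\cite{CD}. For $e\in\{2,3\}$, a group of order $pe$ is, up to isomorphism, one of $D_{2p}$ or $\Z_p\times\Z_2$ (when $e=2$) and one of $G_{3p}$ or $\Z_p\times\Z_3$ (when $e=3$); each is a semidirect product $\Z_p\rtimes_{\varphi}H$ of the form covered by Definitions~\ref{df:Dihedral} and~\ref{df:G3p}, so it suffices to treat these four families.

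Fix $t\leq 6$ and set $\ell:=2(t-1)\leq 10$ and $h:=k-\ell$. For $k$ large enough, Remark~\ref{bound} guarantees that some coset $\bar a$ occurs at least $2(t-1)$ times in $\pi_2(S)$, so the hypotheses of Proposition~\ref{fix} hold; applying it produces an initial segment $(x_1\cdot a_1,\dots,x_h\cdot a_h)$ with properties (a)--(c) and reduces the task to finding a nonzero point of the polynomial $h_{\bm a,\ell}$ of~\eqref{definizioneH} in the $\ell$ free variables $z_1,\dots,z_\ell$. The decisive observation is Remark~\ref{MagicRemark}: because property (b) forces the tail cosets of the fixed part to equal $\bar a$, the top-degree part $r_{\bm a'',\bar a,\ell}$ of~\eqref{espressioneQ} depends only on the length-$\ell$ quotient sequencing $\bm a''$ and on $\bar a$, hence is entirely independent of $k$. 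Likewise the bounding monomial $\prod_i z_i^{\nu_{a_{i+h}}-1}$, where $\nu_c$ counts the remaining elements with coset $c$, is $k$-independent since $\sum_c \nu_c=\ell$. Here the only role of ``$k$ large'' is to secure the hypothesis of Proposition~\ref{fix}, namely $k>(2t-3)e$; each value set then has size exactly $\nu_{a_{i+h}}$, so the condition $|C_i|>\gamma_i$ of Theorem~\ref{th:pm} holds automatically for any monomial dividing the bounding monomial.

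With this reduction in hand, I would enumerate the finitely many relevant pairs $(\bm a'',\bar a)$. Since $\ell\leq 10$ and $|H|=e\leq 3$, there are only finitely many tuples $(\nu_0,\dots,\nu_{e-1})$ of nonnegative integers summing to $\ell$, and by property (c) we may restrict to those with $\nu_{\bar a}\geq t-1$; for each such remaining type I would select one arrangement $\bm a''$. For every case the goal is a maximal-degree monomial of $r_{\bm a'',\bar a,\ell}$ that divides the bounding monomial and has nonzero coefficient, after which Theorem~\ref{th:pm} delivers the sought nonzero point. These monomials and coefficients would be produced by a SageMath search, exactly as in the worked example preceding the statement. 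For $D_{2p}$ and for the direct products $\Z_p\times\Z_e$ the relevant automorphisms are $\pm\mathrm{id}$, the coefficients are integers, and I would simply check that the exhibited coefficient is nonzero modulo every prime $p>3$. For $G_{3p}$ the coefficients take the form $ar+b$ with $r^2+r+1\equiv 0\pmod p$, and I would argue precisely as in the proof of Theorem~\ref{th:main}, via~\eqref{modp}, that in each case at least one exhibited coefficient is nonzero for every admissible prime.

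The main obstacle is the finite verification rather than any conceptual gap: there is no a priori reason why a suitable monomial must exist in $r_{\bm a'',\bar a,\ell}$ for every remaining type and every group family, and establishing this is exactly what the computation does. The $G_{3p}$ case is the most delicate, since a single exhibited monomial can vanish at isolated primes, so one must supply several candidate coefficients per case and lean on~\eqref{modp} to cover all $p\equiv 1\pmod 6$ simultaneously. What makes the whole scheme tractable is the $k$-independence granted by Remark~\ref{MagicRemark}: without the tail-coset normalisation of property (b), each quotient sequencing would spawn $|H|^{t-1}$ distinct residual polynomials and the finite check would fail to close uniformly in $k$.
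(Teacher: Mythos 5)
Your proposal is correct and follows essentially the same route as the paper: dispatch $e=1$ via~\cite{CD}, reduce the four families $D_{2p}$, $\Z_p\times\Z_2$, $G_{3p}$, $\Z_p\times\Z_3$ through Remark~\ref{bound}, Proposition~\ref{fix} and Remark~\ref{MagicRemark} to the $k$-independent polynomials $r_{\bm{a}'',\bar{a},\ell}$, and close each remaining type by a SageMath search for a maximal-degree monomial dividing the bounding monomial with nonzero coefficient (Tables~\ref{tab:6_Prod}--\ref{tab:6_G}). The only, harmless, deviation is in the $G_{3p}$ case: where you propose supplying several candidate coefficients per type and covering all primes $p\equiv 1\pmod 6$ simultaneously via~\eqref{modp}, the paper exhibits a single coefficient $ar+b$ per type and simply observes that it can vanish at only finitely many primes, which are automatically excluded because $3p=|G|>k$ and $k$ is taken large enough.
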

\begin{proof}
The case $e=1$ follows from the results of \cite{CD}.

As already noted in the proof of Theorem \ref{th:main}, a group of size $2p$ is either $\Z_p\times \Z_2$ or the dihedral group $D_{2p}$ and a group of size $3p$ is either $\Z_p\times \Z_3$ or $G_{3p}$ for some value of $r$.
Here, due to Remark \ref{bound}, if $k\geq (2t-3)e+1$, $S$ admits a coset whose size is at least $2(t-1)$ and hence, due to Proposition \ref{fix}, we can apply Non-Vanishing Corollary to the polynomial of Equation \eqref{espressioneQ} for suitable quotient sequences and nonzero monomial (see Tables \ref{tab:6_Prod},  \ref{tab:6_Prod1},  \ref{tab:6_D} and \ref{tab:6_G} below). These results are obtained using SageMath~\cite{SageMath}. 

Here we note that the coefficients presented in Table \ref{tab:6_G} can be zero only for finitely many primes $p$ (see also the discussion in the proof of Theorem \ref{th:main}). On the other hand, since $3p>k$, if $k$ is large enough all these coefficients must be nonzero.
\end{proof}

The following Tables~\ref{tab:6_Prod},  \ref{tab:6_Prod1},  \ref{tab:6_D} and \ref{tab:6_G} contain the required monomials and their coefficients for the proof of Theorem~\ref{th:main2} in the cases~$G\in \{\Z_p\times \Z_2, \Z_p\times \Z_3,D_{2p},G_{3p}\}$.
\footnotesize
\begin{longtable}{llllll}
\caption{Monomials and their coefficients sufficient for the proof of Theorem~\ref{th:main2} in the case $t=6$ and~$G=\Z_p\times \Z_2$.}\label{tab:6_Prod}\\
\hline
$\bar{a}$ & $\bm{\lambda}$ & $\mathbf{a''}$ & deg & monomial/s & coefficient/s \\
\hline
\endfirsthead
\hline
$\bar{a}$ & $\bm{\lambda}$ & $\mathbf{a''}$ & deg & monomial/s & coefficient/s \\
\hline
\endhead
$0$ & $(5, 5)$ & \begin{tabular}{@{}l@{}} $(0, 1, 0, 0, 1, 1, 1, 0, 0, 1)$ \end{tabular} & \begin{tabular}{@{}l@{}} 40 \end{tabular} & \begin{tabular}{@{}l@{}} $x_{1}^{4}x_{2}^{4}x_{3}^{4}x_{4}^{4}x_{5}^{4}x_{6}^{4}x_{7}^{4}x_{8}^{4}x_{9}^{4}x_{10}^{4}$ \end{tabular} & \begin{tabular}{@{}l@{}} $- 5 \cdot 467$ \end{tabular}\\ \hline \\ 
$0$ & $(6, 4)$ & \begin{tabular}{@{}l@{}} $(1, 0, 0, 0, 1, 0, 1, 0, 1, 0)$ \end{tabular} & \begin{tabular}{@{}l@{}} 38 \end{tabular} & \begin{tabular}{@{}l@{}} $x_{2}^{4}x_{3}^{5}x_{4}^{5}x_{5}^{3}x_{6}^{5}x_{7}^{3}x_{8}^{5}x_{9}^{3}x_{10}^{5}$ \\ $x_{1}x_{2}^{3}x_{3}^{5}x_{4}^{5}x_{5}^{3}x_{6}^{5}x_{7}^{3}x_{8}^{5}x_{9}^{3}x_{10}^{5}$ \end{tabular} & \begin{tabular}{@{}l@{}} $- 1063$ \\ $2^4 \cdot 503$ \end{tabular}\\ \hline \\ 
$0$ & $(7, 3)$ & \begin{tabular}{@{}l@{}} $(1, 0, 0, 0, 1, 0, 0, 0, 1, 0)$ \end{tabular} & \begin{tabular}{@{}l@{}} 36 \end{tabular} & \begin{tabular}{@{}l@{}} $x_{3}^{2}x_{4}^{6}x_{5}^{2}x_{6}^{6}x_{7}^{6}x_{8}^{6}x_{9}^{2}x_{10}^{6}$ \end{tabular} & \begin{tabular}{@{}l@{}} $2$ \end{tabular}\\ \hline \\ 
$0$ & $(8, 2)$ & \begin{tabular}{@{}l@{}} $(1, 0, 0, 0, 1, 0, 0, 0, 0, 0)$ \end{tabular} & \begin{tabular}{@{}l@{}} 45 \end{tabular} & \begin{tabular}{@{}l@{}} $x_{3}^{2}x_{4}^{7}x_{5}x_{6}^{7}x_{7}^{7}x_{8}^{7}x_{9}^{7}x_{10}^{7}$ \end{tabular} & \begin{tabular}{@{}l@{}} $- 2^2$ \end{tabular}\\ \hline \\ 
$0$ & $(9, 1)$ & \begin{tabular}{@{}l@{}} $(0, 0, 0, 0, 1, 0, 0, 0, 0, 0)$ \end{tabular} & \begin{tabular}{@{}l@{}} 66 \end{tabular} & \begin{tabular}{@{}l@{}} $x_{1}^{5}x_{2}^{7}x_{3}^{7}x_{4}^{8}x_{6}^{7}x_{7}^{8}x_{8}^{8}x_{9}^{8}x_{10}^{8}$ \end{tabular} & \begin{tabular}{@{}l@{}} $- 2^5 \cdot 3$ \end{tabular}\\ \hline \\ 
$1$ & $(4, 6)$ & \begin{tabular}{@{}l@{}} $(0, 1, 1, 1, 0, 0, 1, 0, 1, 1)$ \end{tabular} & \begin{tabular}{@{}l@{}} 42 \end{tabular} & \begin{tabular}{@{}l@{}} $x_{1}^{3}x_{2}^{5}x_{3}^{5}x_{4}^{5}x_{5}^{3}x_{6}^{3}x_{7}^{5}x_{8}^{3}x_{9}^{5}x_{10}^{5}$ \end{tabular} & \begin{tabular}{@{}l@{}} $- 1187$ \end{tabular}\\ \hline \\ 
$1$ & $(3, 7)$ & \begin{tabular}{@{}l@{}} $(1, 0, 1, 1, 1, 1, 1, 0, 1, 0)$ \end{tabular} & \begin{tabular}{@{}l@{}} 47 \end{tabular} & \begin{tabular}{@{}l@{}} $x_{1}^{5}x_{2}^{2}x_{3}^{6}x_{4}^{6}x_{5}^{6}x_{6}^{6}x_{7}^{6}x_{8}^{2}x_{9}^{6}x_{10}^{2}$ \\ $x_{1}^{6}x_{2}x_{3}^{6}x_{4}^{6}x_{5}^{6}x_{6}^{6}x_{7}^{6}x_{8}^{2}x_{9}^{6}x_{10}^{2}$ \end{tabular} & \begin{tabular}{@{}l@{}} $2 \cdot 3^2 \cdot 1511$ \\ $23 \cdot 499$ \end{tabular}\\ \hline \\ 
$1$ & $(2, 8)$ & \begin{tabular}{@{}l@{}} $(1, 0, 1, 1, 1, 1, 1, 1, 1, 0)$ \end{tabular} & \begin{tabular}{@{}l@{}} 54 \end{tabular} & \begin{tabular}{@{}l@{}} $x_{1}^{4}x_{2}x_{3}^{6}x_{4}^{7}x_{5}^{7}x_{6}^{7}x_{7}^{7}x_{8}^{7}x_{9}^{7}x_{10}$ \\ $x_{1}^{5}x_{2}x_{3}^{5}x_{4}^{7}x_{5}^{7}x_{6}^{7}x_{7}^{7}x_{8}^{7}x_{9}^{7}x_{10}$ \end{tabular} & \begin{tabular}{@{}l@{}} $- 2^5 \cdot 5^2$ \\ $- 4337$ \end{tabular}\\ \hline \\ 
$1$ & $(1, 9)$ & \begin{tabular}{@{}l@{}} $(1, 0, 1, 1, 1, 1, 1, 1, 1, 1)$ \end{tabular} & \begin{tabular}{@{}l@{}} 62 \end{tabular} & \begin{tabular}{@{}l@{}} $x_{1}^{5}x_{3}^{3}x_{4}^{6}x_{5}^{8}x_{6}^{8}x_{7}^{8}x_{8}^{8}x_{9}^{8}x_{10}^{8}$ \end{tabular} & \begin{tabular}{@{}l@{}} $- 2$ \end{tabular}\\ \hline \\ 
$1$ & $(0, 10)$ & \begin{tabular}{@{}l@{}} $(1, 1, 1, 1, 1, 1, 1, 1, 1, 1)$ \end{tabular} & \begin{tabular}{@{}l@{}} 75 \end{tabular} & \begin{tabular}{@{}l@{}} $x_{1}^{3}x_{2}^{4}x_{3}^{5}x_{4}^{9}x_{5}^{9}x_{6}^{9}x_{7}^{9}x_{8}^{9}x_{9}^{9}x_{10}^{9}$ \\ $x_{1}^{3}x_{2}^{5}x_{3}^{5}x_{4}^{8}x_{5}^{9}x_{6}^{9}x_{7}^{9}x_{8}^{9}x_{9}^{9}x_{10}^{9}$ \end{tabular} & \begin{tabular}{@{}l@{}} $- 2^4 \cdot 5^2$ \\ $- 7 \cdot 853$ \end{tabular}\\ \hline
\end{longtable}

\footnotesize
\begin{longtable}{llllll}
\caption{Monomials and their coefficients sufficient for the proof of Theorem~\ref{th:main2} in the case $t=6$ and~$G=\Z_p\times \Z_3$.}\label{tab:6_Prod1}\\
\hline
$\bar{a}$ & $\bm{\lambda}$ & $\mathbf{a''}$ & deg & monomial/s & coefficient/s \\
\hline
\endfirsthead
\hline
$\bar{a}$ & $\bm{\lambda}$ & $\mathbf{a''}$ & deg & monomial/s & coefficient/s \\
\hline
\endhead
$0$ & $(5, 3, 2)$ & \begin{tabular}{@{}l@{}} $(1, 0, 0, 1, 0, 0, 0, 2, 1, 2)$ \end{tabular} & \begin{tabular}{@{}l@{}} 25 \end{tabular} & \begin{tabular}{@{}l@{}} $x_{1}^{2}x_{2}x_{3}^{4}x_{4}^{2}x_{5}^{4}x_{6}^{4}x_{7}^{4}x_{8}x_{9}^{2}x_{10}$ \end{tabular} & \begin{tabular}{@{}l@{}} $2 \cdot 3^2$ \end{tabular}\\ \hline \\ 
$0$ & $(5, 4, 1)$ & \begin{tabular}{@{}l@{}} $(1, 0, 0, 1, 0, 0, 0, 1, 1, 2)$ \end{tabular} & \begin{tabular}{@{}l@{}} 22 \end{tabular} & \begin{tabular}{@{}l@{}} $x_{3}^{4}x_{4}x_{5}^{3}x_{6}^{4}x_{7}^{4}x_{8}^{3}x_{9}^{3}$ \end{tabular} & \begin{tabular}{@{}l@{}} $1$ \end{tabular}\\ \hline \\ 
$0$ & $(5, 5, 0)$ & \begin{tabular}{@{}l@{}} $(0, 1, 0, 1, 0, 0, 0, 1, 1, 1)$ \end{tabular} & \begin{tabular}{@{}l@{}} 33 \end{tabular} & \begin{tabular}{@{}l@{}} $x_{1}^{2}x_{2}x_{3}^{4}x_{4}^{4}x_{5}^{4}x_{6}^{4}x_{7}^{4}x_{8}^{2}x_{9}^{4}x_{10}^{4}$ \end{tabular} & \begin{tabular}{@{}l@{}} $3 \cdot 5^2$ \end{tabular}\\ \hline \\ 
$0$ & $(6, 2, 2)$ & \begin{tabular}{@{}l@{}} $(1, 0, 0, 1, 0, 0, 0, 0, 2, 2)$ \end{tabular} & \begin{tabular}{@{}l@{}} 25 \end{tabular} & \begin{tabular}{@{}l@{}} $x_{3}^{3}x_{4}x_{5}^{4}x_{6}^{5}x_{7}^{5}x_{8}^{5}x_{9}x_{10}$ \end{tabular} & \begin{tabular}{@{}l@{}} $-1$ \end{tabular}\\ \hline \\ 
$0$ & $(6, 3, 1)$ & \begin{tabular}{@{}l@{}} $(1, 0, 0, 1, 0, 0, 0, 0, 1, 2)$ \end{tabular} & \begin{tabular}{@{}l@{}} 30 \end{tabular} & \begin{tabular}{@{}l@{}} $x_{2}x_{3}^{5}x_{4}^{2}x_{5}^{5}x_{6}^{5}x_{7}^{5}x_{8}^{5}x_{9}^{2}$ \end{tabular} & \begin{tabular}{@{}l@{}} $- 2^4$ \end{tabular}\\ \hline \\ 
$0$ & $(6, 4, 0)$ & \begin{tabular}{@{}l@{}} $(1, 0, 0, 1, 0, 0, 0, 0, 1, 1)$ \end{tabular} & \begin{tabular}{@{}l@{}} 28 \end{tabular} & \begin{tabular}{@{}l@{}} $x_{3}^{3}x_{4}x_{5}^{4}x_{6}^{5}x_{7}^{5}x_{8}^{5}x_{9}^{2}x_{10}^{3}$ \end{tabular} & \begin{tabular}{@{}l@{}} $-1$ \end{tabular}\\ \hline \\ 
$0$ & $(7, 2, 1)$ & \begin{tabular}{@{}l@{}} $(1, 0, 0, 1, 0, 0, 0, 0, 0, 2)$ \end{tabular} & \begin{tabular}{@{}l@{}} 33 \end{tabular} & \begin{tabular}{@{}l@{}} $x_{3}^{3}x_{4}x_{5}^{5}x_{6}^{6}x_{7}^{6}x_{8}^{6}x_{9}^{6}$ \end{tabular} & \begin{tabular}{@{}l@{}} $- 2^2$ \end{tabular}\\ \hline \\ 
$0$ & $(7, 3, 0)$ & \begin{tabular}{@{}l@{}} $(1, 0, 0, 1, 0, 0, 0, 0, 0, 1)$ \end{tabular} & \begin{tabular}{@{}l@{}} 35 \end{tabular} & \begin{tabular}{@{}l@{}} $x_{3}^{3}x_{4}x_{5}^{5}x_{6}^{6}x_{7}^{6}x_{8}^{6}x_{9}^{6}x_{10}^{2}$ \end{tabular} & \begin{tabular}{@{}l@{}} $- 2^2$ \end{tabular}\\ \hline \\ 
$0$ & $(8, 1, 1)$ & \begin{tabular}{@{}l@{}} $(1, 0, 0, 0, 2, 0, 0, 0, 0, 0)$ \end{tabular} & \begin{tabular}{@{}l@{}} 44 \end{tabular} & \begin{tabular}{@{}l@{}} $x_{3}^{2}x_{4}^{7}x_{6}^{7}x_{7}^{7}x_{8}^{7}x_{9}^{7}x_{10}^{7}$ \end{tabular} & \begin{tabular}{@{}l@{}} $- 2^2$ \end{tabular}\\ \hline \\ 
$0$ & $(8, 2, 0)$ & \begin{tabular}{@{}l@{}} $(1, 0, 0, 0, 1, 0, 0, 0, 0, 0)$ \end{tabular} & \begin{tabular}{@{}l@{}} 42 \end{tabular} & \begin{tabular}{@{}l@{}} $x_{3}^{2}x_{4}^{5}x_{5}x_{6}^{6}x_{7}^{7}x_{8}^{7}x_{9}^{7}x_{10}^{7}$ \end{tabular} & \begin{tabular}{@{}l@{}} $- 2^2$ \end{tabular}\\ \hline \\ 
$0$ & $(9, 1, 0)$ & \begin{tabular}{@{}l@{}} $(0, 0, 0, 1, 0, 0, 0, 0, 0, 0)$ \end{tabular} & \begin{tabular}{@{}l@{}} 66 \end{tabular} & \begin{tabular}{@{}l@{}} $x_{1}^{5}x_{2}^{6}x_{3}^{8}x_{5}^{7}x_{6}^{8}x_{7}^{8}x_{8}^{8}x_{9}^{8}x_{10}^{8}$ \end{tabular} & \begin{tabular}{@{}l@{}} $2^2 \cdot 7$ \end{tabular}\\ \hline \\ 
$1$ & $(3, 5, 2)$ & \begin{tabular}{@{}l@{}} $(0, 1, 1, 0, 1, 1, 1, 2, 0, 2)$ \end{tabular} & \begin{tabular}{@{}l@{}} 27 \end{tabular} & \begin{tabular}{@{}l@{}} $x_{1}x_{2}^{4}x_{3}^{4}x_{4}^{2}x_{5}^{4}x_{6}^{4}x_{7}^{4}x_{8}x_{9}^{2}x_{10}$ \\ $x_{1}^{2}x_{2}^{3}x_{3}^{4}x_{4}^{2}x_{5}^{4}x_{6}^{4}x_{7}^{4}x_{8}x_{9}^{2}x_{10}$ \end{tabular} & \begin{tabular}{@{}l@{}} $- 73$ \\ $- 233$ \end{tabular}\\ \hline \\ 
$1$ & $(2, 5, 3)$ & \begin{tabular}{@{}l@{}} $(0, 1, 0, 1, 2, 1, 1, 1, 2, 2)$ \end{tabular} & \begin{tabular}{@{}l@{}} 28 \end{tabular} & \begin{tabular}{@{}l@{}} $x_{1}x_{2}^{4}x_{3}x_{4}^{4}x_{5}^{2}x_{6}^{4}x_{7}^{4}x_{8}^{4}x_{9}^{2}x_{10}^{2}$ \end{tabular} & \begin{tabular}{@{}l@{}} $7$ \end{tabular}\\ \hline \\ 
$1$ & $(4, 5, 1)$ & \begin{tabular}{@{}l@{}} $(0, 1, 1, 0, 1, 1, 1, 0, 0, 2)$ \end{tabular} & \begin{tabular}{@{}l@{}} 31 \end{tabular} & \begin{tabular}{@{}l@{}} $x_{1}^{2}x_{2}^{4}x_{3}^{4}x_{4}^{3}x_{5}^{4}x_{6}^{4}x_{7}^{4}x_{8}^{3}x_{9}^{3}$ \\ $x_{1}^{3}x_{2}^{3}x_{3}^{4}x_{4}^{3}x_{5}^{4}x_{6}^{4}x_{7}^{4}x_{8}^{3}x_{9}^{3}$ \end{tabular} & \begin{tabular}{@{}l@{}} $- 5 \cdot 43$ \\ $- 3 \cdot 13 \cdot 17$ \end{tabular}\\ \hline \\ 
$1$ & $(1, 5, 4)$ & \begin{tabular}{@{}l@{}} $(1, 2, 2, 0, 2, 2, 1, 1, 1, 1)$ \end{tabular} & \begin{tabular}{@{}l@{}} 32 \end{tabular} & \begin{tabular}{@{}l@{}} $x_{1}^{4}x_{2}^{3}x_{3}^{3}x_{5}^{3}x_{6}^{3}x_{7}^{4}x_{8}^{4}x_{9}^{4}x_{10}^{4}$ \end{tabular} & \begin{tabular}{@{}l@{}} $- 7$ \end{tabular}\\ \hline \\ 
$1$ & $(0, 5, 5)$ & \begin{tabular}{@{}l@{}} $(1, 2, 1, 2, 1, 1, 1, 2, 2, 2)$ \end{tabular} & \begin{tabular}{@{}l@{}} 38 \end{tabular} & \begin{tabular}{@{}l@{}} $x_{1}^{3}x_{2}^{4}x_{3}^{4}x_{4}^{4}x_{5}^{4}x_{6}^{3}x_{7}^{4}x_{8}^{4}x_{9}^{4}x_{10}^{4}$ \end{tabular} & \begin{tabular}{@{}l@{}} $- 2 \cdot 3 \cdot 5$ \end{tabular}\\ \hline \\ 
$1$ & $(2, 6, 2)$ & \begin{tabular}{@{}l@{}} $(0, 1, 0, 1, 1, 2, 2, 1, 1, 1)$ \end{tabular} & \begin{tabular}{@{}l@{}} 30 \end{tabular} & \begin{tabular}{@{}l@{}} $x_{1}x_{2}^{2}x_{3}x_{4}^{4}x_{5}^{5}x_{6}x_{7}x_{8}^{5}x_{9}^{5}x_{10}^{5}$ \end{tabular} & \begin{tabular}{@{}l@{}} $- 5$ \end{tabular}\\ \hline \\ 
$1$ & $(3, 6, 1)$ & \begin{tabular}{@{}l@{}} $(0, 1, 0, 1, 1, 0, 2, 1, 1, 1)$ \end{tabular} & \begin{tabular}{@{}l@{}} 32 \end{tabular} & \begin{tabular}{@{}l@{}} $x_{1}x_{2}^{2}x_{3}^{2}x_{4}^{5}x_{5}^{5}x_{6}^{2}x_{8}^{5}x_{9}^{5}x_{10}^{5}$ \end{tabular} & \begin{tabular}{@{}l@{}} $2^2$ \end{tabular}\\ \hline \\ 
$1$ & $(1, 6, 3)$ & \begin{tabular}{@{}l@{}} $(0, 1, 0, 1, 1, 0, 2, 1, 1, 1)$ \end{tabular} & \begin{tabular}{@{}l@{}} 32 \end{tabular} & \begin{tabular}{@{}l@{}} $x_{1}x_{2}^{2}x_{3}^{2}x_{4}^{5}x_{5}^{5}x_{6}^{2}x_{8}^{5}x_{9}^{5}x_{10}^{5}$ \end{tabular} & \begin{tabular}{@{}l@{}} $2^2$ \end{tabular}\\ \hline \\ 
$1$ & $(4, 6, 0)$ & \begin{tabular}{@{}l@{}} $(0, 1, 1, 0, 1, 1, 1, 1, 0, 0)$ \end{tabular} & \begin{tabular}{@{}l@{}} 34 \end{tabular} & \begin{tabular}{@{}l@{}} $x_{1}x_{2}x_{3}^{3}x_{4}^{3}x_{5}^{5}x_{6}^{5}x_{7}^{5}x_{8}^{5}x_{9}^{3}x_{10}^{3}$ \end{tabular} & \begin{tabular}{@{}l@{}} $- 2 \cdot 3$ \end{tabular}\\ \hline \\ 
$1$ & $(0, 6, 4)$ & \begin{tabular}{@{}l@{}} $(2, 1, 1, 2, 1, 1, 1, 1, 2, 2)$ \end{tabular} & \begin{tabular}{@{}l@{}} 37 \end{tabular} & \begin{tabular}{@{}l@{}} $x_{1}^{2}x_{2}^{2}x_{3}^{4}x_{4}^{3}x_{5}^{5}x_{6}^{5}x_{7}^{5}x_{8}^{5}x_{9}^{3}x_{10}^{3}$ \end{tabular} & \begin{tabular}{@{}l@{}} $2^2$ \end{tabular}\\ \hline \\ 
$1$ & $(2, 7, 1)$ & \begin{tabular}{@{}l@{}} $(0, 1, 1, 0, 1, 1, 1, 1, 1, 2)$ \end{tabular} & \begin{tabular}{@{}l@{}} 35 \end{tabular} & \begin{tabular}{@{}l@{}} $x_{1}x_{2}x_{3}^{3}x_{4}x_{5}^{5}x_{6}^{6}x_{7}^{6}x_{8}^{6}x_{9}^{6}$ \end{tabular} & \begin{tabular}{@{}l@{}} $- 2$ \end{tabular}\\ \hline \\ 
$1$ & $(1, 7, 2)$ & \begin{tabular}{@{}l@{}} $(1, 0, 1, 2, 1, 1, 1, 1, 1, 2)$ \end{tabular} & \begin{tabular}{@{}l@{}} 40 \end{tabular} & \begin{tabular}{@{}l@{}} $x_{1}^{3}x_{3}^{5}x_{4}x_{5}^{6}x_{6}^{6}x_{7}^{6}x_{8}^{6}x_{9}^{6}x_{10}$ \end{tabular} & \begin{tabular}{@{}l@{}} $3^2$ \end{tabular}\\ \hline \\ 
$1$ & $(3, 7, 0)$ & \begin{tabular}{@{}l@{}} $(0, 1, 1, 0, 1, 1, 1, 1, 1, 0)$ \end{tabular} & \begin{tabular}{@{}l@{}} 36 \end{tabular} & \begin{tabular}{@{}l@{}} $x_{1}x_{2}x_{3}^{2}x_{4}^{2}x_{5}^{4}x_{6}^{6}x_{7}^{6}x_{8}^{6}x_{9}^{6}x_{10}^{2}$ \end{tabular} & \begin{tabular}{@{}l@{}} $1$ \end{tabular}\\ \hline \\ 
$1$ & $(0, 7, 3)$ & \begin{tabular}{@{}l@{}} $(2, 1, 1, 2, 1, 1, 1, 1, 1, 2)$ \end{tabular} & \begin{tabular}{@{}l@{}} 40 \end{tabular} & \begin{tabular}{@{}l@{}} $x_{1}^{2}x_{2}^{2}x_{3}^{3}x_{4}^{2}x_{5}^{5}x_{6}^{6}x_{7}^{6}x_{8}^{6}x_{9}^{6}x_{10}^{2}$ \end{tabular} & \begin{tabular}{@{}l@{}} $- 2^2$ \end{tabular}\\ \hline \\ 
$1$ & $(1, 8, 1)$ & \begin{tabular}{@{}l@{}} $(1, 0, 1, 1, 1, 1, 1, 1, 1, 2)$ \end{tabular} & \begin{tabular}{@{}l@{}} 43 \end{tabular} & \begin{tabular}{@{}l@{}} $x_{1}^{3}x_{3}^{3}x_{4}^{3}x_{5}^{6}x_{6}^{7}x_{7}^{7}x_{8}^{7}x_{9}^{7}$ \end{tabular} & \begin{tabular}{@{}l@{}} $1$ \end{tabular}\\ \hline \\ 
$1$ & $(2, 8, 0)$ & \begin{tabular}{@{}l@{}} $(1, 0, 1, 1, 1, 1, 1, 1, 1, 0)$ \end{tabular} & \begin{tabular}{@{}l@{}} 43 \end{tabular} & \begin{tabular}{@{}l@{}} $x_{1}^{2}x_{2}x_{3}^{3}x_{4}^{3}x_{5}^{5}x_{6}^{7}x_{7}^{7}x_{8}^{7}x_{9}^{7}x_{10}$ \end{tabular} & \begin{tabular}{@{}l@{}} $-1$ \end{tabular}\\ \hline \\ 
$1$ & $(0, 8, 2)$ & \begin{tabular}{@{}l@{}} $(1, 2, 1, 1, 1, 1, 1, 1, 1, 2)$ \end{tabular} & \begin{tabular}{@{}l@{}} 47 \end{tabular} & \begin{tabular}{@{}l@{}} $x_{1}^{3}x_{2}x_{3}^{4}x_{4}^{3}x_{5}^{7}x_{6}^{7}x_{7}^{7}x_{8}^{7}x_{9}^{7}x_{10}$ \end{tabular} & \begin{tabular}{@{}l@{}} $2^2$ \end{tabular}\\ \hline \\ 
$1$ & $(1, 9, 0)$ & \begin{tabular}{@{}l@{}} $(1, 0, 1, 1, 1, 1, 1, 1, 1, 1)$ \end{tabular} & \begin{tabular}{@{}l@{}} 51 \end{tabular} & \begin{tabular}{@{}l@{}} $x_{1}^{3}x_{3}^{2}x_{4}^{4}x_{5}^{5}x_{6}^{5}x_{7}^{8}x_{8}^{8}x_{9}^{8}x_{10}^{8}$ \end{tabular} & \begin{tabular}{@{}l@{}} $2$ \end{tabular}\\ \hline \\ 
$1$ & $(0, 9, 1)$ & \begin{tabular}{@{}l@{}} $(1, 2, 1, 1, 1, 1, 1, 1, 1, 1)$ \end{tabular} & \begin{tabular}{@{}l@{}} 54 \end{tabular} & \begin{tabular}{@{}l@{}} $x_{1}^{2}x_{2}^{2}x_{3}^{3}x_{4}^{3}x_{5}^{5}x_{6}^{6}x_{7}^{6}x_{8}^{9}x_{9}^{9}x_{10}^{9}$ \end{tabular} & \begin{tabular}{@{}l@{}} $2$ \end{tabular}\\ \hline \\ 
$1$ & $(0, 10, 0)$ & \begin{tabular}{@{}l@{}} $(1, 1, 1, 1, 1, 1, 1, 1, 1, 1)$ \end{tabular} & \begin{tabular}{@{}l@{}} 65 \end{tabular} & \begin{tabular}{@{}l@{}} $x_{1}^{3}x_{2}^{4}x_{3}^{4}x_{4}^{4}x_{5}^{6}x_{6}^{8}x_{7}^{9}x_{8}^{9}x_{9}^{9}x_{10}^{9}$ \end{tabular} & \begin{tabular}{@{}l@{}} $2 \cdot 5$ \end{tabular}\\ \hline \\ 
$2$ & $(3, 2, 5)$ & \begin{tabular}{@{}l@{}} $(0, 2, 2, 0, 2, 2, 2, 1, 0, 1)$ \end{tabular} & \begin{tabular}{@{}l@{}} 27 \end{tabular} & \begin{tabular}{@{}l@{}} $x_{1}x_{2}^{4}x_{3}^{4}x_{4}^{2}x_{5}^{4}x_{6}^{4}x_{7}^{4}x_{8}x_{9}^{2}x_{10}$ \\ $x_{1}^{2}x_{2}^{3}x_{3}^{4}x_{4}^{2}x_{5}^{4}x_{6}^{4}x_{7}^{4}x_{8}x_{9}^{2}x_{10}$ \end{tabular} & \begin{tabular}{@{}l@{}} $- 73$ \\ $- 233$ \end{tabular}\\ \hline \\ 
$2$ & $(2, 3, 5)$ & \begin{tabular}{@{}l@{}} $(0, 2, 0, 2, 1, 2, 2, 2, 1, 1)$ \end{tabular} & \begin{tabular}{@{}l@{}} 28 \end{tabular} & \begin{tabular}{@{}l@{}} $x_{1}x_{2}^{4}x_{3}x_{4}^{4}x_{5}^{2}x_{6}^{4}x_{7}^{4}x_{8}^{4}x_{9}^{2}x_{10}^{2}$ \end{tabular} & \begin{tabular}{@{}l@{}} $7$ \end{tabular}\\ \hline \\ 
$2$ & $(4, 1, 5)$ & \begin{tabular}{@{}l@{}} $(0, 2, 2, 0, 2, 2, 2, 0, 0, 1)$ \end{tabular} & \begin{tabular}{@{}l@{}} 31 \end{tabular} & \begin{tabular}{@{}l@{}} $x_{1}^{2}x_{2}^{4}x_{3}^{4}x_{4}^{3}x_{5}^{4}x_{6}^{4}x_{7}^{4}x_{8}^{3}x_{9}^{3}$ \\ $x_{1}^{3}x_{2}^{3}x_{3}^{4}x_{4}^{3}x_{5}^{4}x_{6}^{4}x_{7}^{4}x_{8}^{3}x_{9}^{3}$ \end{tabular} & \begin{tabular}{@{}l@{}} $- 5 \cdot 43$ \\ $- 3 \cdot 13 \cdot 17$ \end{tabular}\\ \hline \\ 
$2$ & $(1, 4, 5)$ & \begin{tabular}{@{}l@{}} $(2, 1, 1, 0, 1, 1, 2, 2, 2, 2)$ \end{tabular} & \begin{tabular}{@{}l@{}} 32 \end{tabular} & \begin{tabular}{@{}l@{}} $x_{1}^{4}x_{2}^{3}x_{3}^{3}x_{5}^{3}x_{6}^{3}x_{7}^{4}x_{8}^{4}x_{9}^{4}x_{10}^{4}$ \end{tabular} & \begin{tabular}{@{}l@{}} $- 7$ \end{tabular}\\ \hline \\ 
$2$ & $(5, 0, 5)$ & \begin{tabular}{@{}l@{}} $(0, 2, 0, 2, 0, 0, 0, 2, 2, 2)$ \end{tabular} & \begin{tabular}{@{}l@{}} 33 \end{tabular} & \begin{tabular}{@{}l@{}} $x_{1}^{2}x_{2}x_{3}^{4}x_{4}^{4}x_{5}^{4}x_{6}^{4}x_{7}^{4}x_{8}^{2}x_{9}^{4}x_{10}^{4}$ \end{tabular} & \begin{tabular}{@{}l@{}} $3 \cdot 5^2$ \end{tabular}\\ \hline \\ 
$2$ & $(2, 2, 6)$ & \begin{tabular}{@{}l@{}} $(0, 2, 0, 2, 2, 1, 1, 2, 2, 2)$ \end{tabular} & \begin{tabular}{@{}l@{}} 30 \end{tabular} & \begin{tabular}{@{}l@{}} $x_{1}x_{2}^{2}x_{3}x_{4}^{4}x_{5}^{5}x_{6}x_{7}x_{8}^{5}x_{9}^{5}x_{10}^{5}$ \end{tabular} & \begin{tabular}{@{}l@{}} $- 5$ \end{tabular}\\ \hline \\ 
$2$ & $(3, 1, 6)$ & \begin{tabular}{@{}l@{}} $(0, 2, 0, 2, 2, 0, 1, 2, 2, 2)$ \end{tabular} & \begin{tabular}{@{}l@{}} 32 \end{tabular} & \begin{tabular}{@{}l@{}} $x_{1}x_{2}^{2}x_{3}^{2}x_{4}^{5}x_{5}^{5}x_{6}^{2}x_{8}^{5}x_{9}^{5}x_{10}^{5}$ \end{tabular} & \begin{tabular}{@{}l@{}} $2^2$ \end{tabular}\\ \hline \\ 
$2$ & $(1, 3, 6)$ & \begin{tabular}{@{}l@{}} $(0, 2, 0, 2, 2, 0, 1, 2, 2, 2)$ \end{tabular} & \begin{tabular}{@{}l@{}} 32 \end{tabular} & \begin{tabular}{@{}l@{}} $x_{1}x_{2}^{2}x_{3}^{2}x_{4}^{5}x_{5}^{5}x_{6}^{2}x_{8}^{5}x_{9}^{5}x_{10}^{5}$ \end{tabular} & \begin{tabular}{@{}l@{}} $2^2$ \end{tabular}\\ \hline \\ 
$2$ & $(4, 0, 6)$ & \begin{tabular}{@{}l@{}} $(0, 2, 2, 0, 2, 2, 2, 2, 0, 0)$ \end{tabular} & \begin{tabular}{@{}l@{}} 34 \end{tabular} & \begin{tabular}{@{}l@{}} $x_{1}x_{2}x_{3}^{3}x_{4}^{3}x_{5}^{5}x_{6}^{5}x_{7}^{5}x_{8}^{5}x_{9}^{3}x_{10}^{3}$ \end{tabular} & \begin{tabular}{@{}l@{}} $- 2 \cdot 3$ \end{tabular}\\ \hline \\ 
$2$ & $(0, 4, 6)$ & \begin{tabular}{@{}l@{}} $(1, 2, 2, 1, 2, 2, 2, 2, 1, 1)$ \end{tabular} & \begin{tabular}{@{}l@{}} 37 \end{tabular} & \begin{tabular}{@{}l@{}} $x_{1}^{2}x_{2}^{2}x_{3}^{4}x_{4}^{3}x_{5}^{5}x_{6}^{5}x_{7}^{5}x_{8}^{5}x_{9}^{3}x_{10}^{3}$ \end{tabular} & \begin{tabular}{@{}l@{}} $2^2$ \end{tabular}\\ \hline \\ 
$2$ & $(2, 1, 7)$ & \begin{tabular}{@{}l@{}} $(0, 2, 2, 0, 2, 2, 2, 2, 2, 1)$ \end{tabular} & \begin{tabular}{@{}l@{}} 35 \end{tabular} & \begin{tabular}{@{}l@{}} $x_{1}x_{2}x_{3}^{3}x_{4}x_{5}^{5}x_{6}^{6}x_{7}^{6}x_{8}^{6}x_{9}^{6}$ \end{tabular} & \begin{tabular}{@{}l@{}} $- 2$ \end{tabular}\\ \hline \\ 
$2$ & $(1, 2, 7)$ & \begin{tabular}{@{}l@{}} $(2, 0, 2, 1, 2, 2, 2, 2, 2, 1)$ \end{tabular} & \begin{tabular}{@{}l@{}} 40 \end{tabular} & \begin{tabular}{@{}l@{}} $x_{1}^{3}x_{3}^{5}x_{4}x_{5}^{6}x_{6}^{6}x_{7}^{6}x_{8}^{6}x_{9}^{6}x_{10}$ \end{tabular} & \begin{tabular}{@{}l@{}} $3^2$ \end{tabular}\\ \hline \\ 
$2$ & $(3, 0, 7)$ & \begin{tabular}{@{}l@{}} $(0, 2, 2, 0, 2, 2, 2, 2, 2, 0)$ \end{tabular} & \begin{tabular}{@{}l@{}} 36 \end{tabular} & \begin{tabular}{@{}l@{}} $x_{1}x_{2}x_{3}^{2}x_{4}^{2}x_{5}^{4}x_{6}^{6}x_{7}^{6}x_{8}^{6}x_{9}^{6}x_{10}^{2}$ \end{tabular} & \begin{tabular}{@{}l@{}} $1$ \end{tabular}\\ \hline \\ 
$2$ & $(0, 3, 7)$ & \begin{tabular}{@{}l@{}} $(1, 2, 2, 1, 2, 2, 2, 2, 2, 1)$ \end{tabular} & \begin{tabular}{@{}l@{}} 40 \end{tabular} & \begin{tabular}{@{}l@{}} $x_{1}^{2}x_{2}^{2}x_{3}^{3}x_{4}^{2}x_{5}^{5}x_{6}^{6}x_{7}^{6}x_{8}^{6}x_{9}^{6}x_{10}^{2}$ \end{tabular} & \begin{tabular}{@{}l@{}} $- 2^2$ \end{tabular}\\ \hline \\ 
$2$ & $(1, 1, 8)$ & \begin{tabular}{@{}l@{}} $(2, 0, 2, 2, 2, 2, 2, 2, 2, 1)$ \end{tabular} & \begin{tabular}{@{}l@{}} 43 \end{tabular} & \begin{tabular}{@{}l@{}} $x_{1}^{3}x_{3}^{3}x_{4}^{3}x_{5}^{6}x_{6}^{7}x_{7}^{7}x_{8}^{7}x_{9}^{7}$ \end{tabular} & \begin{tabular}{@{}l@{}} $1$ \end{tabular}\\ \hline \\ 
$2$ & $(2, 0, 8)$ & \begin{tabular}{@{}l@{}} $(2, 0, 2, 2, 2, 2, 2, 2, 2, 0)$ \end{tabular} & \begin{tabular}{@{}l@{}} 43 \end{tabular} & \begin{tabular}{@{}l@{}} $x_{1}^{2}x_{2}x_{3}^{3}x_{4}^{3}x_{5}^{5}x_{6}^{7}x_{7}^{7}x_{8}^{7}x_{9}^{7}x_{10}$ \end{tabular} & \begin{tabular}{@{}l@{}} $-1$ \end{tabular}\\ \hline \\ 
$2$ & $(0, 2, 8)$ & \begin{tabular}{@{}l@{}} $(2, 1, 2, 2, 2, 2, 2, 2, 2, 1)$ \end{tabular} & \begin{tabular}{@{}l@{}} 47 \end{tabular} & \begin{tabular}{@{}l@{}} $x_{1}^{3}x_{2}x_{3}^{4}x_{4}^{3}x_{5}^{7}x_{6}^{7}x_{7}^{7}x_{8}^{7}x_{9}^{7}x_{10}$ \end{tabular} & \begin{tabular}{@{}l@{}} $2^2$ \end{tabular}\\ \hline \\ 
$2$ & $(1, 0, 9)$ & \begin{tabular}{@{}l@{}} $(2, 0, 2, 2, 2, 2, 2, 2, 2, 2)$ \end{tabular} & \begin{tabular}{@{}l@{}} 51 \end{tabular} & \begin{tabular}{@{}l@{}} $x_{1}^{3}x_{3}^{2}x_{4}^{4}x_{5}^{5}x_{6}^{5}x_{7}^{8}x_{8}^{8}x_{9}^{8}x_{10}^{8}$ \end{tabular} & \begin{tabular}{@{}l@{}} $2$ \end{tabular}\\ \hline \\ 
$2$ & $(0, 1, 9)$ & \begin{tabular}{@{}l@{}} $(2, 1, 2, 2, 2, 2, 2, 2, 2, 2)$ \end{tabular} & \begin{tabular}{@{}l@{}} 54 \end{tabular} & \begin{tabular}{@{}l@{}} $x_{1}^{2}x_{2}^{2}x_{3}^{3}x_{4}^{3}x_{5}^{5}x_{6}^{6}x_{7}^{6}x_{8}^{9}x_{9}^{9}x_{10}^{9}$ \end{tabular} & \begin{tabular}{@{}l@{}} $2$ \end{tabular}\\ \hline \\ 
$2$ & $(0, 0, 10)$ & \begin{tabular}{@{}l@{}} $(2, 2, 2, 2, 2, 2, 2, 2, 2, 2)$ \end{tabular} & \begin{tabular}{@{}l@{}} 65 \end{tabular} & \begin{tabular}{@{}l@{}} $x_{1}^{3}x_{2}^{4}x_{3}^{4}x_{4}^{4}x_{5}^{6}x_{6}^{8}x_{7}^{9}x_{8}^{9}x_{9}^{9}x_{10}^{9}$ \end{tabular} & \begin{tabular}{@{}l@{}} $2 \cdot 5$ \end{tabular}\\ \hline \\
\end{longtable}

\footnotesize
\begin{longtable}{llllll}
\caption{Monomials and their coefficients sufficient for the proof of Theorem~\ref{th:main2} in the case $t=6$ and~$G=D_{2p}$.}\label{tab:6_D}\\
\hline
$\bar{a}$ & $\bm{\lambda}$ & $\mathbf{a''}$ & deg & monomial/s & coefficient/s \\
\hline
\endfirsthead
\hline
$\bar{a}$ & $\bm{\lambda}$ & $\mathbf{a''}$ & deg & monomial/s & coefficient/s \\
\hline
\endhead
$0$ & $(5, 5)$ & \begin{tabular}{@{}l@{}} $(1, 0, 0, 0, 1, 1, 1, 0, 1, 0)$ \end{tabular} & \begin{tabular}{@{}l@{}} $34$ \end{tabular} & \begin{tabular}{@{}l@{}} $x_{2}^{2}x_{3}^{4}x_{4}^{4}x_{5}^{4}x_{6}^{4}x_{7}^{4}x_{8}^{4}x_{9}^{4}x_{10}^{4}$ \\ $x_{2}^{3}x_{3}^{3}x_{4}^{4}x_{5}^{4}x_{6}^{4}x_{7}^{4}x_{8}^{4}x_{9}^{4}x_{10}^{4}$ \end{tabular} & \begin{tabular}{@{}l@{}} $2^6 \cdot 5 \cdot 13$ \\ $2^4 \cdot 17 \cdot 23$ \end{tabular}\\ \hline \\ 
$0$ & $(6, 4)$ & \begin{tabular}{@{}l@{}} $(1, 0, 0, 0, 1, 0, 1, 0, 1, 0)$ \end{tabular} & \begin{tabular}{@{}l@{}} $38$ \end{tabular} & \begin{tabular}{@{}l@{}} $x_{2}^{4}x_{3}^{5}x_{4}^{5}x_{5}^{3}x_{6}^{5}x_{7}^{3}x_{8}^{5}x_{9}^{3}x_{10}^{5}$ \\ $x_{1}x_{2}^{3}x_{3}^{5}x_{4}^{5}x_{5}^{3}x_{6}^{5}x_{7}^{3}x_{8}^{5}x_{9}^{3}x_{10}^{5}$ \end{tabular} & \begin{tabular}{@{}l@{}} $- 3 \cdot 41543$ \\ $2^2 \cdot 3 \cdot 31 \cdot 617$ \end{tabular}\\ \hline \\ 
$0$ & $(7, 3)$ & \begin{tabular}{@{}l@{}} $(1, 0, 0, 0, 1, 0, 0, 0, 1, 0)$ \end{tabular} & \begin{tabular}{@{}l@{}} $36$ \end{tabular} & \begin{tabular}{@{}l@{}} $x_{3}^{2}x_{4}^{6}x_{5}^{2}x_{6}^{6}x_{7}^{6}x_{8}^{6}x_{9}^{2}x_{10}^{6}$ \end{tabular} & \begin{tabular}{@{}l@{}} $- 2^3 \cdot 3$ \end{tabular}\\ \hline \\ 
$0$ & $(8, 2)$ & \begin{tabular}{@{}l@{}} $(1, 0, 0, 0, 1, 0, 0, 0, 0, 0)$ \end{tabular} & \begin{tabular}{@{}l@{}} $45$ \end{tabular} & \begin{tabular}{@{}l@{}} $x_{3}^{2}x_{4}^{7}x_{5}x_{6}^{7}x_{7}^{7}x_{8}^{7}x_{9}^{7}x_{10}^{7}$ \end{tabular} & \begin{tabular}{@{}l@{}} $- 2^2$ \end{tabular}\\ \hline \\ 
$0$ & $(9, 1)$ & \begin{tabular}{@{}l@{}} $(0, 0, 0, 0, 1, 0, 0, 0, 0, 0)$ \end{tabular} & \begin{tabular}{@{}l@{}} $66$ \end{tabular} & \begin{tabular}{@{}l@{}} $x_{1}^{5}x_{2}^{7}x_{3}^{7}x_{4}^{8}x_{6}^{7}x_{7}^{8}x_{8}^{8}x_{9}^{8}x_{10}^{8}$ \end{tabular} & \begin{tabular}{@{}l@{}} $- 2^5 \cdot 3$ \end{tabular}\\ \hline \\ 
$1$ & $(5, 5)$ & \begin{tabular}{@{}l@{}} $(1, 0, 1, 1, 1, 0, 0, 0, 1, 0)$ \end{tabular} & \begin{tabular}{@{}l@{}} $39$ \end{tabular} & \begin{tabular}{@{}l@{}} $x_{1}^{3}x_{2}^{4}x_{3}^{4}x_{4}^{4}x_{5}^{4}x_{6}^{4}x_{7}^{4}x_{8}^{4}x_{9}^{4}x_{10}^{4}$ \\ $x_{1}^{4}x_{2}^{3}x_{3}^{4}x_{4}^{4}x_{5}^{4}x_{6}^{4}x_{7}^{4}x_{8}^{4}x_{9}^{4}x_{10}^{4}$ \end{tabular} & \begin{tabular}{@{}l@{}} $43 \cdot 97$ \\ $- 2 \cdot 3^3 \cdot 257$ \end{tabular}\\ \hline \\ 
$1$ & $(4, 6)$ & \begin{tabular}{@{}l@{}} $(0, 1, 1, 1, 1, 1, 0, 0, 1, 0)$ \end{tabular} & \begin{tabular}{@{}l@{}} $39$ \end{tabular} & \begin{tabular}{@{}l@{}} $x_{1}^{2}x_{2}^{3}x_{3}^{5}x_{4}^{5}x_{5}^{5}x_{6}^{5}x_{7}^{3}x_{8}^{3}x_{9}^{5}x_{10}^{3}$ \\ $x_{1}^{3}x_{2}^{2}x_{3}^{5}x_{4}^{5}x_{5}^{5}x_{6}^{5}x_{7}^{3}x_{8}^{3}x_{9}^{5}x_{10}^{3}$ \end{tabular} & \begin{tabular}{@{}l@{}} $- 11 \cdot 23 \cdot 61$ \\ $- 7^2 \cdot 953$ \end{tabular}\\ \hline \\ 
$1$ & $(3, 7)$ & \begin{tabular}{@{}l@{}} $(1, 0, 1, 1, 1, 1, 1, 0, 1, 0)$ \end{tabular} & \begin{tabular}{@{}l@{}} $43$ \end{tabular} & \begin{tabular}{@{}l@{}} $x_{1}^{3}x_{2}^{2}x_{3}^{4}x_{4}^{6}x_{5}^{6}x_{6}^{6}x_{7}^{6}x_{8}^{2}x_{9}^{6}x_{10}^{2}$ \\ $x_{1}^{4}x_{2}^{2}x_{3}^{3}x_{4}^{6}x_{5}^{6}x_{6}^{6}x_{7}^{6}x_{8}^{2}x_{9}^{6}x_{10}^{2}$ \end{tabular} & \begin{tabular}{@{}l@{}} $- 5 \cdot 23 \cdot 61$ \\ $2 \cdot 32117$ \end{tabular}\\ \hline \\ 
$1$ & $(2, 8)$ & \begin{tabular}{@{}l@{}} $(1, 0, 1, 1, 1, 1, 1, 1, 1, 0)$ \end{tabular} & \begin{tabular}{@{}l@{}} $48$ \end{tabular} & \begin{tabular}{@{}l@{}} $x_{1}^{4}x_{2}x_{3}^{3}x_{4}^{4}x_{5}^{7}x_{6}^{7}x_{7}^{7}x_{8}^{7}x_{9}^{7}x_{10}$ \\ $x_{1}^{4}x_{2}x_{3}^{3}x_{4}^{5}x_{5}^{6}x_{6}^{7}x_{7}^{7}x_{8}^{7}x_{9}^{7}x_{10}$ \end{tabular} & \begin{tabular}{@{}l@{}} $- 2^2 \cdot 5 \cdot 61$ \\ $2^4 \cdot 3^2 \cdot 29$ \end{tabular}\\ \hline \\ 
$1$ & $(1, 9)$ & \begin{tabular}{@{}l@{}} $(1, 0, 1, 1, 1, 1, 1, 1, 1, 1)$ \end{tabular} & \begin{tabular}{@{}l@{}} $55$ \end{tabular} & \begin{tabular}{@{}l@{}} $x_{1}^{5}x_{3}^{3}x_{4}^{4}x_{5}^{4}x_{6}^{7}x_{7}^{8}x_{8}^{8}x_{9}^{8}x_{10}^{8}$ \\ $x_{1}^{5}x_{3}^{3}x_{4}^{4}x_{5}^{5}x_{6}^{6}x_{7}^{8}x_{8}^{8}x_{9}^{8}x_{10}^{8}$ \end{tabular} & \begin{tabular}{@{}l@{}} $2 \cdot 3 \cdot 61$ \\ $- 2^4 \cdot 83$ \end{tabular}\\ \hline \\ 
$1$ & $(0, 10)$ & \begin{tabular}{@{}l@{}} $(1, 1, 1, 1, 1, 1, 1, 1, 1, 1)$ \end{tabular} & \begin{tabular}{@{}l@{}} $66$ \end{tabular} & \begin{tabular}{@{}l@{}} $x_{1}^{4}x_{2}^{4}x_{3}^{5}x_{4}^{8}x_{6}^{9}x_{7}^{9}x_{8}^{9}x_{9}^{9}x_{10}^{9}$ \\ $x_{1}^{4}x_{2}^{4}x_{3}^{5}x_{4}^{9}x_{6}^{8}x_{7}^{9}x_{8}^{9}x_{9}^{9}x_{10}^{9}$ \end{tabular} & \begin{tabular}{@{}l@{}} $- 2 \cdot 457$ \\ $- 2^8 \cdot 3^2$ \end{tabular}\\ \hline
\end{longtable}
\footnotesize
\begin{longtable}{llllll}
\caption{Monomials and their coefficients sufficient for the proof of Theorem~\ref{th:main2} in the case $t=6$ and~$G=G_{3p}$.}\label{tab:6_G}\\
\hline
$\bar{a}$ & $\bm{\lambda}$ & $\mathbf{a''}$ & deg & monomial/s & coefficient/s \\
\hline
\endfirsthead
\hline
$\bar{a}$ & $\bm{\lambda}$ & $\mathbf{a''}$ & deg & monomial/s & coefficient/s \\
\hline
\endhead
$0$ & $(5, 3, 2)$ & \begin{tabular}{@{}l@{}} $(1, 0, 0, 1, 0, 0, 0, 2, 1, 2)$ \end{tabular} & \begin{tabular}{@{}l@{}} 25 \end{tabular} & \begin{tabular}{@{}l@{}} $x_{1}^{2}x_{2}^{4}x_{3}^{4}x_{4}^{2}x_{5}^{2}x_{6}^{4}x_{7}^{4}x_{9}^{2}x_{10}$ \end{tabular} & \begin{tabular}{@{}l@{}} $-r - 1$ \end{tabular}\\ \hline \\ 
$0$ & $(5, 4, 1)$ & \begin{tabular}{@{}l@{}} $(1, 0, 0, 1, 0, 0, 0, 1, 1, 2)$ \end{tabular} & \begin{tabular}{@{}l@{}} 22 \end{tabular} & \begin{tabular}{@{}l@{}} $x_{1}^{3}x_{2}^{4}x_{3}^{3}x_{4}^{3}x_{5}x_{6}^{2}x_{7}^{4}x_{8}x_{9}$ \end{tabular} & \begin{tabular}{@{}l@{}} $1$ \end{tabular}\\ \hline \\ 
$0$ & $(5, 5, 0)$ & \begin{tabular}{@{}l@{}} $(0, 1, 0, 1, 0, 0, 0, 1, 1, 1)$ \end{tabular} & \begin{tabular}{@{}l@{}} 33 \end{tabular} & \begin{tabular}{@{}l@{}} $x_{1}^{2}x_{2}^{2}x_{3}^{2}x_{4}^{3}x_{5}^{4}x_{6}^{4}x_{7}^{4}x_{8}^{4}x_{9}^{4}x_{10}^{4}$ \end{tabular} & \begin{tabular}{@{}l@{}} $124r + 294$ \end{tabular}\\ \hline \\ 
$0$ & $(6, 2, 2)$ & \begin{tabular}{@{}l@{}} $(1, 0, 0, 1, 0, 0, 0, 0, 2, 2)$ \end{tabular} & \begin{tabular}{@{}l@{}} 25 \end{tabular} & \begin{tabular}{@{}l@{}} $x_{1}x_{2}^{5}x_{3}^{2}x_{4}x_{5}^{3}x_{6}^{2}x_{7}^{5}x_{8}^{5}x_{10}$ \end{tabular} & \begin{tabular}{@{}l@{}} $1$ \end{tabular}\\ \hline \\ 
$0$ & $(6, 3, 1)$ & \begin{tabular}{@{}l@{}} $(1, 0, 0, 1, 0, 0, 0, 0, 1, 2)$ \end{tabular} & \begin{tabular}{@{}l@{}} 30 \end{tabular} & \begin{tabular}{@{}l@{}} $x_{1}^{2}x_{2}^{3}x_{3}^{4}x_{4}x_{5}^{5}x_{6}^{5}x_{7}^{4}x_{8}^{4}x_{9}^{2}$ \end{tabular} & \begin{tabular}{@{}l@{}} $1$ \end{tabular}\\ \hline \\ 
$0$ & $(6, 4, 0)$ & \begin{tabular}{@{}l@{}} $(1, 0, 0, 1, 0, 0, 0, 0, 1, 1)$ \end{tabular} & \begin{tabular}{@{}l@{}} 28 \end{tabular} & \begin{tabular}{@{}l@{}} $x_{2}^{2}x_{3}^{3}x_{4}x_{5}^{4}x_{6}^{4}x_{7}^{5}x_{8}^{4}x_{9}^{2}x_{10}^{3}$ \end{tabular} & \begin{tabular}{@{}l@{}} $2$ \end{tabular}\\ \hline \\ 
$0$ & $(7, 2, 1)$ & \begin{tabular}{@{}l@{}} $(1, 0, 0, 1, 0, 0, 0, 0, 0, 2)$ \end{tabular} & \begin{tabular}{@{}l@{}} 33 \end{tabular} & \begin{tabular}{@{}l@{}} $x_{3}^{3}x_{4}x_{5}^{6}x_{6}^{5}x_{7}^{6}x_{8}^{6}x_{9}^{6}$ \end{tabular} & \begin{tabular}{@{}l@{}} $2$ \end{tabular}\\ \hline \\ 
$0$ & $(7, 3, 0)$ & \begin{tabular}{@{}l@{}} $(1, 0, 0, 1, 0, 0, 0, 0, 0, 1)$ \end{tabular} & \begin{tabular}{@{}l@{}} 35 \end{tabular} & \begin{tabular}{@{}l@{}} $x_{3}^{3}x_{4}x_{5}^{6}x_{6}^{6}x_{7}^{6}x_{8}^{5}x_{9}^{6}x_{10}^{2}$ \end{tabular} & \begin{tabular}{@{}l@{}} $2$ \end{tabular}\\ \hline \\ 
$0$ & $(8, 1, 1)$ & \begin{tabular}{@{}l@{}} $(1, 0, 0, 0, 2, 0, 0, 0, 0, 0)$ \end{tabular} & \begin{tabular}{@{}l@{}} 44 \end{tabular} & \begin{tabular}{@{}l@{}} $x_{2}^{3}x_{3}^{4}x_{4}^{2}x_{6}^{7}x_{7}^{7}x_{8}^{7}x_{9}^{7}x_{10}^{7}$ \end{tabular} & \begin{tabular}{@{}l@{}} $12r + 12$ \end{tabular}\\ \hline \\ 
$0$ & $(8, 2, 0)$ & \begin{tabular}{@{}l@{}} $(1, 0, 0, 0, 1, 0, 0, 0, 0, 0)$ \end{tabular} & \begin{tabular}{@{}l@{}} 42 \end{tabular} & \begin{tabular}{@{}l@{}} $x_{2}^{3}x_{3}^{4}x_{4}^{4}x_{5}x_{6}^{5}x_{7}^{5}x_{8}^{7}x_{9}^{7}x_{10}^{6}$ \end{tabular} & \begin{tabular}{@{}l@{}} $5$ \end{tabular}\\ \hline \\ 
$0$ & $(9, 1, 0)$ & \begin{tabular}{@{}l@{}} $(0, 0, 0, 1, 0, 0, 0, 0, 0, 0)$ \end{tabular} & \begin{tabular}{@{}l@{}} 66 \end{tabular} & \begin{tabular}{@{}l@{}} $x_{1}^{5}x_{2}^{7}x_{3}^{7}x_{5}^{8}x_{6}^{8}x_{7}^{7}x_{8}^{8}x_{9}^{8}x_{10}^{8}$ \end{tabular} & \begin{tabular}{@{}l@{}} $2^2 \cdot 5^2$ \end{tabular}\\ \hline \\ 
$1$ & $(3, 5, 2)$ & \begin{tabular}{@{}l@{}} $(0, 1, 1, 0, 1, 1, 1, 2, 0, 2)$ \end{tabular} & \begin{tabular}{@{}l@{}} 27 \end{tabular} & \begin{tabular}{@{}l@{}} $x_{1}x_{2}^{4}x_{3}^{4}x_{4}^{2}x_{5}^{4}x_{6}^{4}x_{7}^{4}x_{8}x_{9}^{2}x_{10}$ \end{tabular} & \begin{tabular}{@{}l@{}} $-3055r - 1872$ \end{tabular}\\ \hline \\ 
$1$ & $(2, 5, 3)$ & \begin{tabular}{@{}l@{}} $(0, 1, 1, 0, 1, 1, 1, 2, 2, 2)$ \end{tabular} & \begin{tabular}{@{}l@{}} 28 \end{tabular} & \begin{tabular}{@{}l@{}} $x_{1}x_{2}^{4}x_{3}^{4}x_{4}x_{5}^{4}x_{6}^{4}x_{7}^{4}x_{8}^{2}x_{9}^{2}x_{10}^{2}$ \end{tabular} & \begin{tabular}{@{}l@{}} $3993r + 2619$ \end{tabular}\\ \hline \\ 
$1$ & $(4, 5, 1)$ & \begin{tabular}{@{}l@{}} $(0, 1, 1, 0, 1, 1, 1, 0, 0, 2)$ \end{tabular} & \begin{tabular}{@{}l@{}} 31 \end{tabular} & \begin{tabular}{@{}l@{}} $x_{1}^{2}x_{2}^{4}x_{3}^{4}x_{4}^{3}x_{5}^{4}x_{6}^{4}x_{7}^{4}x_{8}^{3}x_{9}^{3}$ \end{tabular} & \begin{tabular}{@{}l@{}} $3130r + 21$ \end{tabular}\\ \hline \\ 
$1$ & $(1, 5, 4)$ & \begin{tabular}{@{}l@{}} $(1, 2, 2, 1, 2, 2, 1, 1, 1, 0)$ \end{tabular} & \begin{tabular}{@{}l@{}} 32 \end{tabular} & \begin{tabular}{@{}l@{}} $x_{1}^{4}x_{2}^{3}x_{3}^{3}x_{4}^{4}x_{5}^{3}x_{6}^{3}x_{7}^{4}x_{8}^{4}x_{9}^{4}$ \end{tabular} & \begin{tabular}{@{}l@{}} $-4043r - 1625$ \end{tabular}\\ \hline \\ 
$1$ & $(0, 5, 5)$ & \begin{tabular}{@{}l@{}} $(1, 2, 1, 2, 1, 1, 1, 2, 2, 2)$ \end{tabular} & \begin{tabular}{@{}l@{}} 38 \end{tabular} & \begin{tabular}{@{}l@{}} $x_{1}^{3}x_{2}^{3}x_{3}^{4}x_{4}^{4}x_{5}^{4}x_{6}^{4}x_{7}^{4}x_{8}^{4}x_{9}^{4}x_{10}^{4}$ \end{tabular} & \begin{tabular}{@{}l@{}} $7128r - 12852$ \end{tabular}\\ \hline \\ 
$1$ & $(2, 6, 2)$ & \begin{tabular}{@{}l@{}} $(0, 1, 0, 1, 1, 2, 2, 1, 1, 1)$ \end{tabular} & \begin{tabular}{@{}l@{}} 30 \end{tabular} & \begin{tabular}{@{}l@{}} $x_{1}x_{2}^{3}x_{3}x_{4}^{3}x_{5}^{5}x_{6}x_{7}x_{8}^{5}x_{9}^{5}x_{10}^{5}$ \end{tabular} & \begin{tabular}{@{}l@{}} $-424r - 595$ \end{tabular}\\ \hline \\ 
$1$ & $(3, 6, 1)$ & \begin{tabular}{@{}l@{}} $(0, 1, 0, 1, 1, 0, 2, 1, 1, 1)$ \end{tabular} & \begin{tabular}{@{}l@{}} 36 \end{tabular} & \begin{tabular}{@{}l@{}} $x_{1}^{2}x_{2}^{5}x_{3}^{2}x_{4}^{5}x_{5}^{5}x_{6}^{2}x_{8}^{5}x_{9}^{5}x_{10}^{5}$ \end{tabular} & \begin{tabular}{@{}l@{}} $-566r + 1653$ \end{tabular}\\ \hline \\ 
$1$ & $(1, 6, 3)$ & \begin{tabular}{@{}l@{}} $(1, 0, 1, 1, 2, 2, 2, 1, 1, 1)$ \end{tabular} & \begin{tabular}{@{}l@{}} 34 \end{tabular} & \begin{tabular}{@{}l@{}} $x_{1}^{4}x_{3}^{4}x_{4}^{5}x_{5}^{2}x_{6}^{2}x_{7}^{2}x_{8}^{5}x_{9}^{5}x_{10}^{5}$ \end{tabular} & \begin{tabular}{@{}l@{}} $61364r + 25951$ \end{tabular}\\ \hline \\ 
$1$ & $(4, 6, 0)$ & \begin{tabular}{@{}l@{}} $(0, 1, 1, 0, 1, 1, 1, 1, 0, 0)$ \end{tabular} & \begin{tabular}{@{}l@{}} 34 \end{tabular} & \begin{tabular}{@{}l@{}} $x_{1}^{3}x_{2}^{3}x_{3}^{3}x_{4}^{3}x_{5}^{3}x_{6}^{3}x_{7}^{5}x_{8}^{5}x_{9}^{3}x_{10}^{3}$ \end{tabular} & \begin{tabular}{@{}l@{}} $-1272r + 653$ \end{tabular}\\ \hline \\ 
$1$ & $(0, 6, 4)$ & \begin{tabular}{@{}l@{}} $(2, 1, 1, 2, 1, 1, 1, 1, 2, 2)$ \end{tabular} & \begin{tabular}{@{}l@{}} 37 \end{tabular} & \begin{tabular}{@{}l@{}} $x_{1}^{3}x_{2}^{3}x_{3}^{3}x_{4}^{3}x_{5}^{4}x_{6}^{5}x_{7}^{5}x_{8}^{5}x_{9}^{3}x_{10}^{3}$ \end{tabular} & \begin{tabular}{@{}l@{}} $-5321r - 5949$ \end{tabular}\\ \hline \\ 
$1$ & $(2, 7, 1)$ & \begin{tabular}{@{}l@{}} $(0, 1, 1, 0, 1, 1, 1, 1, 1, 2)$ \end{tabular} & \begin{tabular}{@{}l@{}} 35 \end{tabular} & \begin{tabular}{@{}l@{}} $x_{1}x_{2}^{3}x_{3}^{3}x_{4}x_{5}^{3}x_{6}^{6}x_{7}^{6}x_{8}^{6}x_{9}^{6}$ \end{tabular} & \begin{tabular}{@{}l@{}} $-291r + 1$ \end{tabular}\\ \hline \\ 
$1$ & $(1, 7, 2)$ & \begin{tabular}{@{}l@{}} $(1, 0, 1, 2, 1, 1, 1, 1, 1, 2)$ \end{tabular} & \begin{tabular}{@{}l@{}} 40 \end{tabular} & \begin{tabular}{@{}l@{}} $x_{1}^{4}x_{3}^{4}x_{4}x_{5}^{6}x_{6}^{6}x_{7}^{6}x_{8}^{6}x_{9}^{6}x_{10}$ \end{tabular} & \begin{tabular}{@{}l@{}} $11711r + 1287$ \end{tabular}\\ \hline \\ 
$1$ & $(3, 7, 0)$ & \begin{tabular}{@{}l@{}} $(0, 1, 1, 0, 1, 1, 1, 1, 1, 0)$ \end{tabular} & \begin{tabular}{@{}l@{}} 36 \end{tabular} & \begin{tabular}{@{}l@{}} $x_{1}^{2}x_{2}^{3}x_{3}^{3}x_{4}^{2}x_{5}^{3}x_{6}^{3}x_{7}^{6}x_{8}^{6}x_{9}^{6}x_{10}^{2}$ \end{tabular} & \begin{tabular}{@{}l@{}} $5839r + 1451$ \end{tabular}\\ \hline \\ 
$1$ & $(0, 7, 3)$ & \begin{tabular}{@{}l@{}} $(2, 1, 1, 2, 1, 1, 1, 1, 1, 2)$ \end{tabular} & \begin{tabular}{@{}l@{}} 40 \end{tabular} & \begin{tabular}{@{}l@{}} $x_{1}^{2}x_{2}^{3}x_{3}^{3}x_{4}^{2}x_{5}^{4}x_{6}^{6}x_{7}^{6}x_{8}^{6}x_{9}^{6}x_{10}^{2}$ \end{tabular} & \begin{tabular}{@{}l@{}} $-1365r - 1577$ \end{tabular}\\ \hline \\ 
$1$ & $(1, 8, 1)$ & \begin{tabular}{@{}l@{}} $(1, 0, 1, 1, 1, 1, 1, 1, 1, 2)$ \end{tabular} & \begin{tabular}{@{}l@{}} 43 \end{tabular} & \begin{tabular}{@{}l@{}} $x_{1}^{4}x_{3}^{4}x_{4}^{4}x_{5}^{4}x_{6}^{6}x_{7}^{7}x_{8}^{7}x_{9}^{7}$ \end{tabular} & \begin{tabular}{@{}l@{}} $239r - 1811$ \end{tabular}\\ \hline \\ 
$1$ & $(2, 8, 0)$ & \begin{tabular}{@{}l@{}} $(1, 0, 1, 1, 1, 1, 1, 1, 1, 0)$ \end{tabular} & \begin{tabular}{@{}l@{}} 43 \end{tabular} & \begin{tabular}{@{}l@{}} $x_{1}^{4}x_{2}x_{3}^{4}x_{4}^{4}x_{5}^{4}x_{6}^{4}x_{7}^{7}x_{8}^{7}x_{9}^{7}x_{10}$ \end{tabular} & \begin{tabular}{@{}l@{}} $-10919r - 9004$ \end{tabular}\\ \hline \\ 
$1$ & $(0, 8, 2)$ & \begin{tabular}{@{}l@{}} $(1, 2, 1, 1, 1, 1, 1, 1, 1, 2)$ \end{tabular} & \begin{tabular}{@{}l@{}} 47 \end{tabular} & \begin{tabular}{@{}l@{}} $x_{1}^{4}x_{2}x_{3}^{4}x_{4}^{4}x_{5}^{5}x_{6}^{7}x_{7}^{7}x_{8}^{7}x_{9}^{7}x_{10}$ \end{tabular} & \begin{tabular}{@{}l@{}} $5278r + 39848$ \end{tabular}\\ \hline \\ 
$1$ & $(1, 9, 0)$ & \begin{tabular}{@{}l@{}} $(1, 0, 1, 1, 1, 1, 1, 1, 1, 1)$ \end{tabular} & \begin{tabular}{@{}l@{}} 51 \end{tabular} & \begin{tabular}{@{}l@{}} $x_{1}^{4}x_{3}^{4}x_{4}^{4}x_{5}^{5}x_{6}^{5}x_{7}^{5}x_{8}^{8}x_{9}^{8}x_{10}^{8}$ \end{tabular} & \begin{tabular}{@{}l@{}} $-12212r - 8422$ \end{tabular}\\ \hline \\ 
$1$ & $(0, 9, 1)$ & \begin{tabular}{@{}l@{}} $(1, 2, 1, 1, 1, 1, 1, 1, 1, 1)$ \end{tabular} & \begin{tabular}{@{}l@{}} 54 \end{tabular} & \begin{tabular}{@{}l@{}} $x_{1}^{4}x_{3}^{4}x_{4}^{4}x_{5}^{6}x_{6}^{6}x_{7}^{6}x_{8}^{8}x_{9}^{8}x_{10}^{8}$ \end{tabular} & \begin{tabular}{@{}l@{}} $-82561r - 26061$ \end{tabular}\\ \hline \\ 
$1$ & $(0, 10, 0)$ & \begin{tabular}{@{}l@{}} $(1, 1, 1, 1, 1, 1, 1, 1, 1, 1)$ \end{tabular} & \begin{tabular}{@{}l@{}} 65 \end{tabular} & \begin{tabular}{@{}l@{}} $x_{1}^{2}x_{2}^{4}x_{3}^{4}x_{4}^{4}x_{5}^{6}x_{6}^{9}x_{7}^{9}x_{8}^{9}x_{9}^{9}x_{10}^{9}$ \end{tabular} & \begin{tabular}{@{}l@{}} $7028r + 8686$ \end{tabular}\\ \hline \\ 
$2$ & $(3, 2, 5)$ & \begin{tabular}{@{}l@{}} $(0, 2, 2, 0, 2, 2, 2, 1, 0, 1)$ \end{tabular} & \begin{tabular}{@{}l@{}} 27 \end{tabular} & \begin{tabular}{@{}l@{}} $x_{1}x_{2}^{4}x_{3}^{4}x_{4}^{2}x_{5}^{4}x_{6}^{4}x_{7}^{4}x_{8}x_{9}^{2}x_{10}$ \end{tabular} & \begin{tabular}{@{}l@{}} $3055r + 1183$ \end{tabular}\\ \hline \\ 
$2$ & $(2, 3, 5)$ & \begin{tabular}{@{}l@{}} $(0, 2, 2, 0, 2, 2, 2, 1, 1, 1)$ \end{tabular} & \begin{tabular}{@{}l@{}} 28 \end{tabular} & \begin{tabular}{@{}l@{}} $x_{1}x_{2}^{4}x_{3}^{4}x_{4}x_{5}^{4}x_{6}^{4}x_{7}^{4}x_{8}^{2}x_{9}^{2}x_{10}^{2}$ \end{tabular} & \begin{tabular}{@{}l@{}} $-3993r - 1374$ \end{tabular}\\ \hline \\ 
$2$ & $(4, 1, 5)$ & \begin{tabular}{@{}l@{}} $(0, 2, 2, 0, 2, 2, 2, 0, 0, 1)$ \end{tabular} & \begin{tabular}{@{}l@{}} 31 \end{tabular} & \begin{tabular}{@{}l@{}} $x_{1}^{2}x_{2}^{4}x_{3}^{4}x_{4}^{3}x_{5}^{4}x_{6}^{4}x_{7}^{4}x_{8}^{3}x_{9}^{3}$ \end{tabular} & \begin{tabular}{@{}l@{}} $-3130r - 3109$ \end{tabular}\\ \hline \\ 
$2$ & $(1, 4, 5)$ & \begin{tabular}{@{}l@{}} $(2, 1, 1, 2, 1, 1, 2, 2, 2, 0)$ \end{tabular} & \begin{tabular}{@{}l@{}} 32 \end{tabular} & \begin{tabular}{@{}l@{}} $x_{1}^{4}x_{2}^{3}x_{3}^{3}x_{4}^{4}x_{5}^{3}x_{6}^{3}x_{7}^{4}x_{8}^{4}x_{9}^{4}$ \end{tabular} & \begin{tabular}{@{}l@{}} $4043r + 2418$ \end{tabular}\\ \hline \\ 
$2$ & $(5, 0, 5)$ & \begin{tabular}{@{}l@{}} $(0, 2, 0, 2, 0, 0, 0, 2, 2, 2)$ \end{tabular} & \begin{tabular}{@{}l@{}} 33 \end{tabular} & \begin{tabular}{@{}l@{}} $x_{1}^{2}x_{2}^{2}x_{3}^{2}x_{4}^{3}x_{5}^{4}x_{6}^{4}x_{7}^{4}x_{8}^{4}x_{9}^{4}x_{10}^{4}$ \end{tabular} & \begin{tabular}{@{}l@{}} $-124r + 170$ \end{tabular}\\ \hline \\ 
$2$ & $(2, 2, 6)$ & \begin{tabular}{@{}l@{}} $(0, 2, 0, 2, 2, 1, 1, 2, 2, 2)$ \end{tabular} & \begin{tabular}{@{}l@{}} 30 \end{tabular} & \begin{tabular}{@{}l@{}} $x_{1}x_{2}^{3}x_{3}x_{4}^{3}x_{5}^{5}x_{6}x_{7}x_{8}^{5}x_{9}^{5}x_{10}^{5}$ \end{tabular} & \begin{tabular}{@{}l@{}} $424r - 171$ \end{tabular}\\ \hline \\ 
$2$ & $(3, 1, 6)$ & \begin{tabular}{@{}l@{}} $(0, 2, 0, 2, 2, 0, 1, 2, 2, 2)$ \end{tabular} & \begin{tabular}{@{}l@{}} 36 \end{tabular} & \begin{tabular}{@{}l@{}} $x_{1}^{2}x_{2}^{5}x_{3}^{2}x_{4}^{5}x_{5}^{5}x_{6}^{2}x_{8}^{5}x_{9}^{5}x_{10}^{5}$ \end{tabular} & \begin{tabular}{@{}l@{}} $566r + 2219$ \end{tabular}\\ \hline \\ 
$2$ & $(1, 3, 6)$ & \begin{tabular}{@{}l@{}} $(2, 0, 2, 2, 1, 1, 1, 2, 2, 2)$ \end{tabular} & \begin{tabular}{@{}l@{}} 34 \end{tabular} & \begin{tabular}{@{}l@{}} $x_{1}^{4}x_{3}^{4}x_{4}^{5}x_{5}^{2}x_{6}^{2}x_{7}^{2}x_{8}^{5}x_{9}^{5}x_{10}^{5}$ \end{tabular} & \begin{tabular}{@{}l@{}} $-61364r - 35413$ \end{tabular}\\ \hline \\ 
$2$ & $(4, 0, 6)$ & \begin{tabular}{@{}l@{}} $(0, 2, 2, 0, 2, 2, 2, 2, 0, 0)$ \end{tabular} & \begin{tabular}{@{}l@{}} 34 \end{tabular} & \begin{tabular}{@{}l@{}} $x_{1}^{3}x_{2}^{3}x_{3}^{3}x_{4}^{3}x_{5}^{3}x_{6}^{3}x_{7}^{5}x_{8}^{5}x_{9}^{3}x_{10}^{3}$ \end{tabular} & \begin{tabular}{@{}l@{}} $1272r + 1925$ \end{tabular}\\ \hline \\ 
$2$ & $(0, 4, 6)$ & \begin{tabular}{@{}l@{}} $(1, 2, 2, 1, 2, 2, 2, 2, 1, 1)$ \end{tabular} & \begin{tabular}{@{}l@{}} 37 \end{tabular} & \begin{tabular}{@{}l@{}} $x_{1}^{3}x_{2}^{3}x_{3}^{3}x_{4}^{3}x_{5}^{4}x_{6}^{5}x_{7}^{5}x_{8}^{5}x_{9}^{3}x_{10}^{3}$ \end{tabular} & \begin{tabular}{@{}l@{}} $5321r - 628$ \end{tabular}\\ \hline \\ 
$2$ & $(2, 1, 7)$ & \begin{tabular}{@{}l@{}} $(0, 2, 2, 0, 2, 2, 2, 2, 2, 1)$ \end{tabular} & \begin{tabular}{@{}l@{}} 35 \end{tabular} & \begin{tabular}{@{}l@{}} $x_{1}x_{2}^{3}x_{3}^{3}x_{4}x_{5}^{3}x_{6}^{6}x_{7}^{6}x_{8}^{6}x_{9}^{6}$ \end{tabular} & \begin{tabular}{@{}l@{}} $291r + 29$ \end{tabular}\\ \hline \\ 
$2$ & $(1, 2, 7)$ & \begin{tabular}{@{}l@{}} $(2, 0, 2, 1, 2, 2, 2, 2, 2, 1)$ \end{tabular} & \begin{tabular}{@{}l@{}} 40 \end{tabular} & \begin{tabular}{@{}l@{}} $x_{1}^{4}x_{3}^{4}x_{4}x_{5}^{6}x_{6}^{6}x_{7}^{6}x_{8}^{6}x_{9}^{6}x_{10}$ \end{tabular} & \begin{tabular}{@{}l@{}} $-11711r - 10424$ \end{tabular}\\ \hline \\ 
$2$ & $(3, 0, 7)$ & \begin{tabular}{@{}l@{}} $(0, 2, 2, 0, 2, 2, 2, 2, 2, 0)$ \end{tabular} & \begin{tabular}{@{}l@{}} 36 \end{tabular} & \begin{tabular}{@{}l@{}} $x_{1}^{2}x_{2}^{3}x_{3}^{3}x_{4}^{2}x_{5}^{3}x_{6}^{3}x_{7}^{6}x_{8}^{6}x_{9}^{6}x_{10}^{2}$ \end{tabular} & \begin{tabular}{@{}l@{}} $-5839r - 4388$ \end{tabular}\\ \hline \\ 
$2$ & $(0, 3, 7)$ & \begin{tabular}{@{}l@{}} $(1, 2, 2, 1, 2, 2, 2, 2, 2, 1)$ \end{tabular} & \begin{tabular}{@{}l@{}} 40 \end{tabular} & \begin{tabular}{@{}l@{}} $x_{1}^{2}x_{2}^{3}x_{3}^{3}x_{4}^{2}x_{5}^{4}x_{6}^{6}x_{7}^{6}x_{8}^{6}x_{9}^{6}x_{10}^{2}$ \end{tabular} & \begin{tabular}{@{}l@{}} $1365r - 212$ \end{tabular}\\ \hline \\ 
$2$ & $(1, 1, 8)$ & \begin{tabular}{@{}l@{}} $(2, 0, 2, 2, 2, 2, 2, 2, 2, 1)$ \end{tabular} & \begin{tabular}{@{}l@{}} 43 \end{tabular} & \begin{tabular}{@{}l@{}} $x_{1}^{4}x_{3}^{4}x_{4}^{4}x_{5}^{4}x_{6}^{6}x_{7}^{7}x_{8}^{7}x_{9}^{7}$ \end{tabular} & \begin{tabular}{@{}l@{}} $-239r - 2050$ \end{tabular}\\ \hline \\ 
$2$ & $(2, 0, 8)$ & \begin{tabular}{@{}l@{}} $(2, 0, 2, 2, 2, 2, 2, 2, 2, 0)$ \end{tabular} & \begin{tabular}{@{}l@{}} 43 \end{tabular} & \begin{tabular}{@{}l@{}} $x_{1}^{4}x_{2}x_{3}^{4}x_{4}^{4}x_{5}^{4}x_{6}^{4}x_{7}^{7}x_{8}^{7}x_{9}^{7}x_{10}$ \end{tabular} & \begin{tabular}{@{}l@{}} $10919r + 1915$ \end{tabular}\\ \hline \\ 
$2$ & $(0, 2, 8)$ & \begin{tabular}{@{}l@{}} $(2, 1, 2, 2, 2, 2, 2, 2, 2, 1)$ \end{tabular} & \begin{tabular}{@{}l@{}} 47 \end{tabular} & \begin{tabular}{@{}l@{}} $x_{1}^{4}x_{2}x_{3}^{4}x_{4}^{4}x_{5}^{5}x_{6}^{7}x_{7}^{7}x_{8}^{7}x_{9}^{7}x_{10}$ \end{tabular} & \begin{tabular}{@{}l@{}} $-5278r + 34570$ \end{tabular}\\ \hline \\ 
$2$ & $(1, 0, 9)$ & \begin{tabular}{@{}l@{}} $(2, 0, 2, 2, 2, 2, 2, 2, 2, 2)$ \end{tabular} & \begin{tabular}{@{}l@{}} 51 \end{tabular} & \begin{tabular}{@{}l@{}} $x_{1}^{4}x_{3}^{4}x_{4}^{4}x_{5}^{5}x_{6}^{5}x_{7}^{5}x_{8}^{8}x_{9}^{8}x_{10}^{8}$ \end{tabular} & \begin{tabular}{@{}l@{}} $12212r + 3790$ \end{tabular}\\ \hline \\ 
$2$ & $(0, 1, 9)$ & \begin{tabular}{@{}l@{}} $(2, 1, 2, 2, 2, 2, 2, 2, 2, 2)$ \end{tabular} & \begin{tabular}{@{}l@{}} 54 \end{tabular} & \begin{tabular}{@{}l@{}} $x_{1}^{4}x_{3}^{4}x_{4}^{4}x_{5}^{6}x_{6}^{6}x_{7}^{6}x_{8}^{8}x_{9}^{8}x_{10}^{8}$ \end{tabular} & \begin{tabular}{@{}l@{}} $82561r + 56500$ \end{tabular}\\ \hline \\ 
$2$ & $(0, 0, 10)$ & \begin{tabular}{@{}l@{}} $(2, 2, 2, 2, 2, 2, 2, 2, 2, 2)$ \end{tabular} & \begin{tabular}{@{}l@{}} 65 \end{tabular} & \begin{tabular}{@{}l@{}} $x_{1}^{2}x_{2}^{4}x_{3}^{4}x_{4}^{4}x_{5}^{6}x_{6}^{9}x_{7}^{9}x_{8}^{9}x_{9}^{9}x_{10}^{9}$ \end{tabular} & \begin{tabular}{@{}l@{}} $-7028r + 1658$ \end{tabular}\\ \hline \\ 
\end{longtable}

\normalsize

\end{document}